\def\ccite#1{~\cite{#1}}
\def\longformule#1#2{
\displaylines{ \qquad{#1} \hfill\cr \hfill {#2} \qquad\cr } }
\def\inte#1{
\displaystyle\mathop{#1\kern0pt}^\circ }
\def\orr{\omega^r}
\def\ot{\omega^\theta}
\def\ur{u^r}
\def\ut{u^\theta}
\let\e=\varepsilon
\let\d=\partial
\let\pa=\partial
\let\wt=\widetilde
\def\cF{{\mathcal F}}
\def\la{\lambda}
\let\f=\frac
\let\D=\Delta
\renewcommand{\div}{{\rm div}\,}
\newcommand{\rmnum}[1]{\romannumeral #1}
\newcommand{\Rmnum}[1]{\uppercase\expandafter{\romannumeral #1} }
 \numberwithin{equation}{section}
\let\s=\sigma
\let\th=\theta
\def\virgp{\raise 2pt\hbox{,}}
\def\cdotpv{\raise 2pt\hbox{;}}
\def\eqdefa{\buildrel\hbox{\footnotesize def}\over =}
\def\C{\mathop{\mathbb C\kern 0pt}\nolimits}
\def\DD{\mathop{\mathbb D\kern 0pt}\nolimits}
\def\EE{\mathop{{\mathbb E \kern 0pt}}\nolimits}
\def\K{\mathop{\mathbb K\kern 0pt}\nolimits}
\def\N{\mathop{\mathbb N\kern 0pt}\nolimits}
\def\Q{\mathop{\mathbb Q\kern 0pt}\nolimits}
\def\R{\mathop{\mathbb R\kern 0pt}\nolimits}
\def\SS{\mathop{\mathbb S\kern 0pt}\nolimits}
\def\ZZ{\mathop{\mathbb Z\kern 0pt}\nolimits}
\def\TT{\mathop{\mathbb T\kern 0pt}\nolimits}
\def\PP{\mathop{\mathbb P\kern 0pt}\nolimits}
\def\dive{\mathop{\rm div}\nolimits}
\def\curl{\mathop{\rm curl}\nolimits}
\def\no{\noindent}
\def\na{\nabla}
\def\p3{\partial_3}
\def\u3{u^3}
\def\b3{b^3}
\def\om3{\omega^3}
\newcommand{\beq}{\begin{equation}}
\newcommand{\eeq}{\end{equation}}
\newcommand{\ben}{\begin{eqnarray}}
\newcommand{\een}{\end{eqnarray}}
\newcommand{\beno}{\begin{eqnarray*}}
\newcommand{\eeno}{\end{eqnarray*}}
\newcommand{\andf}{\quad\hbox{and}\quad}
\newcommand{\with}{\quad\hbox{with}\quad}
\newtheorem{thm}{Theorem}[section]
\newtheorem{lem}{Lemma}[section]
\newtheorem{rmk}{Remark}[section]
\newtheorem{cor}{Corollary}[section]
\newtheorem{prop}{Proposition}[section]
\begin{document}
\title[Global solutions to 3-D axi-symmetric Navier-Stokes system]
{On the global well-posedness of 3-D axi-symmetric Navier-Stokes system with small swirl component }
\author[Y. Liu]{Yanlin Liu}
\address[Y. Liu]{department of mathematical sciences, university of science and technology of china, hefei 230026, china,
and Academy of Mathematics $\&$ Systems Science, Chinese Academy of
Sciences, Beijing 100190, CHINA.} \email{liuyanlin3.14@126.com}
\author[P. Zhang]{Ping Zhang} \address[P. Zhang]{Academy of Mathematics $\&$ Systems Science
and  Hua Loo-Keng Key Laboratory of Mathematics, Chinese Academy of
Sciences, Beijing 100190, CHINA, and School of Mathematical Sciences, University of Chinese Academy of Sciences, Beijing 100049, China.} \email{zp@amss.ac.cn}

\date{\today}

\maketitle

\begin{abstract} In this paper, we prove the local well-posedness of 3-D axi-symmetric Navier-Stokes system with initial data in
the critical Lebesgue spaces. We also obtain the global well-posedness result with small initial data. Furthermore, with the initial swirl
component of the velocity being sufficiently small in the almost critical spaces, we can still prove the global well-posedness of the system.
\end{abstract}

Keywords: Axi-symmetric Navier-Stokes system, critical spaces, mild solution.


\section{Introduction}\label{sec1}
In this paper, we investigate the well-posedness and long-time  behavior
of global solutions to 3D axisymmetric  Navier-Stokes equations with a small swirl component.
In general, 3-D  Navier-Stokes system  in $\R^3$ reads
\begin{equation}\label{1.1}
\left\{
\begin{array}{l}
\displaystyle \d_t u+u\cdot\nabla u-\Delta u+\nabla p=0,   \qquad(t,x)\in\R^+\times\R^3\\
\displaystyle \div u=0,\\
\displaystyle u|_{t=0} =u_0,
\end{array}
\right.
\end{equation}
where $u(t,x)=(u^1,u^2,u^3)$  stands for the velocity field and $p$
the scalar pressure function of the fluid, which guarantees the divergence free condition of the velocity field.
 This system describes the motion of viscous incompressible fluid flows.

\smallbreak

We  recall that except the initial data with special structure,
it is not known whether or not the system \eqref{1.1} has a
unique global smooth solution with large smooth initial data. For
instance,  the system \eqref{1.1}  is globally well-posed for data
which is axisymmetric and without swirl component (that is the case when
$u^\th=0$ in \eqref{1.2} below). In this case, Ladyzhenskaya \cite{La}  and independently Ukhovskii and Yudovich
\cite{UY}  proved the
existence of weak solutions along with the uniqueness and regularities of such solution for \eqref{1.1}.
Leonardi, M\'{a}lek, Ne$\breve{c}$as and Pokorny \cite{LMNP} gave a
refined proof of the same result in \cite{ La, UY}. And even with a small swirl component, the authors \cite{ZZT2} could also
establish the global well-posedess of \eqref{1.1}. In general, even the
global wellposedness of \eqref{1.1} with axisymmetric initial data
is still open.

\smallbreak

On the other hand, in the seminal paper \ccite{Leray},    Leray proved the global existence
  of finite energy weak solutions to \eqref{1.1}. Yet the uniqueness and regularity to
this weak solution are big open questions in the field of
mathematical fluid mechanics. Furthermore, Leray emphasized two facts about Navier-Stokes system.
Firstly, he pointed out that energy estimate method is very important to study navier-Stokes system.
The general energy inequality for \eqref{1.1}
\beno \frac
12 \|v(t)\|_{L^2}^2  +\int_0^t\|\nabla v(t')\|_{L^2}^2dt' = \frac 12
\|v_0\|_{L^2}^2, \eeno
is the cornerstone of the proof
to the existence of global turbulent solution to \eqref{1.1}  in\ccite{Leray}.  The energy estimate  relies (formally) on
the fact that if~$v$ is a divergence free vector field,~$ (v\cdot
\nabla f|f)_{L^2} =0~$ and that~$ (\nabla p|v)_{L^2}=0$. In the
present work, we shall use the more general fact that for any
divergence free vector field~$v$ and any function~$a$, we have
$$
\int_{\R^3} v(x) \cdot \nabla a(x) |a(x)|^{p-2}a(x)\, dx =
0\qquad\mbox{for any}\ \ p\in ]1,\infty[.
$$
This will lead to the ~$L^p$ type energy  estimate. Secondly Leray pointed out that
the scaling invariance of \eqref{1.1}, that is,
\beq\label{scale}
v(t,x)\mapsto \la v(\la^2 t, \la x) \andf p(t,x)\mapsto \la^2 p((\la^2 t, \la x),
\eeq
if~$(v,p)$ is a
solution of \eqref{1.1} on~$[0,T]\times \R^3$ associated with an initial
data~$v_0$, then~$(v_\la,p_\la)$ is also a solution of
\eqref{1.1} on~$[0,\lambda^{-2}T]\times \R^3$ associated with the initial
data~$\lambda v_0(\lambda x),$  is another important fact in the study of Navier-Stokes system. The
scaling property is also the foundation of the Kato theory  which
gives a general method to solve (locally or globally) the
incompressible Navier-Stokes equation  in critical spaces i.e.
spaces with the norms of which are invariant under the scaling. In
what follows, we shall use  such scaling invariant space as $L^\infty(]0,t[; L^1(\Omega)),$ where the norm $L^1(\Omega)$ is given by
\eqref{lpo}.

In fact, Gally and $\breve{S}$ver\'ak \cite{GS} recently proved the global well-posedness of 3-D axisymmetric Navier-Stokes system
without swirl and with initial data in the scaling invariant function spaces. We remark that the reason why one can prove the global well-posedness
of \eqref{1.1} in this case is due to the $\theta$ component of the vorticity, $\omega^\theta,$ satisfies
\beno
\pa_t \f{\ot}{r}+(u^r\pa_r+u^z\pa_z) \f{\ot}{r}-(\Delta+\frac 2r\pa_r)\f{\ot}{r}
=0.
\eeno
The scaling invariant Lebesgue space for $\frac{\ot}r$ is $L^\infty(]0,t[;L^1(\R^3)).$ Motivated by \cite{GS},
the purpose of this paper is to improve the norm for the initial data in \cite{ZZT2} to be scaling invariant ones.
We remark that the other motivation of this paper comes from \cite{CZ4} where the authors proved that one scaling invariant
norm to one component of Navier-Stokes system controls the regularity of the solution. Yet we still do not know in general
the global well-posedness of Naver-Stokes with one component being small in some scaling invariant space.

\smallbreak Now we restrict ourselves to the axisymmetric solutions
of  \eqref{1.1} with the following  form
$$u(t,x)=u^r(t,r,z)e_r+u^\theta(t,r,z)e_\theta+u^z(t,r,z)e_z,$$
where $(r,\theta,z)$ denotes the usual cylindrical coordinates in $\R^3$
so that $x=(r\cos\theta,r\sin\theta,z)$,  and
$$e_r=(\cos\theta,\sin\theta,0),\ e_\theta=(-\sin\theta,\cos\theta,0),\ e_z=(0,0,1),\ r=\sqrt{x_1^2+x_2^2}.$$
Then in this case, we can reformulate \eqref{1.1} as
\begin{equation}\label{1.2}
\left\{
\begin{array}{l}
\displaystyle \pa_t u^r+(u^r\pa_r+u^z\pa_z) u^r-(\pa_r^2+\pa_z^2+\frac 1r\pa_r-\frac{1}{r^2})u^r-\frac{(u^\theta)^2}{r}+\pa_r p=0,\\
\displaystyle \pa_t \ut+(u^r\pa_r+u^z\pa_z) \ut-(\pa_r^2+\pa_z^2+\frac 1r\pa_r-\frac{1}{r^2})u^\theta+\frac{u^r u^\theta}{r}=0,\\
\displaystyle \pa_t u^z+(u^r\pa_r+u^z\pa_z) u^z-(\pa_r^2+\pa_z^2+\frac 1r\pa_r)u^z+\pa_z p=0,\\
\displaystyle \pa_r u^r+\frac 1r u^r+\pa_z u^z=0,\\
\displaystyle u|_{t=0} =u_0.
\end{array}
\right.
\end{equation}

Let us denote $\widetilde{u}\eqdefa u^r e_r+u^z e_z.$ Then it is easy to check that
\beno
\dive \wt{u}=0 \andf \curl \wt{u} =\omega^\theta e_\theta,
\eeno so that the Biot-Savart law shows that $u^r$ and $u^z$ can be uniquely determined by $\ot$ (see Subsection \ref{sec2.1}).
Hence we can write
 the System \eqref{1.2} as
\beq\label{otut}
\left\{
\begin{array}{l}
\displaystyle \pa_t\ot-\bigl(\pa_r^2+\pa_z^2+\frac 1r \pa_r-\frac{1}{r^2}\bigr)\ot=-\div_*(\widetilde{u}\ot)+\f{2u^\theta\pa_zu^\theta}r, \\
\displaystyle \pa_t\ut-\bigl(\pa_r^2+\pa_z^2+\frac 1r \pa_r-\frac{1}{r^2}\bigr)\ut=-\div_*(\widetilde{u}\ut)-\frac{2\ut\ur}{r},\\
\displaystyle (\ot,\ut)|_{t=0} =(\ot_0, \ut_0).
\end{array}
\right.
\eeq
Here and all in that follows, we always denote $\div_* f\eqdefa\pa_r f^r+\pa_z f^z$ and abuse the notation $\tilde{u}=(u^r,u^z).$

\smallbreak As in \cite{GS}, we shall equip the half-plane $\Omega=\{(r,z)|r>0,z\in\R\}$
with the 2D measure $drdz$, instead of the 3D measure $rdrdz.$ For any $p\in[1,\infty[$, we denote by $L^p(\Omega)$
the space of measurable functions $f:\Omega\rightarrow\R$ which verifies
\beq \label{lpo} \|f\|_{L^p(\Omega)}\eqdefa\bigl(\int_\Omega|f(r,z)|^p drdz\bigr)^{\frac 1p}<\infty,\quad 1\leq p<\infty. \eeq
The space $L^\infty(\Omega)$ can be defined with the usual modification. Sometimes, we shall also use the 3D Lebesgue measure
$rdrdz$, and the corresponding Lebesgue spaces are then denoted by $L^p(\R^3)$ or $L^p$ with norm
$$\|g\|_{L^p}\eqdefa\bigl(\int_\Omega|f(r,z)|^p rdrdz\bigr)^{\frac 1p},\ 1\leq p<\infty.$$

Our main results state as follows.

\begin{thm}\label{thmlocal}
{\sl For any initial data $\ot_0\in L^1(\Omega)$ and $\ut_0\in L^2(\Omega)$
satisfying $r^{-\frac{3}{10}}\ut_0\in L^{\frac{20}{13}}(\Omega)$, there exists
some $T(\ot_0,\ut_0)$ such that
the equations \eqref{otut} have a unique mild solution
\begin{equation}\begin{split}\label{thmlocal1}
\ot\in C\bigl([0,T];&L^1(\Omega)\bigr)\bigcap C\bigl(]0,T];L^\infty(\Omega)\bigr),\
\ut\in C\bigl([0,T];L^2(\Omega)\bigr)\bigcap C\bigl(]0,T];L^\infty(\Omega)\bigr)\\
&\with\quad r^{-\frac{3}{10}}{\ut}\in C\bigl([0,T];L^{\frac{20}{13}}(\Omega)\bigr)\bigcap C\bigl(]0,T];L^\infty(\Omega)\bigr).
\end{split}\end{equation}
Furthermore, the solution $(\ot, \ut)$ verifies
\begin{itemize}
\item
for any $p\in[1,\infty],~q\in[2,\infty]$ and $\kappa\in[{20}/{13},\infty],$ there holds
\begin{equation}\begin{split}\label{LMN1}
L_p(T)\eqdefa\sup_{0\leq t\leq T}t^{1-\frac 1p}&\|\ot(t)\|_{L^p(\Omega)}<\infty,\quad
M_q(T)\eqdefa\sup_{0\leq t\leq T}t^{\frac12-\frac 1q}\|\ut(t)\|_{L^q(\Omega)}<\infty,\\
&N_\kappa(T)\eqdefa\sup_{0\leq t\leq T}t^{\frac{13}{20}-\frac 1{\kappa}}\|r^{-\frac{3}{10}}{\ut}(t)\|_{L^\kappa(\Omega)}<\infty.
\end{split}\end{equation}
Moreover, when $p\in]1,\infty],~q\in]2,\infty]$ and $\kappa\in]{20}/{13},\infty],$ we have
\begin{equation}\label{thmlocal3}
\lim_{t\rightarrow 0}\bigl( L_p(t)+M_q(t)+N_\kappa(t)\bigr)=0;
\end{equation}

\item
 if \beq\label{smallc}
  \|\ot_0\|_{L^1(\Omega)}+\|\ut_0\|_{L^2(\Omega)}
  +\|r^{-\frac{3}{10}}\ut_0\|_{L^{\frac{20}{13}}(\Omega)}\leq c \eeq
  for some sufficiently
small constant $c,$ then  $T=\infty.$
And if $\|\ut_0\|_{L^2(\Omega)}+\|r^{-\frac{3}{10}}\ut_0\|_{L^{\frac{20}{13}}(\Omega)}$ is small enough,
then the lifespan $T^\star$ of the solution depends only on $\ot_0$.

\end{itemize}
}
\end{thm}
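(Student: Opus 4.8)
\medskip

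\noindent\textbf{Outline of the proof.} The plan is to solve \eqref{otut} by a Kato-type fixed-point argument in a scaling-invariant space built around the semigroup $S(t)\eqdefa e^{tL}$ generated by the linear operator $L\eqdefa\pa_r^2+\pa_z^2+\frac1r\pa_r-\frac1{r^2}$ on $\Omega$ (with the $2$D measure $drdz$ and the natural realization vanishing on the axis $\{r=0\}$), which drives the diffusion of both $\ot$ and $\ut$. The first step is a careful study of $S(t)$: using the explicit Bessel-function kernel of $L$ --- equivalently, realizing $L$ as the Dirichlet Laplacian acting on axisymmetric fields of the form $f(r,z)e_\theta$, as in \cite{GS} --- one establishes the $L^p$--$L^q$ smoothing bounds $\|S(t)f\|_{L^q(\Omega)}\lesssim t^{-(\frac1p-\frac1q)}\|f\|_{L^p(\Omega)}$ and $\|\nabla_*S(t)f\|_{L^q(\Omega)}\lesssim t^{-\frac12-(\frac1p-\frac1q)}\|f\|_{L^p(\Omega)}$ for $1\le p\le q\le\infty$, together with weighted refinements that absorb factors of $r^{-1}$ by exploiting the fact that the kernel $K_t(r,z,r',z')$ vanishes linearly in $r$ and in $r'$ near the axis; typical instances are $\|r^{-\frac3{10}}S(t)f\|_{L^\kappa(\Omega)}\lesssim t^{-\gamma}\|r^{-\frac3{10}}f\|_{L^{\kappa'}(\Omega)}$ and $\bigl\|\nabla_*S(t)\bigl(\tfrac1r h\bigr)\bigr\|_{L^p(\Omega)}\lesssim t^{-\delta}\|r^{-\frac35}h\|_{L^m(\Omega)}$, all exponents being forced by the scaling \eqref{scale}. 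Alongside this one records the Biot--Savart estimates of Subsection \ref{sec2.1} for recovering $\wt u=(\ur,\uz)$ from $\ot$, namely $\|\wt u\|_{L^a(\Omega)}\lesssim\|\ot\|_{L^{a'}(\Omega)}$ with $\frac1a=\frac1{a'}-\frac12$, and the elliptic-type bound $\bigl\|\tfrac{\ur}{r}\bigr\|_{L^a(\Omega)}\lesssim\|\ot\|_{L^a(\Omega)}$.

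Next I would pass to the mild formulation of \eqref{otut}, using the identity $\frac{2\ut\pa_z\ut}{r}=\pa_z\bigl(\tfrac{(\ut)^2}{r}\bigr)$ to put the swirl forcing of the vorticity equation in divergence form:
\begin{equation*}
\begin{aligned}
\ot(t)&=S(t)\ot_0+\int_0^tS(t-s)\Bigl(-\div_*(\wt u\,\ot)+\pa_z\bigl(\tfrac{(\ut)^2}{r}\bigr)\Bigr)(s)\,ds,\\
\ut(t)&=S(t)\ut_0+\int_0^tS(t-s)\Bigl(-\div_*(\wt u\,\ut)-\tfrac{2\ut\ur}{r}\Bigr)(s)\,ds.
\end{aligned}
\end{equation*}
The solution is sought in the complete metric space $\X_T$ of pairs $(\ot,\ut)$ for which $L_p(T)$ is finite for a suitable finite family of exponents $p$, $M_q(T)$ for a family of $q$'s, and $N_\kappa(T)$ for a family of $\kappa$'s, equipped with the corresponding norm; the space $\X_T$ is scaling invariant, and by the smoothing bounds above together with a density argument the $\X_T$-norm of the linear evolutions $S(\cdot)\ot_0$, $S(\cdot)\ut_0$ tends to $0$ as $T\to0$ for the strictly subcritical exponents ($p>1$, $q>2$, $\kappa>\frac{20}{13}$), while staying $\lesssim$ the size of the data at the endpoint ones.

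The heart of the argument is then a family of bilinear estimates showing that the Duhamel terms map $\X_T\times\X_T$ continuously into $\X_T$, with norm $\lesssim\|\cdot\|_{\X_T}^2$ (times, on the subcritical components, a positive power of $T$). Each nonlinear term is handled by H\"older's inequality on $\Omega$ combined with the Biot--Savart and elliptic bounds and with the (weighted) smoothing bounds, the resulting $ds$-integral being convergent both as $s\to0$ and as $s\to t$ precisely because the exponents are scaling-consistent. The weighted norm $N_\kappa$ is what makes the $\frac1r$-singular terms close: for instance $\pa_z\bigl(\tfrac{(\ut)^2}{r}\bigr)$ is estimated through $\|r^{-\frac35}(\ut)^2\|_{L^m(\Omega)}\le\|r^{-\frac3{10}}\ut\|_{L^{2m}(\Omega)}^2$ after applying the weighted bound for $\nabla_*S(t-s)\bigl(\tfrac1r\,\cdot\bigr)$, and $\frac{2\ut\ur}{r}=2\,\tfrac{\ur}{r}\,\ut$ is estimated by pairing the elliptic bound for $\tfrac{\ur}{r}$ with the $M_q$ --- and, where the weight must be produced, the $N_\kappa$ --- norm of $\ut$; the precise constants $\tfrac3{10}$ and $\tfrac{20}{13}$ are the unique choice making every such H\"older split admissible, every weighted semigroup bound valid, and every Duhamel integral convergent. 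Once the bilinear estimates are available, a Picard iteration in $\X_T$ produces a unique fixed point for $T$ small (arbitrary data) or for all $T$ under \eqref{smallc}; the persistence $\ot\in C([0,T];L^1(\Omega))$, $\ut\in C([0,T];L^2(\Omega))$ and $r^{-\frac3{10}}\ut\in C([0,T];L^{\frac{20}{13}}(\Omega))$, as well as the limit \eqref{thmlocal3}, follow from the same estimates together with the strong continuity in time of $S(\cdot)$ on these spaces, while the instantaneous regularization into $C(]0,T];L^\infty(\Omega))$ comes from one further bootstrap of the smoothing bounds. For the last assertion, when $\|\ut_0\|_{L^2(\Omega)}+\|r^{-\frac3{10}}\ut_0\|_{L^{\frac{20}{13}}(\Omega)}$ is small a bootstrap on the (essentially linear, $\ot$-transported) equations obeyed by $\ut$ and $r^{-\frac3{10}}\ut$ shows that these quantities stay small on the whole interval of existence of $\ot$; hence $\ot$ then solves a small perturbation of the globally well-posed swirl-free axisymmetric vorticity equation, and its lifespan $T^\star$ is governed by $\ot_0$ alone.

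The step I expect to be the main obstacle is twofold: proving the weighted smoothing estimates for $S(t)$ that absorb the $\frac1r$-singularities --- which requires tracking the precise near-axis decay of the Bessel kernel rather than invoking soft semigroup theory --- and the exponent bookkeeping needed so that \emph{all} the bilinear estimates close \emph{simultaneously} within one and the same space $\X_T$; it is this second constraint that singles out the weight $r^{-\frac3{10}}$ and the integrability exponent $\frac{20}{13}$.
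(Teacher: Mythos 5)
Your proposal is correct in outline and follows essentially the same route as the paper: the weighted $L^p$--$L^q$ smoothing estimates for the semigroup $S(t)$ (the paper's Proposition~\ref{lemsemi}), the Biot--Savart bounds of Lemma~\ref{lem2.2.2}, the mild formulation with the swirl forcing put in divergence form, a Kato fixed point in a scaling-invariant space (the paper's $X_T$, built from exactly three norms $L_{4/3}, M_4, N_2$), and a bootstrap over exponents to obtain $L_p(T), M_q(T), N_\kappa(T)$ for the full ranges. The only point where you drift from the paper is the final assertion: the paper obtains ``$T^\star$ depends only on $\ot_0$'' directly from the structure of the fixed-point threshold (if the $\ut_0$ contributions to $EL(T)$ are small, the existence time is governed by $\sup_t t^{1/4}\|S(t)\ot_0\|_{L^{4/3}(\Omega)}$ alone), whereas your appeal to the globally well-posed swirl-free problem is both unnecessary and not quite what is being claimed --- the statement concerns the local lifespan bound, not global existence.
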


\begin{rmk}\label{rmk1.1}
\begin{itemize}
\item Let us remark that the norms $\|\ot_0\|_{L^1(\Omega)},~\|\ut_0\|_{L^2(\Omega)}$ and
$\|r^{-\frac{3}{10}}\ut_0\|_{L^{\frac{20}{13}}(\Omega)}$ are scaling invariant
under the scaling transformation \eqref{scale}. Moreover,
the method used here might be used to
study axi-symmetric vortex ring for 3-D Navier-Stokes system with swirl (see the corresponding result of
\cite{FS15} for the case without swirl).

\item The reason for requiring $r^{-\frac{3}{10}}\ut_0\in L^{\frac{20}{13}}(\Omega)$ is to handle the term
$\frac{\pa_z|\ut(s)|^2}{r}$ in $\ot$ equation of \eqref{inteeqt}, so that the exponent $\frac 32-\frac 1p+\frac15$ appearing in \eqref{fix6}
is less than $1.$
\end{itemize}
\end{rmk}

\begin{thm}\label{thmmain}
{\sl Let $\ot_0$ and $\ut_0$ satisfy $\eta_0\eqdefa\frac{\ot_0}{r}\in L^1,
~U_0\eqdefa\frac{\ut_0}{r}\in L^{\frac{3}{2}}$,
~$r\ut_0\in L^{A}\bigcap L^\infty$ for some finite $A$. We assume that
$\|r\ut_0\|_{L^\infty(\Omega)}$ is sufficiently
small,
then the system \eqref{otut} has a unique global solution which satisfies
\begin{equation}
\eta\eqdefa \f{\ot}{r}\in C\bigl([0,+\infty[;L^1\bigr) \andf U\eqdefa \f{\ut}r\in C\bigl([0,+\infty[;L^{\frac32}\bigr).
\end{equation}
}
\end{thm}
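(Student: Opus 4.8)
The plan is to upgrade the local solution provided by Theorem~\ref{thmlocal} to a global one by establishing a priori bounds that are uniform in time; the smallness of $\|r\ut_0\|_{L^\infty}$ will be used to treat the swirl as a perturbation of the (globally well posed) swirl-free dynamics. First I would recast the system in terms of $\eta=\ot/r$, $U=\ut/r$ and the \emph{modified swirl} $\Gamma=r\ut=r^2U$. A direct computation from \eqref{1.2}--\eqref{otut} gives
\beno
\d_t\eta+\wt u\cdot\na\eta-\bigl(\D+\tfrac2r\d_r\bigr)\eta=2U\d_zU,\qquad
\d_t U+\wt u\cdot\na U-\bigl(\D+\tfrac2r\d_r\bigr)U=-\tfrac{2\ur}{r}U,
\eeno
while $\Gamma$ solves the transport--diffusion equation $\d_t\Gamma+\wt u\cdot\na\Gamma-(\D-\tfrac2r\d_r)\Gamma=0$ in which no zeroth--order term survives; here $\wt u=(\ur,\uz)$ is divergence free and is recovered from $\eta$ by a Biot--Savart law which, because $\D+\tfrac2r\d_r$ is precisely the Laplacian of $\R^5$ acting on functions radial in four of the variables, gains one derivative in the $\R^5$ scaling.

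The first step is to control the swirl. Since the $\Gamma$--equation carries no zeroth--order term, testing it against $|\Gamma|^{p-2}\Gamma$ with respect to the three-dimensional measure $r\,dr\,dz$ annihilates the transport term (because $\dive u=0$) and, after one integration by parts using $\Gamma|_{r=0}=0$, leaves a dissipative diffusion term; hence $\|\Gamma(t)\|_{L^p}\le\|\Gamma_0\|_{L^p}$ for every $p\in[1,\infty]$ and all $t\ge0$. In particular $\|\Gamma(t)\|_{L^\infty}\le\|r\ut_0\|_{L^\infty}=:\e$ remains small for all time, and $\|\Gamma(t)\|_{L^A}\le\|r\ut_0\|_{L^A}$; parabolic regularity for the $\Gamma$--equation then bounds $\d_z\Gamma$ in the time-weighted Lebesgue norms dictated by the scaling, quantitatively in terms of $\|r\ut_0\|_{L^A\cap L^\infty}$.

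The remaining bounds for $\eta$ and $U$ are coupled and will be closed together by a continuation argument. For $\eta$ one uses that $\|\eta(t)\|_{L^1(\R^3)}=\|\ot(t)\|_{L^1(\Omega)}$ is almost monotone: as in the swirl-free case, testing the $\eta$--equation against $\mathrm{sgn}(\eta)$ with the weight $r\,dr\,dz$ kills the transport term and leaves a favourably signed diffusion contribution, so that
\beno
\|\eta(t)\|_{L^1(\R^3)}\le\|\eta_0\|_{L^1(\R^3)}+\int_0^t\bigl\|2U\d_zU(s)\bigr\|_{L^1(\R^3)}\,ds.
\eeno
Rewriting the forcing as $2U\d_zU=\d_z(U^2)=\d_z(\Gamma^2/r^4)=2\Gamma\,\d_z\Gamma/r^4$ and using $|\Gamma|\le\e$, this forcing is of size $O(\e)$, and by the $\Gamma$--bounds of the previous step together with Hardy-type inequalities (which absorb the weight $r^{-4}$ against $\d_z\Gamma=r^2\d_zU$) it is controlled by $C\e$ times norms of $U$ that the $U$--estimate will bound. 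For $U$ one puts the equation in Duhamel form and exploits the $L^p$--$L^q$ smoothing of $e^{t(\D+\frac2r\d_r)}$ to estimate the contributions of $\tfrac{2\ur}{r}U$---with $\ur/r$ bounded through the Biot--Savart law in terms of $\eta$---and of the transport term $\wt u\cdot\na U$; a Gronwall argument, run in scaling-invariant norms exactly as in the proof of Theorem~\ref{thmlocal}, then yields uniform bounds for $U$ in $C([0,+\infty[;L^{3/2}(\R^3))$ (and, feeding back, for $\eta$ in $C([0,+\infty[;L^1(\R^3))$). Continuity in time and uniqueness follow as in Theorem~\ref{thmlocal}.

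The step I expect to be the main obstacle is the bookkeeping between two different measures. The quantities $\eta,U,\Gamma$ are measured with the three-dimensional weight $r\,dr\,dz$---which is exactly what makes $\eta\in L^1$, $U\in L^{3/2}$ and $\Gamma\in L^A\cap L^\infty$ scaling invariant---whereas the parabolic operator $\D+\frac2r\d_r$ and its Biot--Savart inverse live naturally on $\R^5$ with the weight $r^3\,dr\,dz$. Translating between the two, via weighted (Hardy) inequalities and the explicit heat kernel while keeping every exponent scaling invariant and every time weight integrable on $[0,+\infty[$, is where the real work lies; the smallness of $\|r\ut_0\|_{L^\infty}$ is what guarantees that, after this translation, the swirl enters only through perturbative lower-order terms, so that the globally well posed swirl-free $\eta$--dynamics is not disrupted and the coupled estimates close.
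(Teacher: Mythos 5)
Your first step is essentially the paper's (Lemma~\ref{lem2.1.2}): the maximum principle $\|r\ut(t)\|_{L^p}\leq\|r\ut_0\|_{L^p}$ for the swirl $\Gamma=r\ut$ is indeed the starting point, and the observation that smallness of $\|r\ut_0\|_{L^\infty}$ should make the swirl perturbative is the right philosophy. The choice of variables $\eta,\,U,\,\Gamma$ is also close to the paper's (which in addition uses $V^\e=\ut/r^{1-\e}$ and $W=r^{-7/11}\ut$). However, there is a genuine gap in the closure of the $L^1$ estimate for $\eta$ and the $L^{3/2}$ estimate for $U$, and it is exactly the difficulty the authors flag in the Remark after Theorem~\ref{thmmain}.

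When you test the $\eta$ equation against $\mathrm{sgn}(\eta)$, the dissipation disappears: the coefficient $\frac{4(p-1)}{p^2}$ of $\|\nabla|\eta|^{p/2}\|_{L^2}^2$ in \eqref{3.19} vanishes at $p=1$. Your bound $\|\eta(t)\|_{L^1}\leq\|\eta_0\|_{L^1}+\int_0^t\|2U\pa_z U\|_{L^1}\,ds$ therefore leaves a forcing integral with no matching dissipation on the left to absorb it, and a Gronwall argument cannot yield bounds that are uniform on $[0,\infty)$. The same issue affects the $U$-estimate: there is no reason for the Duhamel $L^p$--$L^q$ estimates to produce time-integrable kernels on the whole half-line without a smallness factor that kills the nonlinear contribution, and Gronwall would again only give exponential growth. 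Moreover, the claim that "Hardy-type inequalities absorb the weight $r^{-4}$" against $\pa_z\Gamma$ in the forcing $2\Gamma\pa_z\Gamma/r^4$ is not substantiated: after converting to the $rdrdz$ measure you need to control $\int|\pa_z\Gamma|/r^3\,drdz$, which is not a standard Hardy quantity and is not obviously finite. The paper instead rewrites the forcing precisely as
$\|\pa_z|W|^{11/12}\|_{L^2}\cdot\|r\ut\|_{L^\infty}^{1/6}\cdot\|\,|W|^{11/6}/r^2\|_{L^1}^{1/2}$
(see \eqref{5.13}--\eqref{5.16}) so that the last two factors are \emph{exactly} dissipation terms appearing on the left of the $W$-energy inequality \eqref{5.12}, multiplied by the small factor $\|r\ut\|_{L^\infty}^{1/6}$.

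Even granting that, the $W$-estimate \eqref{5.12} and the $V^{\e_0}$-estimate \eqref{5.17} still involve $\|\nabla|\eta|^{p_0/2}\|_{L^2}^2$ on the right, which must be absorbed by the \emph{same} dissipation appearing in an $L^{p_0}$-estimate for $\eta$ with $p_0>1$. This is why the paper's proof is a two-step argument: first apply Theorem~\ref{thmlocal} to smooth the data so that at some $t_0>0$ one has $\eta(t_0)\in L^{p_0}$ with $p_0>1$ (Lemma~\ref{lem5.1} makes this quantitative), then run the global \emph{a priori} estimate of Section~\ref{sec4} from $t_0$ onwards in the coupled system $(\eta$ in $L^1$, $W$ in $L^{11/6}$, $V^{\e_0}$ in $L^{q_0}$, $\eta$ in $L^{p_0})$, where the dissipation from $p_0>1$ is what closes everything. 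Your proposal omits this auxiliary $L^{p_0}$ level entirely and works only at the purely critical exponents $p=1$, $q=3/2$, where the dissipation mechanism you need is absent; without it, the argument does not close.
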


\begin{rmk} \begin{itemize}
\item The main difficulty in the proof of the above theorem is when $\ot_0\in L^p(\Omega)$ for $p=1,$
the dissipative term, $\frac{4(p-1)}{p^2}\|\nabla|\eta|^{\frac p2}\|_{L^2}^2,$ in \eqref{3.19} disappears. That is the reason
why we divide the proof of Theorem \ref{thmmain} in the following two steps: we first get, by applying Theorem \ref{thmlocal}, that the system
\eqref{otut} has a unique local solution with $\eta(t_0)\in L^{p_0}(\R^3)$ for some $t_0>0$ and $p_0>1;$ then in the second
step, starting with initial data at $t_0,$ we prove the global well-posedness of the system \eqref{otut}.

\item One may see \eqref{5.6},~\eqref{5.19},~\eqref{5.21} and \eqref{5.27}
for the exact smallness condition for $\|r \ut_0\|_{L^\infty}$.
And the exact global estimate of $\|\eta(t)\|_{L^1}$ and $\|U(t)\|_{L^{\f32}}$ is given in \eqref{5.28}.

\item It follows from  Lemma \ref{lem2.1.2} below and H\"{o}lder's inequality
$$\|r^\kappa\ut\|_{L^{\frac{3}{1-\kappa}}}\leq\|U\|_{L^{\frac32}}^{\frac{1-\kappa}2}
\|r\ut\|_{L^\infty}^{\frac{1+\kappa}2},\quad \forall\ \kappa\in]-1,1[,$$
 that the solutions constructed in Theorem \ref{thmmain} in fact satisfy
\begin{equation*}
r^\kappa\ut\in C\bigl([0,+\infty[;L^{\frac{3}{1-\kappa}}\bigr),\quad \forall\ \kappa\in[-1,1].
\end{equation*}
\end{itemize}
\end{rmk}

\section{Preliminaries}\label{sec2}
\subsection{Some elementary results}\label{sec2.1}
\begin{lem}[Proposition 1 of \cite{CL02}]\label{lem2.1.2}
{\sl Let $(u^r,u^\theta, u^z)$ be a smooth enough solution of \eqref{1.2} on $[0,T].$ Then for any $p\in[2,\infty]$, we have
\begin{equation}\label{2.1.1}
\|r\ut(t)\|_{L^p}\leqslant\|r\ut_0\|_{L^p}\quad \forall \ t\in [0,T].
\end{equation}
}
\end{lem}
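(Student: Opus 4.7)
\textbf{Proof plan for Lemma \ref{lem2.1.2}.}

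The natural object here is the \emph{angular momentum} $\Gamma \eqdefa r\ut$. My first step is to derive a clean transport-diffusion equation for $\Gamma$. Multiplying the second equation of \eqref{1.2} by $r$, noting that $\pa_t(r\ut) = r\pa_t\ut$ and that the convective derivative pulls out one extra copy of $u^r\ut$, which cancels the nonlinear term $\frac{u^r \ut}{r}$ once one writes everything in terms of $\Gamma$, and computing
\[
r\bigl(\pa_r^2+\pa_z^2+\tfrac{1}{r}\pa_r-\tfrac{1}{r^2}\bigr)\ut
=\bigl(\pa_r^2+\pa_z^2-\tfrac{1}{r}\pa_r\bigr)\Gamma,
\]
I expect to arrive at the pure transport-diffusion equation
\[
\pa_t\Gamma+(u^r\pa_r+u^z\pa_z)\Gamma-\bigl(\pa_r^2+\pa_z^2-\tfrac{1}{r}\pa_r\bigr)\Gamma=0.
\]

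Next, for $p\in[2,\infty[$, I would run the standard $L^p$ energy estimate using the three-dimensional measure $r\,dr\,dz$ (which is exactly the one associated with $\|\cdot\|_{L^p}$ defined just above the lemma). Multiplying the $\Gamma$-equation by $|\Gamma|^{p-2}\Gamma$ and integrating against $r\,dr\,dz$, the convective term becomes
\[
\tfrac{1}{p}\int_\Omega \bigl(u^r\pa_r+u^z\pa_z\bigr)|\Gamma|^p \; r\,dr\,dz
=-\tfrac{1}{p}\int_\Omega |\Gamma|^p\bigl(\pa_r(ru^r)+r\pa_z u^z\bigr)\,dr\,dz=0
\]
by the axisymmetric divergence-free condition $\pa_r u^r+\tfrac{1}{r}u^r+\pa_z u^z=0$. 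For the dissipative part, integration by parts in $r$ (respectively $z$), together with the fact that $\ut$ vanishes at the axis so $\Gamma$ has a double zero at $r=0$ (killing boundary terms), shows that the $\tfrac{1}{r}\pa_r\Gamma$ correction precisely cancels the boundary-type $\tfrac{1}{p}\int\pa_r(|\Gamma|^p)\,dr\,dz$ term arising from the Jacobian $r$, leaving the non-negative dissipation
\[
(p-1)\int_\Omega\bigl(|\pa_r\Gamma|^2+|\pa_z\Gamma|^2\bigr)|\Gamma|^{p-2}\,r\,dr\,dz\geq 0.
\]
Hence $\tfrac{d}{dt}\|\Gamma\|_{L^p}^p\leq 0$, which is \eqref{2.1.1} for finite $p$.

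The $p=\infty$ case then follows by passing to the limit $p\to\infty$ in the resulting inequality $\|\Gamma(t)\|_{L^p}\leq \|\Gamma_0\|_{L^p}$; alternatively one can invoke directly the maximum principle for the scalar transport–diffusion equation above, whose coefficients are smooth enough on the support of a smooth solution. The main technical subtlety, which I would spell out carefully, is the justification of the vanishing boundary terms at $r=0$ and at $r=\infty$: the former uses the smoothness of $(u^r,\ut,u^z)$ at the axis, which forces $\ut|_{r=0}=0$ and thus $\Gamma/r^2$ to remain bounded near the axis, while the latter relies on the decay encoded in the smoothness assumption on the solution. Beyond these checks, the argument is essentially a textbook $L^p$ energy estimate for a transport-diffusion equation with divergence-free drift.
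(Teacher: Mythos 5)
The paper itself does not prove this lemma --- it simply cites Proposition~1 of Chae--Lee \cite{CL02} --- and your argument is precisely the standard one given there: rewrite the $\ut$ equation as a pure transport--diffusion equation for $\Gamma=r\ut$, run the $L^p$ energy estimate with the 3D measure $r\,dr\,dz$, kill the convective term using $\pa_r(ru^r)+r\pa_z u^z=0$, observe the dissipation is nonnegative and the boundary terms vanish at the axis, then send $p\to\infty$. Your derivation of the $\Gamma$ equation and the identity $r\bigl(\pa_r^2+\pa_z^2+\tfrac1r\pa_r-\tfrac1{r^2}\bigr)\ut=\bigl(\pa_r^2+\pa_z^2-\tfrac1r\pa_r\bigr)\Gamma$ are both correct, and the overall conclusion is sound.

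One small imprecision worth flagging: you say the $-\tfrac1r\pa_r\Gamma$ correction ``precisely cancels'' the first-order term coming from the Jacobian. With the sign that actually appears in the $\Gamma$ equation, namely $\pa_r^2-\tfrac1r\pa_r$ (not $\pa_r^2+\tfrac1r\pa_r$, which is the usual axisymmetric Laplacian), the two lower-order contributions carry the \emph{same} sign after integration by parts and combine to
\[
\frac{2}{p}\int_\Omega \pa_r\bigl(|\Gamma|^p\bigr)\,dr\,dz,
\]
which is a total $r$-derivative; it vanishes because $\Gamma=r\ut$ is $O(r)$ (indeed $O(r^2)$ for a smooth axisymmetric field) as $r\to0$ and decays as $r\to\infty$, not by cancellation against the Jacobian term. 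This does not change the final outcome --- no spurious lower-order terms survive and the remaining dissipation is nonnegative --- but it is worth stating the mechanism correctly. With that adjustment your proof is complete and matches the cited reference.
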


\begin{lem}[See Lemma 5.5 from \cite{BCD} for instance]\label{lem4.5}
{\sl Let $E$ be a Banach space, $\frak{B}(\cdot,\cdot)$ a continuous bilinear map from $E\times E$ to $E,$ and $\frak{\alpha}$ a positive
real number such that
\beno
\frak{\alpha}<\f1{4\|\frak{B}\|}\with \|\frak{B}\|\eqdefa\sup_{\|f\|,\|g\|\leq 1}\|\frak{B}(f,g)\|.
\eeno
Then for any $a$ in the ball $B(0,\frak{\alpha})$ in $E,$  there exists a unique $x$ in $B(0,2\frak{\alpha})$ such that
\beno
x=a+\frak{B}(x,x).
\eeno}
\end{lem}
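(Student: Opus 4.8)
The plan is to solve $x = a + \fB(x,x)$ by the abstract Picard (contraction-mapping) scheme. Introduce the map $\Phi : E \to E$ defined by $\Phi(x) \eqdefa a + \fB(x,x)$, so that a solution of the equation is precisely a fixed point of $\Phi$. Set $k \eqdefa 4\alpha\|\fB\|$, which by hypothesis satisfies $0 \le k < 1$; one may assume $\|\fB\| > 0$, since otherwise $\Phi$ is the constant map $x \mapsto a$ and $x = a$ is trivially the unique solution, lying in $B(0,2\alpha)$ because $\|a\| \le \alpha < 2\alpha$. I would work on the closed ball $X \eqdefa \{x \in E : \|x\| \le 2\alpha\}$, which is a complete metric space as a closed subset of the Banach space $E$.

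First I would check that $\Phi$ maps $X$ into itself. Using the defining estimate $\|\fB(f,g)\| \le \|\fB\|\,\|f\|\,\|g\|$ for the operator norm, for $x \in X$ one gets
$$\|\Phi(x)\| \le \|a\| + \|\fB\|\,\|x\|^2 \le \alpha + 4\alpha^2\|\fB\| = \alpha(1+k) < 2\alpha,$$
so in fact $\Phi(X) \subset X$, with a strict inequality that I will reuse at the end. Next I would verify the contraction property: for $x,y \in X$, bilinearity gives $\fB(x,x) - \fB(y,y) = \fB(x-y,x) + \fB(y,x-y)$, whence
$$\|\Phi(x) - \Phi(y)\| \le \|\fB\|\,(\|x\| + \|y\|)\,\|x-y\| \le 4\alpha\|\fB\|\,\|x-y\| = k\,\|x-y\|,$$
with $k < 1$.

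With these two estimates in hand, the Banach fixed-point theorem applied to $\Phi : X \to X$ yields a unique $x \in X$ with $x = \Phi(x) = a + \fB(x,x)$. Since the self-map estimate above is strict, this $x$ satisfies $\|x\| = \|\Phi(x)\| < 2\alpha$, i.e. $x$ lies in the open ball $B(0,2\alpha)$; and any solution belonging to $B(0,2\alpha)$ lies a fortiori in $X$, so uniqueness in $X$ forces uniqueness in $B(0,2\alpha)$. There is no genuine obstacle here — this is the standard abstract lemma underlying Kato-type mild-solution constructions — and the only minor points to watch are keeping the inequality $\alpha(1+k) < 2\alpha$ strict so that the solution sits in the open (not merely closed) ball, and disposing of the degenerate case $\|\fB\| = 0$ so that division by $\|\fB\|$ is harmless. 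If one prefers to avoid quoting Banach's theorem, the same conclusion follows by showing directly that the Picard iterates $x_0 = a$, $x_{n+1} = \Phi(x_n)$ remain in $X$ and satisfy $\|x_{n+1} - x_n\| \le k\,\|x_n - x_{n-1}\|$, hence form a Cauchy sequence whose limit solves the equation, with uniqueness again read off from the contraction bound.
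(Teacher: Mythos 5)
Your proof is correct and is exactly the standard Banach fixed-point argument that Lemma~5.5 of \cite{BCD} (which the paper simply cites without reproving) itself uses: show $\Phi(x)=a+\fB(x,x)$ maps the closed ball of radius $2\alpha$ into itself and is a $k$-contraction with $k=4\alpha\|\fB\|<1$, then invoke the contraction mapping theorem. The care you take with the degenerate case $\|\fB\|=0$ and with keeping the self-map bound strict so the fixed point lands in the open ball are both sound and match what is needed.
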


Let us recall also some facts from Section 2 of \cite{GS}. We first recall the axisymmetric Biot-Savart law
which determines $\widetilde{u}=(u^r,u^z)$ in terms of $\omega^\theta,$ namely
\begin{equation}\label{2.2.1}
u^r(r,z)=\int_\Omega G_r(r,z,\bar{r},\bar{z})\ot(\bar{r},\bar{z})d\bar{r}d\bar{z},\quad
u^z(r,z)=\int_\Omega G_z(r,z,\bar{r},\bar{z})\ot(\bar{r},\bar{z})d\bar{r}d\bar{z},
\end{equation}
where
\beq\label{2.2.2}
\begin{split}
G_r(r,z,\bar{r},\bar{z})=&-\frac{1}{\pi}\frac{z-\bar{z}}{r^{3/2}\bar{r}^{1/2}}F'(\xi^2),\quad
\xi^2=\frac{(r-\bar{r})^2+(z-\bar{z})^2}{r\bar{r}},\\
G_z(r,z,\bar{r},\bar{z})=&\frac{1}{\pi}\frac{r-\bar{r}}{r^{3/2}\bar{r}^{1/2}}F'(\xi^2)
+\frac{1}{4\pi}\frac{\bar{r}^{1/2}}{r^{3/2}}\bigl(F(\xi^2)-2\xi^2 F'(\xi^2)\bigr)
\with\\
F(s)=& \int_0^{\frac{\pi}{2}}\frac{\cos(2\phi)d\phi}{(\sin^2\phi+s/4)^{1/2}},\quad s>0.
\end{split}
\eeq
It follows from the Remark 2.2 of \cite{GS} that
\begin{lem}\label{lem2.2.1}
{\sl $s^\alpha F(s)$ and $s^\beta F'(s)$ are bounded on $]0,\infty[$ for
$\alpha\in ]0,3/2]$ and $\beta\in[1,5/2]$.
}
\end{lem}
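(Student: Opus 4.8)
The plan is to reduce everything to the two limiting regimes $s\to 0^+$ and $s\to+\infty$. On any compact subinterval $[s_1,s_2]\subset\,]0,\infty[$ the map $(\phi,s)\mapsto(\sin^2\phi+s/4)^{-1/2}\cos(2\phi)$ is continuous, so $F$ is continuous there, and since $(\sin^2\phi+s/4)^{-3/2}\le(s_1/4)^{-3/2}$ uniformly for $s\in[s_1,s_2]$, differentiation under the integral sign is legitimate and gives
\[
F'(s)=-\frac18\int_0^{\pi/2}\frac{\cos(2\phi)}{(\sin^2\phi+s/4)^{3/2}}\,d\phi,
\]
which is again continuous on $]0,\infty[$. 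Hence $s^\alpha F(s)$ and $s^\beta F'(s)$ are continuous on $]0,\infty[$, and it is enough to control them as $s\to 0^+$ and as $s\to+\infty$.

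For $s\to+\infty$ the crucial point is the cancellation $\int_0^{\pi/2}\cos(2\phi)\,d\phi=0$, which lets us subtract the constants $(s/4)^{-1/2}$, resp.\ $(s/4)^{-3/2}$, from the integrands of $F(s)$, resp.\ $F'(s)$, without changing their values. Combining this with the elementary mean value bounds $|(a+\sin^2\phi)^{-1/2}-a^{-1/2}|\le\frac12 a^{-3/2}\sin^2\phi$ and $|(a+\sin^2\phi)^{-3/2}-a^{-3/2}|\le\frac32 a^{-5/2}\sin^2\phi$, applied with $a=s/4$, yields
\[
|F(s)|\le C\,s^{-3/2}\andf|F'(s)|\le C\,s^{-5/2}\qquad\text{for all }s>0,
\]
where the constant $\int_0^{\pi/2}\sin^2\phi\,d\phi$ has been absorbed into $C$. (Without using the cancellation one would only reach $s^{-1/2}$ and $s^{-3/2}$, missing the endpoints $\alpha=3/2$, $\beta=5/2$.)

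For $s\to0^+$ crude estimates suffice. In $F$, writing $\cos(2\phi)=1-2\sin^2\phi$, the part carrying $-2\sin^2\phi$ is bounded pointwise by $2|\sin\phi|$, hence contributes an $O(1)$ term; for the part carrying $1$, the concavity bound $\sin\phi\ge\frac2\pi\phi$ on $[0,\pi/2]$ gives $\int_0^{\pi/2}(\sin^2\phi+s/4)^{-1/2}\,d\phi\le C\int_0^{\pi/2}(\phi^2+s/4)^{-1/2}\,d\phi\le C(1+|\ln s|)$, so $|F(s)|\le C(1+|\ln s|)$ for $0<s\le1$. The same lower bound on $\sin\phi$ gives $|F'(s)|\le C\int_0^{\infty}(\phi^2+s/4)^{-3/2}\,d\phi=C'/s$ for every $s>0$.

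Combining the two regimes finishes the proof: for $0<s\le1$ one has $s^\alpha|F(s)|\le C s^\alpha(1+|\ln s|)$, bounded precisely because $\alpha>0$, and $s^\beta|F'(s)|\le C s^{\beta-1}$, bounded because $\beta\ge1$; for $s\ge1$ one has $s^\alpha|F(s)|\le C s^{\alpha-3/2}\le C$ since $\alpha\le3/2$, and $s^\beta|F'(s)|\le C s^{\beta-5/2}\le C$ since $\beta\le5/2$. With the continuity from the first step, this gives the asserted global boundedness. The only mildly delicate point is the large-$s$ analysis: one must notice the vanishing of $\int_0^{\pi/2}\cos(2\phi)\,d\phi$ and use it to gain the extra factor $s^{-1}$; the rest is routine.
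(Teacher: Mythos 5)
Your proof is correct and complete. The paper does not prove this lemma itself; it simply attributes it to Remark~2.2 of Gallay and \v{S}ver\'ak \cite{GS}, where the statement is deduced from the asymptotic expansions of $F$ (related to complete elliptic integrals) near $s=0$ and $s=\infty$. Your argument is an elementary and self-contained substitute: you correctly identify the two key mechanisms, namely the cancellation $\int_0^{\pi/2}\cos(2\phi)\,d\phi=0$ (which, combined with a mean-value bound, upgrades the crude decays $s^{-1/2}$, $s^{-3/2}$ to the sharp $s^{-3/2}$, $s^{-5/2}$ needed to reach the endpoints $\alpha=3/2$, $\beta=5/2$), and the logarithmic-versus-$s^{-1}$ behavior near the origin (which is exactly what makes $\alpha>0$ and $\beta\ge 1$ sufficient). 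All the estimates check out: differentiation under the integral sign is justified by the uniform bound on compacts; the MVT inequalities $\bigl|(a+b)^{-1/2}-a^{-1/2}\bigr|\le\frac12 a^{-3/2}b$ and $\bigl|(a+b)^{-3/2}-a^{-3/2}\bigr|\le\frac32 a^{-5/2}b$ hold for $a,b>0$; the split $\cos(2\phi)=1-2\sin^2\phi$ together with $\sin\phi\ge\frac2\pi\phi$ gives $|F(s)|\lesssim 1+|\ln s|$ and $|F'(s)|\lesssim 1/s$; and the two regimes $s\le 1$, $s\ge 1$ cover $]0,\infty[$, so the appeal to continuity is not even strictly necessary. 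This is a clean alternative to quoting the asymptotics from \cite{GS}, at the cost of not producing the precise leading constants (which, however, the lemma as stated does not require).
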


\begin{lem}[Proposition 2.3 of \cite{GS}]\label{lem2.2.2}
{\sl Let us denote $\widetilde{u}\eqdefa(u^r,u^z).$ Then one has\\
$\rmnum{1})$ Assume that $1<p<2<q<\infty$ and $\frac 1q=\frac 1p-\frac 12$.
If $\ot\in L^p(\Omega)$, then $\widetilde{u}\in L^q(\Omega)$ and
\beq\label{2.2.5}
\|\widetilde{u}\|_{L^q(\Omega)}\leqslant C\|\ot\|_{L^p(\Omega)}.
\eeq
$\rmnum{2})$ If $1\leqslant p<2<q\leqslant\infty$ and $\ot\in L^p(\Omega)\bigcap L^q(\Omega)$,
then $\widetilde{u}\in L^\infty(\Omega)$ and
\beq\label{2.2.6}
\|\widetilde{u}\|_{L^\infty(\Omega)}\leqslant C\|\ot\|_{L^p(\Omega)}^\sigma\|\ot\|_{L^q(\Omega)}^{1-\sigma},
\quad\hbox{where}\quad \sigma=\frac{p(q-2)}{2(q-p)}\in ]0,1[.
\eeq
}
\end{lem}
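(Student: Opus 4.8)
The plan is to reduce both bounds to a single scalar convolution inequality in the half--plane. The engine is the pointwise bound $|\widetilde{u}(r,z)| \le C\int_\Omega |(r,z)-(\bar r,\bar z)|^{-1}\,|\ot(\bar r,\bar z)|\,d\bar r\,d\bar z$ ($\ot$ being extended by zero to $\R^2$). To establish it I would feed the estimates of Lemma \ref{lem2.2.1} into the explicit formulas \eqref{2.2.1}--\eqref{2.2.2}: writing $\rho=|(r,z)-(\bar r,\bar z)|$, so that $\xi^2=\rho^2/(r\bar r)$ and $|r-\bar r|,|z-\bar z|\le\rho$, and choosing the exponents $\alpha\in]0,3/2]$ and $\beta\in[1,5/2]$ permitted by Lemma \ref{lem2.2.1} according to the regime -- $\rho^2\lesssim r\bar r$ versus $\rho^2\gtrsim r\bar r$, and within the latter $\bar r\lesssim r$ versus $\bar r\gg r$ -- the powers of $r$ and $\bar r$ carried by $G_r$ and $G_z$ get absorbed and one is left with $|G_r|+|G_z|\le C/\rho$. (As a check, this is the bound produced by the three--dimensional Biot--Savart law $|\widetilde{u}(x)|\lesssim\int_{\R^3}|\ot|\,|x-y|^{-2}\,dy$ once the angular variable is integrated out, using $\int_0^{2\pi}\bigl(\rho^2+4r\bar r\sin^2(\bar\theta/2)\bigr)^{-1}d\bar\theta=\pi\rho^{-1}(\rho^2+4r\bar r)^{-1/2}$ and $\bar r\,(\rho^2+4r\bar r)^{-1/2}\le C$.) Granting this, part $\rmnum{1})$ is immediate: $|\cdot|^{-1}$ is the kernel of the Riesz potential of order $1$ in dimension $2$, and the Hardy--Littlewood--Sobolev inequality gives $\|\widetilde{u}\|_{L^q(\Omega)}\le C\|\ot\|_{L^p(\Omega)}$ in precisely the range $1<p<2$, $\frac1q=\frac1p-\frac12$.

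For part $\rmnum{2})$ I would cut the kernel at an arbitrary scale $R>0$ and apply H\"older: $\int_{\rho\le R}\rho^{-1}|\ot|\le\|\ot\|_{L^q(\Omega)}\|\rho^{-1}\car_{\rho\le R}\|_{L^{q'}(\R^2)}\le C\|\ot\|_{L^q(\Omega)}R^{2/q'-1}$, which is finite because $q>2$ (hence $q'<2$), while $\int_{\rho>R}\rho^{-1}|\ot|\le\|\ot\|_{L^p(\Omega)}\|\rho^{-1}\car_{\rho>R}\|_{L^{p'}(\R^2)}\le C\|\ot\|_{L^p(\Omega)}R^{2/p'-1}$ is finite because $p<2$ (hence $p'>2$). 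Since $2/q'-1>0>2/p'-1$, minimizing the sum over $R$ balances the two contributions and gives $\|\widetilde{u}\|_{L^\infty(\Omega)}\le C\|\ot\|_{L^p(\Omega)}^{\sigma}\|\ot\|_{L^q(\Omega)}^{1-\sigma}$ with $\sigma=\frac{2/q'-1}{2/q'-2/p'}$; since $2/q'-1=(q-2)/q$ and $2/q'-2/p'=2(q-p)/(pq)$, this is exactly $\sigma=\frac{p(q-2)}{2(q-p)}\in]0,1[$, and the borderline cases $p=1$ ($p'=\infty$) and $q=\infty$ ($q'=1$) are handled by the same splitting.

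The only genuinely delicate step is the kernel bound $|G_r|+|G_z|\le C/\rho$ from the first paragraph: the weights $r^{-3/2}$, $\bar r^{\pm1/2}$ and $(r\bar r)^\beta$ appearing in \eqref{2.2.2} are individually unbounded, so one must use the full two--sided ranges for $\alpha$ and $\beta$ allowed by Lemma \ref{lem2.2.1} together with the case split on the relative sizes of $r$, $\bar r$ and $\rho$ (equivalently, one leans on the elementary inequality $\bar r/\sqrt{\rho^2+4r\bar r}\le C$). Once that bound is secured, the rest is just the standard Hardy--Littlewood--Sobolev and truncation machinery.
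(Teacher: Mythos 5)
Your proposal is correct, and it follows essentially the same route as the proof of Proposition~2.3 in \cite{GS}, which this paper cites without reproducing: one derives the pointwise kernel bound $|G_r|+|G_z|\lesssim|(r,z)-(\bar r,\bar z)|^{-1}$ from Lemma~\ref{lem2.2.1} via a case split on the relative sizes of $r$, $\bar r$ and $\rho=|(r,z)-(\bar r,\bar z)|$ (e.g.\ with $\beta=1$ or $\gamma=1/2$ when $\bar r\lesssim r$ and $\beta=\gamma=3/2$ when $\bar r\gg r$, so that $\bar r\lesssim\rho$ absorbs the remaining power), and then part~(\rmnum{1}) is Hardy--Littlewood--Sobolev for the order-one Riesz potential in $\R^2$ while part~(\rmnum{2}) is the standard truncation of the kernel at scale $R$ followed by optimization, which produces exactly $\sigma=\frac{p(q-2)}{2(q-p)}$. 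The only slip is a harmless constant: the angular integral in your sanity check is $2\pi\rho^{-1}(\rho^2+4r\bar r)^{-1/2}$, not $\pi\rho^{-1}(\rho^2+4r\bar r)^{-1/2}$.
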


Next we investigate the solution operator $S(t)$ to the linearized system of \eqref{otut}, namely
$\omega^\th(t)=S(t)\omega_0$ verifies
\beq\label{linear}
\left\{
\begin{array}{l}
\displaystyle \pa_t\ot-\bigl(\pa_r^2+\pa_z^2+\frac 1r \pa_r-\frac{1}{r^2}\bigr)\ot=0,   \quad(t,r,z)\in\R^+\times\Omega\\
\displaystyle \ot|_{r=0}=0,\\
\displaystyle \ot|_{t=0} =\ot_0.
\end{array}
\right.
\eeq

\begin{lem}[Lemma 3.1,~3.2 of \cite{GS}]
{\sl For any $t>0$, one has
\begin{equation}\label{lemexpresion1}
\bigl(S(t)\omega_0\bigr)(r,z)=\frac{1}{4\pi t}\int_{\Omega}\frac{\bar{r}^{1/2}}{r^{1/2}}
{H}\Bigl(\frac{t}{r\bar{r}}\Bigr)\exp\Bigl(-\frac{(r-\bar{r})^2+(z-\bar{z})^2}{4t}\Bigr)
\omega_0(\bar{r},\bar{z})d\bar{r}d\bar{z},
\end{equation}
where the function ${H}:]0,+\infty[\rightarrow\R$ is defined by
\begin{equation}\label{lemexpresion2}
{H}(t)=\frac{1}{\sqrt{\pi t}}\int_{-\pi/2}^{\pi/2}e^{-\frac{\sin^2 \phi}{t}}\cos(2\phi) d\phi,\qquad t>0,
\end{equation}
which is smooth on $]0,\infty[$ and has the asymptotic expansions:\\
\rmnum{1})~${H}(t)=\frac{\pi^{1/2}}{4t^{3/2}}+\mathcal{O}\bigl(\frac{1}{t^{5/2}}\bigr),\
{H}'(t)=-\frac{3\pi^{1/2}}{8t^{5/2}}+\mathcal{O}\bigl(\frac{1}{t^{7/2}}\bigr),\ \mbox{as $t\rightarrow\infty$;}$\\
\rmnum{2})~${H}(t)=1-\frac{3t}{4}+\mathcal{O}(t^2),\
{H}'(t)=-\frac{3}{4}+\mathcal{O}(t),\ \mbox{as $t\rightarrow0$.}$
}
\end{lem}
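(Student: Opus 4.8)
The plan is to recognize the linear operator $\pa_r^2+\pa_z^2+\frac1r\pa_r-\frac1{r^2}$ appearing in \eqref{linear} as the $e_\theta$-component of the three–dimensional vector Laplacian acting on axisymmetric ``swirl'' vector fields. Given $\ot_0:\Omega\to\R$ with $\ot_0|_{r=0}=0$, set $w_0(x)\eqdefa\ot_0(r,z)\,e_\theta$, that is $w_0=(-\ot_0\sin\theta,\ot_0\cos\theta,0)$ in Cartesian coordinates. A direct computation in cylindrical coordinates shows that for a field of this special form the $e_\theta$-component of $\Delta w$ is exactly $(\pa_r^2+\pa_z^2+\frac1r\pa_r-\frac1{r^2})\ot$, while each Cartesian component of $w$ solves the scalar heat equation on $\R^3$; hence $\ot(t)=S(t)\ot_0$ solves \eqref{linear} if and only if $w(t,x)=(4\pi t)^{-3/2}\int_{\R^3}e^{-|x-y|^2/(4t)}w_0(y)\,dy$, and the boundary condition $\ot|_{r=0}=0$ is automatic because $w(t,\cdot)$ stays a continuous vector field of the form $f(r,z)e_\theta$. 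This reduces \eqref{lemexpresion1} to an explicit integral identity, and also gives, via standard heat-semigroup theory, the smoothness and continuity properties one needs.

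To extract \eqref{lemexpresion1} I would evaluate $w(t,x)$ at $x=(r,0,z)$, where $e_\theta=(0,1,0)$, and read off its second component. Writing $y=(\bar r\cos\bar\theta,\bar r\sin\bar\theta,\bar z)$, $dy=\bar r\,d\bar r\,d\bar\theta\,d\bar z$, and $|x-y|^2=(r-\bar r)^2+(z-\bar z)^2+2r\bar r(1-\cos\bar\theta)$, one finds
\beno
(S(t)\ot_0)(r,z)=(4\pi t)^{-3/2}\int_\Omega\bar r\,\ot_0(\bar r,\bar z)\,e^{-\frac{(r-\bar r)^2+(z-\bar z)^2}{4t}}\Bigl(\int_{-\pi}^{\pi}\cos\bar\theta\,e^{-\frac{r\bar r(1-\cos\bar\theta)}{2t}}\,d\bar\theta\Bigr)d\bar r\,d\bar z.
\eeno
The substitution $\bar\theta=2\phi$, together with $1-\cos\bar\theta=2\sin^2\phi$ and $\cos\bar\theta=\cos2\phi$, converts the inner integral into $2\int_{-\pi/2}^{\pi/2}\cos(2\phi)e^{-\sin^2\phi/\tau}\,d\phi=2\sqrt{\pi\tau}\,H(\tau)$ with $\tau=t/(r\bar r)$ and $H$ as in \eqref{lemexpresion2}; collecting the remaining prefactors, $(4\pi t)^{-3/2}\cdot\bar r\cdot2\sqrt{\pi t/(r\bar r)}=\frac1{4\pi t}\,\bar r^{1/2}r^{-1/2}$, yields \eqref{lemexpresion1} exactly. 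Smoothness of $H$ on $]0,\infty[$ follows by differentiating under the integral sign.

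For the asymptotics I would first rewrite, via $u=\sin\phi$,
\beno
H(t)=\frac1{\sqrt{\pi t}}\int_{-1}^{1}e^{-u^2/t}g(u)\,du,\qquad g(u)\eqdefa\frac{1-2u^2}{\sqrt{1-u^2}}.
\eeno
As $t\to0$ this is a Laplace-type integral: rescaling $u=\sqrt t\,v$, discarding an exponentially small tail, and expanding $g(\sqrt t\,v)=1-\tfrac32tv^2+\mathcal{O}(t^2)$ against the Gaussian gives $H(t)=1-\tfrac34t+\mathcal{O}(t^2)$. As $t\to\infty$ I would instead Taylor-expand $e^{-u^2/t}=1-u^2/t+\mathcal{O}(t^{-2})$; here the point is that the naive leading term vanishes, $\int_{-1}^{1}g(u)\,du=\int_{-\pi/2}^{\pi/2}\cos(2\phi)\,d\phi=0$, so the first nonzero contribution is $-\tfrac1t\int_{-1}^{1}u^2g(u)\,du=\tfrac{\pi}{4t}$, whence $H(t)=\tfrac{\pi^{1/2}}{4}t^{-3/2}+\mathcal{O}(t^{-5/2})$. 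For $H'$ I would differentiate under the integral sign to obtain the analogous representation $H'(t)=\frac1{\sqrt\pi}\,t^{-3/2}\int_{-1}^{1}\bigl(-\tfrac12+\tfrac{u^2}{t}\bigr)e^{-u^2/t}g(u)\,du$ and run the same Laplace/Taylor analysis; again the naive leading terms cancel (as $t\to0$ because $\int_{\R}e^{-v^2}(-\tfrac12+v^2)\,dv=0$, as $t\to\infty$ because $\int_{-1}^{1}g=0$), producing $H'(t)=-\tfrac34+\mathcal{O}(t)$ and $H'(t)=-\tfrac{3\pi^{1/2}}{8}t^{-5/2}+\mathcal{O}(t^{-7/2})$ respectively.

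The only genuine difficulty is the bookkeeping in these expansions: because of the vanishing moments $\int_{-\pi/2}^{\pi/2}\cos(2\phi)\,d\phi=0$ one must always carry the computation one order past the naive leading term to see the true behaviour of $H$ and $H'$, and one must justify differentiation under the integral (routine dominated convergence, since $g\in L^1(-1,1)$ and the $t$-derivatives of the kernel are integrable on compact subintervals of $]0,\infty[$). Identifying $S(t)$ with the restricted $\R^3$ heat semigroup also requires a few standard verifications — that $w(t,\cdot)$ is genuinely of the form $f(r,z)e_\theta$, that it is smooth for $t>0$, and that it attains the initial data $\ot_0$ — but all of these are immediate from the explicit Gaussian representation.
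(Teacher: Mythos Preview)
Your proof is correct. Note, however, that the paper does not actually prove this lemma: it is quoted verbatim as Lemmas~3.1 and~3.2 of \cite{GS}, with no argument given. Your derivation --- lifting $\ot_0 e_\theta$ to a vector-valued heat equation on $\R^3$, integrating out the angular variable to obtain \eqref{lemexpresion1}, and then reading off the asymptotics of $H$ via Laplace/Taylor expansion around the cancellations $\int_{-\pi/2}^{\pi/2}\cos(2\phi)\,d\phi=0$ and $\int_{\R}e^{-v^2}(v^2-\tfrac12)\,dv=0$ --- is the standard route and almost certainly what \cite{GS} does as well. The computations you sketch all check out, including the prefactor $(4\pi t)^{-3/2}\cdot\bar r\cdot2\sqrt{\pi\tau}=\frac{1}{4\pi t}\bar r^{1/2}r^{-1/2}$ and the moment $\int_{-1}^{1}u^2g(u)\,du=-\pi/4$ that drives the large-$t$ expansion.
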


\begin{cor}\label{corH}
$t^\alpha H(t)$ and $t^\beta H'(t)$
are bounded on $]0,\infty[$  provided
$0\leq\alpha\leq\frac 32$,~$0\leq\beta\leq\frac 52$.
\end{cor}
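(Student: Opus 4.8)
The plan is to read off the corollary directly from the preceding lemma by splitting $]0,+\infty[$ into three regimes: a neighbourhood $]0,\delta]$ of the origin, a compact middle interval $[\delta,R]$, and a neighbourhood $[R,+\infty[$ of infinity. On the compact piece $[\delta,R]$ (with $0<\delta<R<\infty$ fixed but arbitrary) the functions $H$ and $H'$ are continuous, hence bounded, and the weights $t^\alpha$, $t^\beta$ are continuous and bounded there as well, so $t^\alpha H(t)$ and $t^\beta H'(t)$ are bounded on $[\delta,R]$ with no restriction at all on $\alpha,\beta$. Thus everything reduces to the two endpoint regimes.

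Near $t\to 0^+$ I would invoke expansion \rmnum{2}) of the previous lemma, which gives $H(t)=1+\mathcal O(t)$ and $H'(t)=-\tfrac34+\mathcal O(t)$; in particular $H$ and $H'$ are bounded on $]0,\delta]$ (they even have finite limits). Since $\alpha\geq 0$ and $\beta\geq 0$, the factors $t^\alpha$ and $t^\beta$ are bounded by $\delta^\alpha$ and $\delta^\beta$ on $]0,\delta]$, so $t^\alpha H(t)$ and $t^\beta H'(t)$ are bounded near $0$; this is exactly where the hypotheses $\alpha\geq 0$, $\beta\geq 0$ are used. Near $t\to+\infty$ I would use expansion \rmnum{1}): $H(t)=\tfrac{\pi^{1/2}}{4}t^{-3/2}+\mathcal O(t^{-5/2})$ and $H'(t)=-\tfrac{3\pi^{1/2}}{8}t^{-5/2}+\mathcal O(t^{-7/2})$, whence $t^\alpha H(t)=\tfrac{\pi^{1/2}}{4}t^{\alpha-3/2}+\mathcal O(t^{\alpha-5/2})$ and $t^\beta H'(t)=-\tfrac{3\pi^{1/2}}{8}t^{\beta-5/2}+\mathcal O(t^{\beta-7/2})$. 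These stay bounded on $[R,+\infty[$ precisely because $\alpha-3/2\leq 0$ (and a fortiori $\alpha-5/2<0$) and $\beta-5/2\leq 0$ (and $\beta-7/2<0$), i.e. because of the upper bounds $\alpha\leq 3/2$ and $\beta\leq 5/2$. Combining the three regimes yields a uniform bound on $]0,+\infty[$.

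Honestly there is no real obstacle here: the corollary is a pure bookkeeping consequence of the smoothness of $H$ and its two asymptotic expansions. The only point deserving a word of care is that the thresholds $3/2$ and $5/2$ are dictated entirely by the leading terms of the large-$t$ asymptotics of $H$ and $H'$, while the sole role of the lower bounds $\alpha,\beta\geq 0$ is to keep the weights from blowing up as $t\to 0^+$, where $H$ and $H'$ are themselves bounded.
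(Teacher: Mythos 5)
Your argument is correct and is exactly the reasoning the paper leaves implicit: the corollary is stated without proof as an immediate consequence of the smoothness of $H$ on $]0,\infty[$ together with its asymptotic expansions at $0$ and at $\infty$ recorded in the preceding lemma, and your three-regime split (compactness in the middle, $\alpha,\beta\geq 0$ near the origin, $\alpha\leq 3/2$ and $\beta\leq 5/2$ near infinity) is the standard bookkeeping that justifies it.
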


\subsection{The estimate of $\f{u^r}r$ in terms of $\f{\omega^\th}r$}

In this subsection, we shall exploit the basic facts recalled in Subsection \ref{sec2.1}  to derive the estimate of $\f{u^r}r$ in terms of $\f{\omega^\th}r$,
which will be used in Section \ref{sec4} below.
The main result states as follows:

\begin{prop}\label{lem2.6.1}
{\sl Let  $p\in]1,3[$ and  $q\in\bigl]\frac{3p}{3-p},\infty\bigr].$ We assume that
$\eta\eqdefa\frac{\ot}{r}\in L^p(\R^3)\cap L^{3p}(\R^3)$. Then  we have
\begin{equation}\label{3.1}
\bigl\|\frac{u^r}{r}\bigr\|_{L^q}\lesssim\|\eta\|_{L^p}^{\lambda}\|\eta\|_{L^{3p}}^{1-\lambda}
\with \quad \lambda=\frac{p-1}{2}+\frac{3p}{2q}.
\end{equation}}
\end{prop}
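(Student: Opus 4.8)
The plan is to estimate $u^r/r$ directly from the Biot--Savart law \eqref{2.2.1}, writing
$$
\frac{u^r(r,z)}{r}=\frac{1}{r}\int_\Omega G_r(r,z,\bar r,\bar z)\,\ot(\bar r,\bar z)\,d\bar r\,d\bar z
=\int_\Omega \frac{1}{r}\,G_r(r,z,\bar r,\bar z)\,\bar r\,\eta(\bar r,\bar z)\,d\bar r\,d\bar z,
$$
since $\ot=r\eta$. Using the explicit form of $G_r$ from \eqref{2.2.2} together with Lemma \ref{lem2.2.1} (applied to $F'$, i.e. $\xi^{2\beta}F'(\xi^2)$ bounded for $\beta\in[1,5/2]$), I would derive a pointwise bound of the form
$$
\frac{1}{r}\,\bigl|G_r(r,z,\bar r,\bar z)\bigr|\,\bar r\;\lesssim\;\frac{1}{\bigl((r-\bar r)^2+(z-\bar z)^2\bigr)^{a}}\cdot\frac{\bar r^{\,b}}{r^{\,c}}
$$
for a suitable family of admissible exponents obtained by choosing $\beta$ in the allowed range; the point is that $G_r$ has a $\frac{z-\bar z}{r^{3/2}\bar r^{1/2}}F'(\xi^2)$ structure, and $|z-\bar z|$ together with powers of $\xi^2=\frac{(r-\bar r)^2+(z-\bar z)^2}{r\bar r}$ lets one trade between decay in the "horizontal distance" $|(r,z)-(\bar r,\bar z)|$ and the homogeneity in $r,\bar r$. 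This is exactly the mechanism used in Proposition 2.3 of \cite{GS} (our Lemma \ref{lem2.2.2}), and I would follow that proof closely, the only new feature being the extra factor $\frac{\bar r}{r}$ coming from the passage $\ot\mapsto r\eta$ and division by $r$.

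Next I would split the kernel into a near-diagonal piece and a far piece (or, more efficiently, interpolate two endpoint bounds). One endpoint: treating the kernel as a weakly singular convolution-type operator in the plane variable $(r,z)$ and applying the Hardy--Littlewood--Sobolev inequality on $\Omega$ with the measure $d\bar r\,d\bar z$ gives a bound $\|u^r/r\|_{L^{q_1}}\lesssim\|\eta\|_{L^p}$ for one specific pair $(p,q_1)$ dictated by the homogeneity degree $a$; here one must also control the $\frac{\bar r^{b}}{r^{c}}$ factor, which is where Schur-test weights or Hölder in the $\bar r$-variable enter. The other endpoint: taking $q=\infty$ and estimating $\frac{u^r}{r}$ in $L^\infty$ against $\|\eta\|_{L^{p}}$ and $\|\eta\|_{L^{3p}}$ by splitting $\Omega$ into $|(r,z)-(\bar r,\bar z)|<R$ and $\ge R$ and optimizing in $R$, exactly as in the proof of \eqref{2.2.6}. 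Interpolating between these two endpoints produces \eqref{3.1} with the stated $\lambda=\frac{p-1}{2}+\frac{3p}{2q}$; one checks that $\lambda\in(0,1)$ precisely when $p\in]1,3[$ and $q\in\bigl]\tfrac{3p}{3-p},\infty\bigr]$, which is the hypothesis, and that the scaling (counting powers of a dilation $r\mapsto\lambda r$, $\eta\mapsto$ its rescaling) is consistent with the exponents $\lambda$ and $1-\lambda$.

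The main obstacle, I expect, is the handling of the anisotropic weight $\frac{\bar r^{b}}{r^{c}}$ in the kernel: unlike a pure Riesz potential on $\R^2$, the kernel here is not translation invariant in $r$, and the extra factor $\bar r/r$ (beyond what appears in \cite{GS}) can create integrability problems both near $r=0$ and near $\bar r=0$. The resolution is to use the flexibility in choosing $\beta\in[1,5/2]$ in Lemma \ref{lem2.2.1}: different choices of $\beta$ shift powers between the $\xi^{2}$-decay and the $r,\bar r$ monomials, and one selects $\beta$ so that the residual $r$- and $\bar r$-powers are harmless (e.g. can be absorbed by a one-dimensional Schur test in the $\bar r$ variable, or are bounded because on the support of the near-diagonal part one has $\bar r\sim r$). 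A secondary, more bookkeeping-type difficulty is verifying that the condition $q>\frac{3p}{3-p}$ is exactly what is needed for the far-field integral $\int_{|(r,z)-(\bar r,\bar z)|\ge R}$ to converge when paired (via Hölder) with $\|\eta\|_{L^{3p}}$; this is a direct computation once the kernel bound is in hand. Since the statement only requires $\eta\in L^p\cap L^{3p}$ and asserts an interpolation-type inequality, no fixed-point or PDE argument is needed here — it is purely a harmonic-analysis estimate for the Biot--Savart kernel.
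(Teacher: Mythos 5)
You have collected the right ingredients---the explicit form of $G_r$ from \eqref{2.2.2}, the freedom in choosing the exponent $\beta$ in Lemma~\ref{lem2.2.1}, the observation that $\bar r\lesssim r$ on the near-diagonal piece, Hardy--Littlewood--Sobolev, and an $R$-split followed by optimization over $R$---and this is exactly the paper's toolbox. The structural difference is that the paper does \emph{not} interpolate between two endpoints: for each $q$ separately it decomposes $\Omega=I_1\cup I_2$ with $I_1=\{\bar r\le 2r\}$ and $I_2=\{\bar r>2r\}$. On $I_1$ it fixes $\beta=7/6$ (yielding a $|\cdot|^{-4/3}$ kernel uniformly in $q$), applies HLS to obtain $\|\eta\|_{L^s}$ with $\tfrac1s=\tfrac1q+\tfrac13$, and then uses log-convexity of $L^s$ norms to produce $\|\eta\|_{L^p}^\lambda\|\eta\|_{L^{3p}}^{1-\lambda}$ directly. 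On $I_2$ (where $\bar r\lesssim|\bar r-r|$) it takes the $q$-\emph{dependent} exponent $\beta=\tfrac52-\tfrac1q$ in Lemma~\ref{lem2.2.1}, which absorbs the $r^{-(5/2-1/q)}$ weight exactly and yields the kernel $|(\bar r,\bar z)-(r,z)|^{-(1-1/q)}$; it then splits $I_2$ further by a radius $R$, applies Young's inequality on each sub-piece against $\|\eta\|_{L^p}$ and $\|\eta\|_{L^{3p}}$ respectively, and optimizes over $R$.

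The interpolation route you actually commit to does not close as written, because the pure-$L^p$ endpoint $\|u^r/r\|_{L^{q_1}}\lesssim\|\eta\|_{L^p}$ at $q_1=\frac{3p}{3-p}$ is not available by these techniques. The $I_1$ contribution does give it, but on $I_2$ the far-field Young integral $\int_R^\infty\rho^{(2/q-2/p)/(1-1/p+1/q)}\,\rho\,d\rho$ degenerates at $q=q_1$: one computes $\frac{2/q_1-2/p}{1-1/p+1/q_1}=-1$, so the integrand reduces to $\rho^0$ and the integral diverges logarithmically. This is exactly why the proposition excludes the lower endpoint from the admissible range of $q$. A log-convex interpolation starting from some finite $q_1'>\frac{3p}{3-p}$ instead of $q_1$ would indeed reproduce the exponent $\lambda=\frac{p-1}{2}+\frac{3p}{2q}$ algebraically, but it presupposes the two-norm bound \eqref{3.1} at $q_1'$, which is circular. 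You should therefore commit to the near/far decomposition you mention parenthetically, carried out directly for each $q$; the $R$-optimization then lives inside the $I_2$ piece rather than between two endpoints.
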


\begin{proof}
 By virtue of  \eqref{2.2.1} and \eqref{2.2.2}, we  write
\begin{equation}\label{3.2}
r^{\frac 1q}\frac{\ur(r,z)}{r}=-\int_\Omega\frac{z-\bar{z}}{\pi}\frac{\bar{r}^{\frac 12}}{r^{\frac 52-\frac 1q}}
F'(\xi^2)\eta(\bar{r},\bar{z})d\bar{r} d\bar{z}\quad \forall \ (r,z)\in\Omega.
\end{equation}
We  decompose the integral domain $\Omega=I_1\bigcup I_2$  with
\begin{equation}\label{3.3}
I_1\eqdefa \bigl\{(\bar{r},\bar{z})\in\Omega\ \big|\ \bar{r}\leq 2r,\  \bigr\} \andf
I_2\eqdefa\Omega\backslash I_1.
\end{equation} We first consider the case when $q<\infty$.
Let  $s$ be determined by
$\frac 1s=\frac 1q+\frac 13.$ Then due to  $q\in\bigl]\frac{3p}{3-p},\infty\bigr[,$  we have  $s>p.$ Moreover, it follows from  Lemma \ref{lem2.2.1} that
 $|F'(s)|\lesssim(\frac{1}{s})^{\frac 76}=\left(\frac{1}{s}\right)^{1+\frac12\left(\frac 1s-\frac 1q\right)}.$
Note that that $\frac{\bar{r}}{r}\leqslant 2$ in $I_1$ and
$\frac32-\frac12(\frac 1s+\frac 1q)>0,$  we thus obtain
\begin{equation*}\begin{split}
&\bigl|\int_{I_1}\frac{z-\bar{z}}{\pi}\frac{\bar{r}^{\frac 12}}{r^{\frac 52-\frac 1q}}
F'(\xi^2)\eta(\bar{r},\bar{z})d\bar{r} d\bar{z}\bigr|\\
&\lesssim\int_{I_1}\frac{|z-\bar{z}|\cdot\bar{r}^{\frac 12-\frac 1s}}{r^{{\frac52}-\frac 1q}}
\left(\frac{r\bar{r}}{(r-\bar{r})^2+(z-\bar{z})^2}\right)^{\frac76}
\cdot\bar{r}^{\frac 1s}|\eta(\bar{r},\bar{z})|d\bar{r} d\bar{z}\\
&\lesssim\int_{I_1}\left(\frac{\bar{r}}{r}\right)^{\frac32-\frac12(\frac 1s+\frac 1q)}
\left(\frac {1}{|(\bar{r},\bar{z})-(r,z)|}\right)^{\frac 43}\cdot\bar{r}^{\frac 1s}|\eta(\bar{r},\bar{z})|d\bar{r} d\bar{z}\\
&\lesssim\int_{\Omega}
\left(\frac {1}{|(\bar{r},\bar{z})-(r,z)|}\right)^{\frac 43}\cdot\bar{r}^{\frac 1s}|\eta(\bar{r},\bar{z})|d\bar{r} d\bar{z},
\end{split}\end{equation*}
from which, and Hardy-Littlewood-Sobolev inequality, we infer
\begin{equation}\label{3.4}\begin{split}
&\bigl\|\int_{I_1}\frac{z-\bar{z}}{\pi}\frac{\bar{r}^{\frac 12}}{r^{\frac 52-\frac 1q}}
F'(\xi^2)\eta(\bar{r},\bar{z})d\bar{r} d\bar{z}\bigr\|_{L^q(\Omega;drdz)}\\
&\lesssim \bigl\||(r,z)|^{-\frac 43}\bigr\|_{L^{\frac32,\infty}(\Omega)}  \|r^{\frac 1s}\eta\|_{L^{s}(\Omega)}\\
&\thicksim\|\eta\|_{L^{s}}\lesssim\|\eta\|_{L^p}^{\frac{p-1}{2}+\frac{3p}{2q}}\|\eta\|_{L^{3p}}^{1-\frac{p-1}{2}-\frac{3p}{2q}}.
\end{split}\end{equation}

Note that in the region $I_2,$ there holds $\bar{r}\leq 2|\bar{r}-r|$.
Thus by using
Lemma \ref{lem2.2.1}, we get
\begin{equation*}\begin{split}
\Bigl|\int_{I_{2}}\frac{z-\bar{z}}{\pi}\frac{\bar{r}^{\frac 12}}{r^{\frac 52-\frac 1q}}
F'(\xi^2)\eta(\bar{r},\bar{z})d\bar{r} d\bar{z}\Bigr|&\lesssim\int_{I_2}\frac{|z-\bar{z}|\cdot\bar{r}^{\frac 12}}{r^{{\frac52}-\frac 1q}}
\left(\frac{r\bar{r}}{(r-\bar{r})^2+(z-\bar{z})^2}\right)^{\frac52-\frac 1q}
\cdot|\eta(\bar{r},\bar{z})|d\bar{r} d\bar{z}\\
&\lesssim \int_{I_{2}}\left(\frac {1}{|(\bar{r},\bar{z})-(r,z)|}\right)^{1-\frac 1q}
|\eta(\bar{r},\bar{z})|d\bar{r} d\bar{z}.
\end{split}\end{equation*}

To proceed further, for any given $R>0,$ we split $I_2=I_{21}\cup I_{22}$ with
$$I_{21}=I_2\cap\bigl\{(\bar{r},\bar{z})\in\Omega\big||(\bar{r},\bar{z})-(r,z)|\geq R\bigr\}, \quad
I_{22}=I_2\cap\bigl\{(\bar{r},\bar{z})\in\Omega\big||(\bar{r},\bar{z})-(r,z)|< R\bigr\}.$$

Then we get, by applying Young's inequality, that
\begin{equation*}\begin{split}
&\bigl\|\int_{I_{21}}\frac{z-\bar{z}}{\pi}\frac{\bar{r}^{\frac 12}}{r^{\frac 52-\frac 1q}}
F'(\xi^2)\eta(\bar{r},\bar{z})d\bar{r} d\bar{z}\bigr\|_{L^q(\Omega)}\\
&\leq\bigl\|\int_{I_{21}}\left(\frac {1}{|(\bar{r},\bar{z})-(r,z)|}\right)^{1+\frac{1}{p}-\frac 1q}
\cdot\bar{r}^{\frac{1}{p}}|\eta(\bar{r},\bar{z})|d\bar{r} d\bar{z}\bigr\|_{L^q(\Omega)}\\
&\lesssim\bigl\|r^{\frac{1}{p}}\eta\bigr\|_{L^{p}(\Omega)}\Bigl(\int_R^\infty\rho^{\frac{2/q-{2}/{p}}{1-{1}/{p}+1/q}}
\rho d\rho\Bigr)^{1-\frac{1}{p}+\frac 1q}\thicksim R^{1-\frac 3p+\frac 3q}\|\eta\|_{L^{p}}.
\end{split}\end{equation*}

For the integral on $I_{22}$,
in the case $q>3p$, by applying Young's inequality, we get
\begin{equation*}\begin{split}
&\bigl\|\int_{I_{22}}\frac{z-\bar{z}}{\pi}\frac{\bar{r}^{\frac 12}}{r^{\frac 52-\frac 1q}}
F'(\xi^2)\eta(\bar{r},\bar{z})d\bar{r} d\bar{z}\bigr\|_{L^q(\Omega)}\\
&\leq\bigl\|\int_{I_{22}}\left(\frac {1}{|(\bar{r},\bar{z})-(r,z)|}\right)^{1+\frac{1}{3p}-\frac 1q}
\cdot\bar{r}^{\frac{1}{3p}}|\eta(\bar{r},\bar{z})|d\bar{r} d\bar{z}\bigr\|_{L^q(\Omega)}\\
&\lesssim\bigl\|r^{\frac{1}{3p}}\eta\bigr\|_{L^{3p}(\Omega)}\Bigl(\int_0^R\rho^{\frac{2/q-{2}/{3p}}{1-{1}/{3p}+1/q}}
 d\rho\Bigr)^{1-\frac{1}{3p}+\frac 1q}\thicksim R^{1-\frac 1p+\frac 3q}\|\eta\|_{L^{3p}}.
\end{split}\end{equation*}
While in the case $\frac{3p}{3-p}<q\leq 3p$, another use of Young's inequality gives
\begin{equation*}\begin{split}
&\bigl\|\int_{I_{22}}\frac{z-\bar{z}}{\pi}\frac{\bar{r}^{\frac 12}}{r^{\frac 52-\frac 1q}}
F'(\xi^2)\eta(\bar{r},\bar{z})d\bar{r} d\bar{z}\bigr\|_{L^q(\Omega)}\\
&\leq\bigl\|\int_{I_{22}}\left(\frac {1}{|(\bar{r},\bar{z})-(r,z)|}\right)^{1+\frac{3-p}{3p}-\frac 1q}
\cdot\bar{r}^{\frac{3-p}{3p}}|\eta(\bar{r},\bar{z})|d\bar{r} d\bar{z}\bigr\|_{L^q(\Omega)}\\
&\lesssim\bigl\|r^{\frac{3-p}{3p}}\eta\bigr\|_{L^{\frac{3p}{3-p}}(\Omega)}\Bigl(\int_0^R\rho^{\frac{2/q-{2(3-p)}/{3p}}{1-{(3-p)}/{3p}+1/q}}
 d\rho\Bigr)^{1-\frac{3-p}{3p}+\frac 1q}\\
&\thicksim R^{2-\frac 3p+\frac 3q}\|\eta\|_{L^{\frac{3p}{3-p}}}
\lesssim R^{1-\frac 3p+\frac 3q}\|\eta\|_{L^{p}}+R^{1-\frac 1p+\frac 3q}\|\eta\|_{L^{3p}}.
\end{split}\end{equation*}
As a result, it comes out
\beno
&\bigl\|\int_{I_{2}}\frac{z-\bar{z}}{\pi}\frac{\bar{r}^{\frac 12}}{r^{\frac 52-\frac 1q}}
F'(\xi^2)\eta(\bar{r},\bar{z})d\bar{r} d\bar{z}\bigr\|_{L^q(\Omega)}\lesssim
R^{1-\frac 3p+\frac 3q}\|\eta\|_{L^{p}}+R^{1-\frac 1p+\frac 3q}\|\eta\|_{L^{3p}}.
\eeno
Taking $R=\Bigl(\frac{\|\eta\|_{L^p}}{\|\eta\|_{L^{3p}}}\Bigr)^{\frac p2}$ in the above inequality gives rise to
\begin{equation}\label{3.5}
\bigl\|\int_{I_{2}}\frac{z-\bar{z}}{\pi}\frac{\bar{r}^{\frac 12}}{r^{\frac 52-\frac 1q}}
F'(\xi^2)\eta(\bar{r},\bar{z})d\bar{r} d\bar{z}\bigr\|_{L^q(\Omega)}
\lesssim\|\eta\|_{L^p}^{\frac{p-1}{2}+\frac{3p}{2q}}\|\eta\|_{L^{3p}}^{1-\frac{p-1}{2}-\frac{3p}{2q}}.
\end{equation}
Due to
$\bigl\|r^{\frac1q}\frac{u^r}{r}\bigr\|_{L^q(\Omega)}=\bigl\|\frac{u^r}{r}\bigr\|_{L^q},$ \eqref{3.4} together with \eqref{3.5}
ensures \eqref{3.1} for any $q\in \bigl]\frac{3p}{3-p},\infty\bigr[$.

The end-point
case when $q=\infty$ follows exactly along the same line. This completes the proof of Proposition \ref{lem2.6.1}.
\end{proof}

\subsection{The estimates of the solution operator $S(t)$}

The goal of this subsection is to present the estimates of the solution operator $S(t),$ which will be used in  Section \ref{sec3}.

\begin{prop}\label{lemsemi}
{\sl  Let $S(t)$ the solution operator given by \eqref{lemexpresion1}. Then this  family $\bigl(S(t)\bigr)_{t\geq0}$ are strongly continuous semigroups
of bounded linear operators in $L^m(\Omega)$ for any $m\in[1,\infty[$. Moreover, for
$1\leq p\leq q\leq \infty$, there holds

\begin{itemize}
\item[(1)] For any $\alpha,~\beta$ satisfying $\alpha+\beta\leq0,~a\geq-1$ and $\beta\geq-1$,
and any $f=(f^r,f^z)\in L^p(\Omega)^2$, there holds
\begin{equation}\label{semi1}
\|r^\alpha S(t)\div_* (r^{\beta}f)\|_{L^q(\Omega)}\leq\frac{C}{t^{\frac 12-\frac{\alpha+\beta}{2}+\frac 1p-\frac 1q}}\|f\|_{L^p(\Omega)}.
\end{equation}
In particular, taking $\alpha=\beta=0$, we have
\begin{equation}\label{semi2}
\|S(t)\div_* f\|_{L^q(\Omega)}\leq\frac{C}{t^{\frac 12+\frac 1p-\frac 1q}}\|f\|_{L^p(\Omega)}.
\end{equation}

\item[(2)] For any $\alpha,~\beta$ satisfying $\alpha+\beta\leq1,~\alpha\geq-1$ and $\beta\geq-1$, and any $g\in L^p(\Omega)$, there holds
\begin{equation}\label{semi3}
\|r^\alpha S(t) (r^{\beta-1} g)\|_{L^q(\Omega)}
\leq\frac{C}{t^{\frac 12-\frac{\alpha+\beta}{2}+\frac 1p-\frac 1q}}\|g\|_{L^p(\Omega)},
\end{equation}
In particular, taking $\alpha=0,~\beta=1$, and $\alpha=\beta=0$, we have
\begin{equation}\label{semi4}
\|S(t) g\|_{L^q(\Omega)}
\leq\frac{C}{t^{\frac 1p-\frac 1q}}\|g\|_{L^p(\Omega)},\quad
\|S(t) \bigl(\frac{g}{r}\bigr)\|_{L^q(\Omega)}
\leq\frac{C}{t^{\frac12+\frac 1p-\frac 1q}}\|g\|_{L^p(\Omega)}.
\end{equation}

\item[(3)] For any $\delta\in\bigl[-1,\frac12\bigr],~m\in[1,\infty[$,
and any $g$ satisfying $r^\delta g\in L^m(\Omega)$, we have
\begin{equation}\label{semicontinuous}
\|r^\delta S(t) g-r^\delta g\|_{L^m(\Omega)}\rightarrow 0,\quad \mbox{as}\ t\rightarrow 0.
\end{equation}

\end{itemize}}
\end{prop}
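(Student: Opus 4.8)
The plan is to extract everything from the explicit kernel representation \eqref{lemexpresion1} and the bounds on $H$ from Corollary \ref{corH}. Write $\bigl(S(t)\omega_0\bigr)(r,z)=\int_\Omega K_t(r,z,\bar r,\bar z)\,\omega_0(\bar r,\bar z)\,d\bar r\,d\bar z$ with
$K_t(r,z,\bar r,\bar z)=\frac{1}{4\pi t}\frac{\bar r^{1/2}}{r^{1/2}}H\!\bigl(\frac{t}{r\bar r}\bigr)\exp\!\bigl(-\frac{(r-\bar r)^2+(z-\bar z)^2}{4t}\bigr)$.
The first task is the semigroup property: $S(t+s)=S(t)S(s)$ and strong continuity. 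Both follow because $S(t)$ is the heat-type propagator associated with the selfadjoint operator $-(\partial_r^2+\partial_z^2+\frac1r\partial_r-\frac1{r^2})$ on $\Omega$ with the Dirichlet-type condition at $r=0$; equivalently one can note that $u^\theta/r \mapsto$ solves an honest $5$-dimensional heat equation under the usual change of variables, so the composition law is inherited and $S(t)f\to f$ in $L^m(\Omega)$ for $f$ in a dense class (e.g. $C_c^\infty(\Omega)$) and then for all $f\in L^m$ by the uniform bound $\|S(t)\|_{L^m\to L^m}\le C$, which itself is the $p=q=m$, $\alpha=\beta=0$ case of \eqref{semi4}. So strong continuity reduces to the $L^p$–$L^q$ bounds plus a density argument.

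For the main estimates \eqref{semi1} and \eqref{semi3}, the strategy is a Schur test / Young-type argument on the kernel, exploiting two facts: (i) by Corollary \ref{corH}, $H(\tau)\lesssim \tau^{-\gamma}$ for any $\gamma\in[0,3/2]$, which converts the factor $H\!\bigl(\frac{t}{r\bar r}\bigr)$ into $\bigl(\frac{r\bar r}{t}\bigr)^{\gamma}$ and lets us trade powers of $r,\bar r$ against powers of $t$; and (ii) the Gaussian factor $\exp\!\bigl(-\frac{(r-\bar r)^2+(z-\bar z)^2}{4t}\bigr)$ together with any polynomial weight in $|(r,z)-(\bar r,\bar z)|/\sqrt t$ is an $L^1$-normalized (up to a $t$-power) convolution kernel in $\R^2$. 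For \eqref{semi1} one integrates by parts to move $\div_*$ onto the kernel, producing an extra factor of size $t^{-1/2}$ times a Gaussian (the derivative of a Gaussian is still Gaussian-dominated); then on the region $\bar r\lesssim r$ one bounds $(\bar r/r)^{1/2+\beta}\cdot(\text{stuff})$ using $\beta\ge-1$, while on $\bar r\gtrsim r$ one uses $|r-\bar r|\sim\bar r$ and the decay of $H$ to gain the needed powers; choosing $\gamma$ appropriately and splitting $\Omega$ à la \eqref{3.3} reduces the estimate to: a weighted Hardy–Littlewood–Sobolev inequality for the far/singular pieces (giving sharp homogeneity) and a plain $\R^2$-Young inequality $\|K_t * \cdot\|_{L^q}\le \|K_t\|_{L^s}\|\cdot\|_{L^p}$, $\frac1s=1+\frac1q-\frac1p$, for the Gaussian localized piece, where $\|K_t\|_{L^s}\sim t^{-(\text{scaling power})}$ by the substitution $y=(r-\bar r,z-\bar z)/\sqrt t$. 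A dimensional-analysis check fixes the exponent $\frac12-\frac{\alpha+\beta}{2}+\frac1p-\frac1q$: under the scaling \eqref{scale} the left side of \eqref{semi1} scales like $\lambda^{\alpha+\beta+1+2/p}$ (one $\div_*$, the weights, the $L^p$ norm in $2$D), the right side like $\lambda^{2/p}\cdot\lambda^{2\cdot(\text{power of }t)}$ read off from $t\mapsto\lambda^2 t$, forcing the stated power; this is exactly the role of the constraints $\alpha+\beta\le0$ (resp. $\le1$) and $\alpha,\beta\ge-1$ — they guarantee the relevant weighted convolution bound is not at an endpoint that fails. Estimate \eqref{semi3} is handled identically: a factor $r^{\beta-1}$ with $\beta\ge-1$ behaves, after the $(\bar r/r)$ bookkeeping, like the $\div_*$ case shifted by one power, which is why the time exponent in \eqref{semi3} matches that of \eqref{semi1}. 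The special cases \eqref{semi2} and \eqref{semi4} are just $\alpha=\beta=0$ and $(\alpha,\beta)\in\{(0,1),(0,0)\}$.

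For part (3), \eqref{semicontinuous}, I would argue by density exactly as in the classical heat-semigroup proof of strong continuity. First, using \eqref{semi3} with $p=q=m$, $\alpha=\delta$, $\beta=1-\delta$ (legitimate since $\delta\in[-1,1/2]$ forces $\alpha\ge-1$, $\beta\ge-1$, $\alpha+\beta=1\le1$), one gets the uniform bound $\|r^\delta S(t)g\|_{L^m(\Omega)}\le C\|r^\delta g\|_{L^m(\Omega)}$ for all $t>0$. Hence the map $g\mapsto r^\delta S(t)g - r^\delta g$ is uniformly bounded on the space $\{g: r^\delta g\in L^m\}$, so it suffices to prove \eqref{semicontinuous} for $g$ in a dense subclass, say $r^\delta g\in C_c(\Omega)$ with $g$ itself smooth and supported away from $r=0$; there the representation \eqref{lemexpresion1} together with $H(\tau)\to1$ as $\tau\to0$ (item (ii) of the lemma preceding Corollary \ref{corH}) and the fact that $\frac{1}{4\pi t}\exp(-\frac{|\cdot|^2}{4t})$ is an approximate identity in $\R^2$ gives pointwise and then $L^m$ convergence $r^\delta S(t)g\to r^\delta g$. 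The main obstacle throughout is the bookkeeping in (2): one must split $\Omega$ according to $\bar r\lessgtr 2r$ (and possibly further by $|(\bar r,\bar z)-(r,z)|\gtrless R\sqrt t$), choose the exponent $\gamma$ in $H(\tau)\lesssim\tau^{-\gamma}$ differently on each piece so that all weight powers of $r,\bar r$ are absorbed while the resulting $\R^2$-convolution kernel stays integrable to the right Lebesgue exponent, and verify the homogeneities line up — the constraints $\alpha+\beta\le 0$ (or $\le1$) and $\alpha,\beta\ge-1$ are precisely what make every one of these sub-estimates hold simultaneously, and checking that is the technical heart of the proof.
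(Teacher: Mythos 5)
Your proposal follows essentially the same strategy as the paper's proof: integrate by parts to move $\div_*$ onto the kernel, use Corollary~\ref{corH} to absorb the weight $\bar{r}^{1/2+\beta}/r^{1/2-\alpha}$ by choosing the exponent in $H(\tau)\lesssim\tau^{-\gamma}$ and $H'(\tau)\lesssim\tau^{-\gamma'}$ differently according to the size of $\bar{r}$ relative to $r$, so that the full kernel is dominated by $t^{-1/2+(\alpha+\beta)/2}\cdot t^{-1}\exp(-|x-\bar{x}|^2/5t)$, then conclude by Young's inequality in $\R^2$; part~(3) is in both cases an approximate-identity argument in the $\sqrt{t}$-rescaled variables combined with a uniform bound and dominated convergence (your density formulation and the paper's direct Lebesgue argument are interchangeable). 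One small clarification: once the kernel is Gaussian-dominated, Young's inequality alone carries every region, so the Hardy--Littlewood--Sobolev step you float for the ``far/singular pieces'' is superfluous here --- HLS is needed only in Proposition~\ref{lem2.6.1}, where the Biot--Savart kernel has pure power-law decay with no Gaussian factor --- and keeping it would only complicate the bookkeeping.
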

\begin{proof}
The boundedness of the semigroup $\bigl(S(t)\bigr)_{t\geq0}$ is shown in \eqref{semi4}.
Then in order to prove $\bigl(S(t)\bigr)_{t\geq0}$ is strongly continuous in
$L^m(\Omega)$ for any $m\in[1,\infty[$, we only need to verify the continuity at the origin,
which is a direct consequence of \eqref{semicontinuous} (with $\delta=0$). Hence it remains
to prove the estimates (\ref{semi1}-\ref{semi4}), which we handle term by term below.

(1) By integration by parts, we write
\begin{equation}\begin{split}\label{semi5}
r^\alpha\bigl(& S(t)\div_* (r^{\beta}f)\bigr)(r,z)\\
&=\frac{1}{4\pi t}
\int_\Omega \frac{\bar{r}^{\frac 12+\beta}}{r^{\frac 12-\alpha}}\exp\Bigl(-\frac{(r-\bar{r})^2+(z-\bar{z})^2}{4t}\Bigr)
\cdot\bigl(A_r f^r+A_z f^z\bigr)(\bar{r},\bar{z})\,d\bar{r}\,d\bar{z},
\end{split}\end{equation}
where
$$A_r(\bar{r},\bar{z})=\frac{t}{r\bar{r}^2}{H}'\bigl(\frac{t}{r\bar{r}}\bigr)
-\bigl(\frac{1}{2\bar{r}}+\frac{r-\bar{r}}{2t}\bigr){H}\bigl(\frac{t}{r\bar{r}}\bigr),\quad
A_z(\bar{r},\bar{z})=-\frac{z-\bar{z}}{2t}{H}\bigl(\frac{t}{r\bar{r}}\bigr).$$

\no$\bullet$ Let us first handle  the  term $\bigl|A_r+\frac{r-\bar{r}}{2t}{H}\bigl(\frac{t}{r\bar{r}}\bigr)\bigr|$ .

If $(\alpha,\beta)\in\Omega_1\eqdefa\{(\alpha,\beta)|0\leq\frac12-\frac{\alpha+\beta}{2}\leq\frac32
,~0\leq\frac12-\beta\leq\frac32,~\beta\leq\alpha\}$, we can divide the integral area into
$\{\bar{r}\geq \frac r2\}$ and $\{\bar{r}< \frac r2\}$.
When $\bar{r}\geq \frac r2$, we can deduce from Corollary \ref{corH} that
\begin{equation}\begin{split}\label{semi6}
&\frac{\bar{r}^{\frac 12+\beta}}{r^{\frac 12-\alpha}}
\exp\Bigl(-\frac{(r-\bar{r})^2+(z-\bar{z})^2}{4t}\Bigr)
\bigl|A_r+\frac{r-\bar{r}}{2t}{H}\bigl(\frac{t}{r\bar{r}}\bigr)\bigr|\\
&\lesssim \Bigl(\frac{t}{r^{\frac 32-\alpha}\bar{r}^{\frac 32-\beta}}{H}'\bigl(\frac{t}{r\bar{r}}\bigr)
+\frac{1}{r^{\frac 12-\alpha}\bar{r}^{\frac 12-\beta}}{H}\bigl(\frac{t}{r\bar{r}}\bigr)\Bigr)\cdot
\exp\Bigl(-\frac{(r-\bar{r})^2+(z-\bar{z})^2}{4t}\Bigr)\\
&\lesssim \Bigl(\frac{t}{r^{\frac 32-\alpha}\bar{r}^{\frac 32-\beta}}\bigl|\frac{r\bar{r}}{t}\bigr|^{\frac32-\frac{\alpha+\beta}{2}}
+\frac{1}{r^{\frac 12-\alpha}\bar{r}^{\frac 12-\beta}}\bigl|\frac{r\bar{r}}{t}\bigr|^{\frac12-\frac{\alpha+\beta}{2}}\Bigr)
\cdot\exp\Bigl(-\frac{(r-\bar{r})^2+(z-\bar{z})^2}{4t}\Bigr)\\
&\lesssim \frac{1}{t^{\frac12-\frac{\alpha+\beta}{2}}}\cdot\exp\Bigl(-\frac{(r-\bar{r})^2+(z-\bar{z})^2}{5t}\Bigr).
\end{split}\end{equation}
And when $\bar{r}<\frac r2$, there then holds ${r}<2|\bar{r}-r|$,
another use of Corollary \ref{corH} gives
\begin{equation}\begin{split}\label{semi7}
&\frac{\bar{r}^{\frac 12+\beta}}{r^{\frac 12-\alpha}}
\exp\Bigl(-\frac{(r-\bar{r})^2+(z-\bar{z})^2}{4t}\Bigr)
\bigl|A_r+\frac{r-\bar{r}}{2t}{H}\bigl(\frac{t}{r\bar{r}}\bigr)\bigr|\\
&\lesssim \Bigl(\frac{t}{r^{\frac 32-\alpha}\bar{r}^{\frac 32-\beta}}\bigl|\frac{r\bar{r}}{t}\bigr|^{\frac32-\beta}
+\frac{1}{r^{\frac 12-\alpha}\bar{r}^{\frac 12-\beta}}\bigl|\frac{r\bar{r}}{t}\bigr|^{\frac12-\beta}\Bigr)
\exp\Bigl(-\frac{(r-\bar{r})^2+(z-\bar{z})^2}{4t}\Bigr)\\
&\lesssim \frac{{r}^{\alpha-\beta}}{t^{\frac12-\beta}}\cdot
\bigl(\frac{5t}{(r-\bar{r})^2+(z-\bar{z})^2}\bigr)^{\frac{\alpha-\beta}2}\cdot\exp\Bigl(-\frac{(r-\bar{r})^2+(z-\bar{z})^2}{5t}\Bigr)\\
&\lesssim \frac{1}{t^{\frac12-\frac{\alpha+\beta}{2}}}\cdot\exp\Bigl(-\frac{(r-\bar{r})^2+(z-\bar{z})^2}{5t}\Bigr).
\end{split}\end{equation}

If $(\alpha,\beta)\in\Omega_2\eqdefa\{(\alpha,\beta)|0\leq\frac12-\frac{\alpha+\beta}{2}\leq\frac32
,~0\leq\frac12-\alpha\leq\frac32,~\alpha\leq \beta\}$, we divide the integral area in a different way as
$\{\bar{r}\leq 2r\}$ and $\{\bar{r}>2r\}$. Similar to the previous estimates,
when $\bar{r}\leq 2r$, we have
\begin{equation}\begin{split}\label{semi8}
&\frac{\bar{r}^{\frac 12+\beta}}{r^{\frac 12-\alpha}}
\exp\Bigl(-\frac{(r-\bar{r})^2+(z-\bar{z})^2}{4t}\Bigr)
\bigl|A_r+\frac{r-\bar{r}}{2t}{H}\bigl(\frac{t}{r\bar{r}}\bigr)\bigr|\\
&\lesssim \Bigl(\frac{t}{r^{\frac 32-\alpha}\bar{r}^{\frac 32-\beta}}\bigl|\frac{r\bar{r}}{t}\bigr|^{\frac32-\frac{\alpha+\beta}{2}}
+\frac{1}{r^{\frac 12-\alpha}\bar{r}^{\frac 12-\beta}}\bigl|\frac{r\bar{r}}{t}\bigr|^{\frac12-\frac{\alpha+\beta}{2}}\Bigr)
\cdot\exp\Bigl(-\frac{(r-\bar{r})^2+(z-\bar{z})^2}{4t}\Bigr)\\
&\lesssim \frac{1}{t^{\frac12-\frac{\alpha+\beta}{2}}}\cdot\exp\Bigl(-\frac{(r-\bar{r})^2+(z-\bar{z})^2}{5t}\Bigr).
\end{split}\end{equation}
And when $\bar{r}> 2r$, there then holds $\bar{r}<2|\bar{r}-r|$, thus we deduce
\begin{equation}\begin{split}\label{semi9}
&\frac{\bar{r}^{\frac 12+\beta}}{r^{\frac 12-\alpha}}
\exp\Bigl(-\frac{(r-\bar{r})^2+(z-\bar{z})^2}{4t}\Bigr)
\bigl|A_r+\frac{r-\bar{r}}{2t}{H}\bigl(\frac{t}{r\bar{r}}\bigr)\bigr|\\
&\lesssim \Bigl(\frac{t}{r^{\frac 32-\alpha}\bar{r}^{\frac 32-\beta}}\bigl|\frac{r\bar{r}}{t}\bigr|^{\frac32-\alpha}
+\frac{1}{r^{\frac 12-\alpha}\bar{r}^{\frac 12-\beta}}\bigl|\frac{r\bar{r}}{t}\bigr|^{\frac12-\alpha}\Bigr)
\exp\Bigl(-\frac{(r-\bar{r})^2+(z-\bar{z})^2}{4t}\Bigr)\\
&\lesssim \frac{\bar{r}^{\beta-\alpha}}{t^{\frac12-\alpha}}\cdot
\bigl(\frac{5t}{(r-\bar{r})^2+(z-\bar{z})^2}\bigr)^{\frac{\beta-\alpha}2}\cdot\exp\Bigl(-\frac{(r-\bar{r})^2+(z-\bar{z})^2}{5t}\Bigr)\\
&\lesssim \frac{1}{t^{\frac12-\frac{\alpha+\beta}{2}}}\cdot\exp\Bigl(-\frac{(r-\bar{r})^2+(z-\bar{z})^2}{5t}\Bigr).
\end{split}\end{equation}

Thus combining the estimates (\ref{semi1}-\ref{semi4}), we conclude that whenever $(\alpha,\beta)\in\Omega_1\bigcup\Omega_2$,
i.e. $\alpha,~\beta$ satisfy $\alpha+\beta\leq 1$,~$\alpha\geq-1$ and $\beta\geq-1$, there holds
\begin{equation}\begin{split}\label{semi10}
\frac{\bar{r}^{\frac 12+\beta}}{r^{\frac 12-\alpha}}
&\exp\Bigl(-\frac{(r-\bar{r})^2+(z-\bar{z})^2}{4t}\Bigr)
\cdot\bigl|A_r+\frac{r-\bar{r}}{2t}{H}\bigl(\frac{t}{r\bar{r}}\bigr)\bigr|\\
&\qquad\qquad\qquad\qquad\lesssim \frac{1}{t^{\frac12-\frac{\alpha+\beta}{2}}}\cdot\exp\Bigl(-\frac{(r-\bar{r})^2+(z-\bar{z})^2}{5t}\Bigr).
\end{split}\end{equation}

\no$\bullet$ For $|A_z|$ term in the integrand \eqref{semi5}.
When $\bar{r}>2r$ or $\bar{r}\leq \frac r2$, there then holds $\bar{r}+r<3|\bar{r}-r|$.
If in addition $\alpha+\beta\geq-4$,~$\alpha\geq-1$ and $\beta\geq-2$, there then exists a positive constant $\gamma$ so that
$\max\bigl\{0,\frac12-\alpha,-\frac12-\beta,-\frac{1+\alpha+\beta}{2}\bigr\}\leq \gamma\leq\frac32$.
Then we deduce from Corollary \ref{corH} that
\begin{align*}
&\frac{\bar{r}^{\frac 12+\beta}}{r^{\frac 12-\alpha}}
\exp\Bigl(-\frac{(r-\bar{r})^2+(z-\bar{z})^2}{4t}\Bigr)
\Bigl(|A_z|+\bigl|\frac{r-\bar{r}}{2t}{H}\bigl(\frac{t}{r\bar{r}}\bigr)\bigr|\Bigr)\\
&\lesssim \frac{\bar{r}^{\frac 12+\beta}}{r^{\frac 12-\alpha}}\frac{|r-\bar{r}|+|z-\bar{z}|}{t}\bigl|\frac{r\bar{r}}{t}\bigr|^{\gamma}
\bigl(\frac{5t}{(r-\bar{r})^2+(z-\bar{z})^2}\bigr)^{\frac{1+2\gamma+\alpha+\beta}2}\exp\Bigl(-\frac{(r-\bar{r})^2+(z-\bar{z})^2}{5t}\Bigr)\\
&\lesssim \frac{1}{t^{\frac12-\frac{\alpha+\beta}{2}}}\exp\Bigl(-\frac{(r-\bar{r})^2+(z-\bar{z})^2}{5t}\Bigr).
\end{align*}
And when $\frac r2\leq \bar{r}\leq 2r$, if in addition, $-3\leq\alpha+\beta\leq 0$, we have
\begin{align*}
&\frac{\bar{r}^{\frac 12+\beta}}{r^{\frac 12-\alpha}}
\exp\Bigl(-\frac{(r-\bar{r})^2+(z-\bar{z})^2}{4t}\Bigr)
\Bigl(|A_z|+\bigl|\frac{r-\bar{r}}{2t}{H}\bigl(\frac{t}{r\bar{r}}\bigr)\bigr|\Bigr)\\
&\lesssim \frac{\bar{r}^{\frac 12+\beta}}{r^{\frac 12-\alpha}}
\frac{|r-\bar{r}|+|z-\bar{z}|}{t}\bigl|\frac{r\bar{r}}{t}\bigr|^{-\frac{\alpha+\beta}{2}}
\bigl(\frac{5t}{(r-\bar{r})^2+(z-\bar{z})^2}\bigr)^{\frac12}\exp\Bigl(-\frac{(r-\bar{r})^2+(z-\bar{z})^2}{5t}\Bigr)\\
&\lesssim \frac{1}{t^{\frac12-\frac{\alpha+\beta}{2}}}\cdot\exp\Bigl(-\frac{(r-\bar{r})^2+(z-\bar{z})^2}{5t}\Bigr).
\end{align*}

Therefore as long as $\alpha+\beta\leq 0$,~$\alpha\geq-1$ and $\beta\geq-2$, there holds
\begin{equation}\begin{split}\label{semi11}
\frac{\bar{r}^{\frac 12+\beta}}{r^{\frac 12-\alpha}}&
\exp\Bigl(-\frac{(r-\bar{r})^2+(z-\bar{z})^2}{4t}\Bigr)
\Bigl(|A_z|+\bigl|\frac{r-\bar{r}}{2t}{H}\bigl(\frac{t}{r\bar{r}}\bigr)\bigr|\Bigr)\\
&\qquad\qquad\qquad\qquad\lesssim \frac{1}{t^{\frac12-\frac{\alpha+\beta}{2}}}\cdot\exp\Bigl(-\frac{(r-\bar{r})^2+(z-\bar{z})^2}{5t}\Bigr).
\end{split}\end{equation}

By combining \eqref{semi10} with \eqref{semi11}, we achieve
\begin{equation*}\label{semi12}
\frac{\bar{r}^{\frac 12+\beta}}{r^{\frac 12-\alpha}}
\exp\Bigl(-\frac{(r-\bar{r})^2+(z-\bar{z})^2}{4t}\Bigr)
\bigl(|A_r|+|A_z|\bigr)\lesssim \frac{1}{t^{\frac12-\frac{\alpha+\beta}{2}}}\cdot\exp\Bigl(-\frac{(r-\bar{r})^2+(z-\bar{z})^2}{5t}\Bigr),
\end{equation*}
provided $\alpha+\beta\leq0,~a\geq-1$ and $\beta\geq-1$.
And then \eqref{semi1} follows from  \eqref{semi5} and Young's inequality in two space dimension.

(2) It follows from the proof of \eqref{semi10} that
\begin{equation*}
\frac{1}{r^{\frac 12-\alpha}\bar{r}^{\frac 12-\beta}}
\bigl|{H}\bigl(\frac{t}{r\bar{r}}\bigr)\bigr|
\exp\Bigl(-\frac{(r-\bar{r})^2+(z-\bar{z})^2}{4t}\Bigr)
\lesssim \frac{1}{t^{\frac12-\frac{\alpha+\beta}{2}}}\exp\Bigl(-\frac{(r-\bar{r})^2+(z-\bar{z})^2}{5t}\Bigr),
\end{equation*}
whenever $\alpha+\beta\leq1,~\alpha\geq-1$ and $\beta\geq-1$.
Then by virtue of \eqref{lemexpresion1},  we get, by applying Young's inequality, that there holds \eqref{semi3}.

(3) In view of \eqref{lemexpresion1},
we get, by using  changes of variables that
\begin{equation}\label{semi13}
\bigl(r^\delta S(t) g-r^\delta g\bigr)(r,z)=\frac{1}{4\pi}\int_{\Omega}
\Psi(r,z,\rho,\xi,t)\cdot\exp\Bigl(-\frac{\rho^2+\xi^2}{4}\Bigr)\, d\rho d\xi,
\end{equation}
for all $(r,z)\in\Omega$, and where
\begin{equation}\label{semi14}
\Psi(r,z,\rho,\xi,t)\eqdefa r^\delta\bigl(\frac{r+\sqrt{t}\rho}{r}\bigr)^{\frac12}
{H}\Bigl(\frac{t}{r(r+\sqrt{t}\rho)}\Bigr) g(r+\sqrt{t}\rho,z+\sqrt{t}\xi)-r^\delta g(r,z).
\end{equation}
Notice that $\frac12-\delta\in[0,\frac32]$, applying Corollary \ref{corH} gives
\beno
\begin{split}
&\bigl(\frac{\bar{r}}{r}\bigr)^{\frac12-\delta}
{H}\Bigl(\frac{t}{r\bar{r}}\Bigr)\leq C\quad\mbox{if}\quad \bar{r}\leq2r \andf\\
&\bigl(\frac{\bar{r}}{r}\bigr)^{\frac12-\delta}
{H}\Bigl(\frac{t}{r\bar{r}}\Bigr)\leq C\bigl(\frac{\bar{r}}{r}\bigr)^{\frac12-\delta}
\bigl(\frac{r\bar{r}}{t}\bigr)^{\frac12-\delta}\leq C\bigl(\frac{|\bar{r}-r|^2}{t}\bigr)^{\frac12-\delta}\quad\mbox{if}\quad \bar{r}>2r,
\end{split}
\eeno
which implies for any given  $(\rho,\xi, t)$, we have
\begin{equation}\label{semi15}
\Psi(r,z,\rho,\xi,t)\lesssim (1+\rho^{1-2\delta})\cdot
(r+\sqrt{t}\rho)^\delta g(r+\sqrt{t}\rho,z+\sqrt{t}\xi)-r^\delta g(r,z)\in L^m(\Omega).
\end{equation}
Moreover, noting that $H(t)=1+\mathcal{O}(t),$ as $t\rightarrow0$, it is easy to observe that
$\Psi(r,z,\rho,\xi,t)\rightarrow 0$ as $t\rightarrow 0$.
Then  Lebesgue dominated convergence theorem ensures that
$$\|\Psi(\cdot,\cdot,\rho,\xi,t)\|_{L^m(\Omega)}\rightarrow 0,\ \mbox{as}\ t\rightarrow 0,$$
from which and \eqref{semi13}, another use of Lebesgue's dominated convergence theorem gives
$$
\|r^\delta S(t) g-r^\delta g\|_{L^m(\Omega)}\leq\frac{1}{4\pi}\int_{\Omega}
\|\Psi(\cdot,\cdot,\rho,\xi,t)\|_{L^m(\Omega)}\exp\Bigl(-\frac{\rho^2+\xi^2}{4}\Bigr)\, d\rho d\xi
\rightarrow 0,\ \mbox{as}\ t\rightarrow 0.
$$
This completes the proof of the proposition.
\end{proof}

\section{Local existence of solutions to \eqref{1.1} in critical spaces}\label{sec3}

The purpose of this section is to investigate the local existence and uniqueness of the mild solutions to
\eqref{otut} in the spirit of \cite{GS, Kato}. In view of \eqref{lemexpresion1},
we  rewrite  the systems \eqref{otut} as
\begin{equation}\label{inteeqt}
\left\{
\begin{array}{l}
\displaystyle \ot(t)=S(t)\ot_0-\int_0^t S(t-s)\Bigl(\div_*\bigl(\widetilde{u}(s)\ot(s)\bigr)-\frac{\pa_z|\ut(s)|^2}{r}\Bigr)\,ds,\quad t>0, \\
\displaystyle \ut(t)=S(t)\ut_0-\int_0^t S(t-s)\Bigl(\div_*\bigl(\widetilde{u}(s)\ut(s)\bigr)-\frac{2\ut(s)\ur(s)}{r}\Bigr)ds,\quad t>0,
\end{array}
\right.
\end{equation}

 Now we  present the proof of Theorem \ref{thmlocal}.

\begin{proof}[Proof of Theorem \ref{thmlocal}] The main idea to prove Theorem \ref{thmlocal} is to apply fixed point argument for the integral formulation \eqref{inteeqt}.
Toward this, for any  $T>0$, we introduce the functional space
\begin{equation}\begin{split}\label{fix1}
X_T\eqdefa\bigl\{(\ot,\ut)\in C\bigl(]0,T];L^{\frac 43}&(\Omega)\bigr)\times C\bigl(]0,T];L^{4}(\Omega)
\bigr)\ \big|\\
&r^{-\frac{3}{10}}\ut\in C\bigl(]0,T];L^{2}(\Omega)\bigr) \andf \|(\omega^\theta,\ut)\|_{X_T}<\infty\bigr\},
\end{split}\end{equation}
where
\begin{equation}\label{fix2}
\|(\ot,\ut)\|_{X_T}\eqdefa\sup_{0<t\leq T}\bigl(t^{\frac 14}\|\ot(t)\|_{L^{\frac 43}(\Omega)}
+t^{\frac 14}\|\ut(t)\|_{L^{4}(\Omega)}+t^{\frac 3{20}}\|r^{-\frac{3}{10}}\ut(t)\|_{L^{2}(\Omega)}\bigr).
\end{equation}
For convenience, sometimes we may
abuse the notation $\|\ot\|_{X_T}=\sup\limits_{0<t\leq T}t^{\frac 14}\|\ot\|_{L^{\frac 43}(\Omega)}$,
~$\|\ut\|_{X_T}=\sup\limits_{0<t\leq T}\bigl(t^{\frac 14}\|\ut(t)\|_{L^{4}(\Omega)}+t^{\frac 3{20}}\|r^{-\frac{3}{10}}\ut(t)\|_{L^{2}(\Omega)}\bigr)$,
and $\ot\in X_T$ (resp. $\ut\in X_T$) means that $\|\ot\|_{X_T}<\infty$ (resp. $\|\ut\|_{X_T}<\infty$).

\no$\bullet$ \underline{The estimate of $\ot$ term}

In view of \eqref{semi4}, $S(t)\ot_0\in X_T$ for any $T>0$,
and there exists a universal constant $C_1>0$ such that
for any $T>0$, we have
\beq\label{fix3}
\sup_{0<t\leq T}t^{\frac 14}\|S(t)\ot_0\|_{L^{\frac 43}(\Omega)}
\leq C_1\|\ot_0\|_{L^1(\Omega)}.
\eeq

On the other hand,
since $L^1(\Omega)\bigcap L^{\frac 43}(\Omega)$
is dense in $L^1(\Omega).$ For any $\e>0,$  there exists $\widetilde{\omega}^\theta_0\in L^1(\Omega)\bigcap L^{\frac 43}(\Omega)$
satisfying $\|\widetilde{\omega}^\theta_0-\ot_0\|_{L^1(\Omega)}<\varepsilon.$
Then it follows from \eqref{semi4} that
\beno
\begin{split}
t^{\f14}\|S(t)\ot_0\|_{L^{\f43}(\Omega)}\leq & t^{\f14}\|S(t)(\ot_0-\widetilde{\omega}^\theta_0)\|_{L^{\f43}(\Omega)}+t^{\f14}\|S(t)\widetilde{\omega}^\theta_0\|_{L^{\f43}(\Omega)}\\
\leq & C\bigl(\|\widetilde{\omega}^\theta_0-\ot_0\|_{L^1(\Omega)}+t^{\f14}\|\widetilde{\omega}^\theta_0\|_{L^{\f43}(\Omega)}\bigr)\\
\leq& C\bigl(\e+t^{\f14}\|\widetilde{\omega}^\theta_0\|_{L^{\f43}(\Omega)}\bigr),
\end{split}
\eeno
which implies
\beq \label{fix4}
\lim_{t\to 0}t^{\f14}\|S(t)\ot_0\|_{L^{\f43}(\Omega)}=0.
\eeq
Let us denote $\wt{u}(\ot)(t)$ be velocity field determined by the vorticity $\ot e_\theta$
via the axisymmetric Biot-Savart law \eqref{2.2.1}.
Given $(\ot_1,\ut_1),~(\ot_2,\ut_2)\in X_T$,  for any $t\in [0,T],$  we define the mapping $\mathcal{F}^\omega$ on $X_T\times X_T$ by
\begin{equation}\label{fix5}
\mathcal{F}^\omega\bigl((\ot_1,\ut_1),(\ot_2,\ut_2)\bigr)(t)\eqdefa\int_0^tS(t-s)\cdot\bigl(\div_*(\widetilde{u}(\ot_1)(s)\ot_2(s))
-\frac{\pa_z(\ut_1(s)\ut_2(s))}{r}\bigr)ds.
\end{equation}
Then for any $p\in [1,\frac{10}{7}[,$ we deduce from  \eqref{semi1} (with $~\alpha=0,~\beta=-\frac25$),~\eqref{semi2} and \eqref{2.2.5} that
 for any $t\in]0,T]$,
\begin{equation}\label{fix6}\begin{split}
&t^{1-\frac 1p}\|\mathcal{F}^\omega\bigl((\omega_1,\ut_1),(\omega_2,\ut_2)\bigr)(t)\|_{L^p(\Omega)}\\
&\leq C
t^{1-\frac 1p}\int_0^t\frac{\|\widetilde{u}(\ot_1)(s)\cdot\ot_2(s)\|_{L^1(\Omega)} }{(t-s)^{\frac 32-\frac 1p}}
+\frac{\|r^{-\frac{3}{5}}\ut_1(s)\ut_2(s)\|_{L^1(\Omega)} }{(t-s)^{\frac 32-\frac 1p+\frac15}}\,ds\\
&\leq C\, t^{1-\frac 1p}\int_0^t\frac{\|\widetilde{u}(\ot_1)(s)\|_{L^4(\Omega)}\|\ot_2(s)\|_{L^{\frac 43}(\Omega)}}{(t-s)^{\frac 32-\frac 1p}}
+\frac{\|r^{-\frac{3}{10}}\ut_1(s)\|_{L^2(\Omega)}\|r^{-\frac{3}{10}}\ut_2(s)\|_{L^{2}(\Omega)}}{(t-s)^{\frac 32-\frac 1p+\frac15}}\,ds\\
&\leq C\, t^{1-\frac 1p}\int_0^t \frac{\|\ot_1\|_{X_T}\|\ot_2\|_{X_T}}{(t-s)^{\frac 32-\frac 1p}\cdot s^{\frac 12}}
+\frac{\|\ut_1\|_{X_T}\|\ut_2\|_{X_T}}{(t-s)^{\frac {17}{10}-\frac 1p}\cdot s^{\frac {3}{10}}}\,ds\\
&\leq A^\omega_p\bigl(\|\ot_1\|_{X_T}\|\ot_2\|_{X_T}+\|\ut_1\|_{X_T}\|\ut_2\|_{X_T}\bigr).
\end{split}
\end{equation}

\no$\bullet$ \underline{The estimate of $\ut$ terms}

Thanks to \eqref{semi3} (with $\alpha=-\frac{3}{10},~\beta=\frac{13}{10}$),
by a similar derivation of \eqref{fix3},~\eqref{fix4}, we get
\begin{equation}\begin{split}\label{fix7}
&\sup_{0<t\leq T}\bigl(t^{\frac 14}\|S(t)\ut_0\|_{L^{4}(\Omega)}
+t^{\frac{3}{20}}\|r^{-\frac{3}{10}}S(t)\ut_0\|_{L^{2}(\Omega)}\bigr)\leq C_1\bigl(\|\ut_0\|_{L^{2}(\Omega)}
+\|r^{-\frac{3}{10}}\ut_0\|_{L^{\frac{20}{13}}(\Omega)}\bigr),\\
&\andf\lim_{t\to 0}\bigl(t^{\frac 14}\|S(t)\ut_0\|_{L^{4}(\Omega)}+t^{\frac{3}{20}}\|r^{-\frac{3}{10}}S(t)\ut_0\|_{L^{2}(\Omega)}\bigr)=0,
\end{split}\end{equation}
Given $(\ot_1,\ut_1),~(\ot_2,\ut_2)\in X_T,$ for any $t\in [0,T],$ we define the mapping $\mathcal{F}^u$ on $X_T\times X_T$ by
\begin{equation}\label{fix8}
\mathcal{F}^u\bigl((\omega_1,\ut_1),(\omega_2,\ut_2)\bigr)(t)\eqdefa\int_0^tS(t-s)\Bigl(\div_*\bigl(\widetilde{u}(\ot_1)(s)\cdot\ut_2(s)\bigr)
-\frac{2\ur(\ot_1)(s)\cdot\ut_2(s)}{r}\Bigr)\,ds.
\end{equation}
Then for any $q\in[2,\infty[$, we deduce from  \eqref{semi2}, \eqref{semi4} and \eqref{2.2.5} that
for  any $t\in]0,T]$
\begin{equation}\label{fix9}\begin{split}
&t^{\frac 12-\frac 1q}\|\mathcal{F}^u\bigl((\omega_1,\ut_1),(\omega_2,\ut_2)\bigr)(t)\|_{L^q(\Omega)}\\
&\leq
t^{\frac 12-\frac 1q}\int_0^t\frac{C}{(t-s)^{1-\frac 1q}}\|\widetilde{u}(\ot_1)(s)\cdot\ut_2(s)\|_{L^2(\Omega)}\,ds\\
&\leq t^{\frac 12-\frac 1q}\int_0^t\frac{C}{(t-s)^{1-\frac 1q}}\|\widetilde{u}(\ot_1)(s)\|_{L^4(\Omega)}\|\ut_2(s)\|_{L^4(\Omega)}\,ds\\
&\leq t^{\frac 12-\frac 1q}\int_0^t\frac{C}{(t-s)^{1-\frac 1q}}\|\ot_1(s)\|_{L^{\frac 43}(\Omega)}\|\ut_2(s)\|_{L^4(\Omega)}\,ds\\
&\leq t^{\frac 12-\frac 1q}\int_0^t\frac{C}{(t-s)^{1-\frac 1q}}\frac{\|\ot_1\|_{X_t}\|\ut_2\|_{X_t}}{s^{\frac 12}}\,ds\leq A^u_q\|\ot_1\|_{X_T}\|\ut_2\|_{X_T}.
\end{split}
\end{equation}

Along the same line, for  any $\kappa\in[\frac{20}{13},4[$, we deduce from  \eqref{semi1},~\eqref{semi3} (with $\alpha=-\frac{3}{10},~\beta=\frac{3}{10}$)
and \eqref{2.2.5} that
for  any $t\in]0,T]$
\begin{equation}\label{fix10}\begin{split}
&t^{\frac {13}{20}-\frac 1\kappa}\|r^{-\frac{3}{10}}\mathcal{F}^u\bigl((\omega_1,\ut_1),(\omega_2,\ut_2)\bigr)(t)\|_{L^\kappa(\Omega)}\\
&\leq
t^{\frac {13}{20}-\frac 1\kappa}\int_0^t\frac{C}{(t-s)^{\frac54-\frac 1\kappa}}\|\widetilde{u}(\ot_1)(s)\cdot r^{-\frac{3}{10}}\ut_2(s)\|_{L^{\f43}(\Omega)}\,ds\\
&\leq t^{\frac {13}{20}-\frac 1\kappa}\int_0^t\frac{C}{(t-s)^{\frac54-\frac 1\kappa}}
\|\widetilde{u}(\ot_1)(s)\|_{L^4(\Omega)}\|r^{-\frac{3}{10}}\ut_2(s)\|_{L^2(\Omega)}\,ds\\
&\leq t^{\frac {13}{20}-\frac 1\kappa}\int_0^t\frac{C}{(t-s)^{\frac54-\frac 1\kappa}}\frac{\|\ot_1\|_{X_t}\|\ut_2\|_{X_t}}{s^{\frac 25}}\,ds\leq B^u_k\|\ot_1\|_{X_T}\|\ut_2\|_{X_T}.
\end{split}
\end{equation}

\no$\bullet$ \underline{Fixed point argument}

For $(\ot_1,\ut_1),~(\ot_2,\ut_2)\in X_T$, we consider the following bilinear map
\begin{equation}\label{bilinear}
\mathcal{F}\bigl((\omega_1,\ut_1),(\omega_2,\ut_2)\bigr)\eqdefa
\bigl(\cF^\omega,\cF^u\bigr)\bigl((\omega_1,\ut_1),(\omega_2,\ut_2)\bigr),
\end{equation}
with $\cF^\omega,~\cF^u$ given by \eqref{fix5} and \eqref{fix8} respectively.

By virtue of  \eqref{fix4} and \eqref{fix7}, for any $\ot_0
\in L^1(\Omega),~\ut_0\in L^2(\Omega)$ with $r^{-\frac{3}{10}}\ut_0\in L^{\frac{20}{13}}(\Omega)$, there exists a positive time $T$ such that
\begin{equation}\begin{split}
EL(T)&\eqdefa\sup_{t\in [0,T]}\bigl(t^{\f14}\bigl\|S(t)\ot_0\bigr\|_{L^{\f43}(\Omega)}
+t^{\f14}\|S(t)\ut_0\|_{L^4(\Omega)}+t^{\frac{3}{20}}\|r^{-\frac{3}{10}}S(t)\ut_0\|_{L^{2}(\Omega)}\bigr)\\
& \leq \f1{4\bigl(A^\omega_{4/3}+A^u_4+B^u_2\bigr)},
\end{split}\end{equation}
where the constants $A^\omega_{4/3},~A^u_4$ and $B^u_2$ are determined by \eqref{fix6},~\eqref{fix9} and \eqref{fix10} respectively. Then we deduce from
Lemma \ref{lem4.5}
that \eqref{inteeqt} has a unique solution $(\ot, u^\theta)$ in $X_T$. Furthermore, by virtue of \eqref{fix4} and \eqref{fix7},
for any $\e>0,$ there exists $T_\e>0$ so that
\beno
EL(T_\e)\leq \e.
\eeno
Then Lemma \ref{lem4.5} ensures that
\beno
\bigl\|\bigl(\ot,\ut\bigr)\bigr\|_{X_{T_\e}}\leq 2\e, \eeno
which implies that
\beq\label{fix11}
\lim_{T\to 0} \bigl\|\bigl(\ot,
\ut\bigr)\bigr\|_{X_{T}}=0.
\eeq

Futhermore, for any $T>0$, it follows from \eqref{fix3} and \eqref{fix7} that
$$EL(T)\leq C_1\bigl(\|\ot_0\|_{L^1(\Omega)}+\|\ut_0\|_{L^2(\Omega)}
+\|r^{-\frac{3}{10}}\ut_0\|_{L^{\frac{20}{13}}(\Omega)}\bigr)\leq \f1{4\bigl(A^\omega_{4/3}+A^u_4+B^u_2\bigr)},
$$
provided that $c$ in \eqref{smallc} satisfying $c\leq \f1{4C_1\bigl(A^\omega_{4/3}+A^u_4+B^u_2\bigr)}.$  This together with Lemma \ref{lem4.5}
shows that \eqref{inteeqt} has a unique global solution in $X_\infty.$

\no$\bullet$ \underline{Behavior near $t=0$}

Let us now turn to the estimate \eqref{thmlocal3}.
Let $(\ot, \ut)$ be the unique solution of \eqref{inteeqt} on $[0,T]$ obtained in the previous one step,
we denote
$$J_{p,q,\kappa}(T)\eqdefa\sup_{0\leq t\leq T}\bigl(t^{1-\frac 1{p}}\bigl\|S(t)\ot_0\bigr\|_{L^p(\Omega)}
+t^{\frac12-\frac 1q}\|S(t)\ut_0\|_{L^q(\Omega)}+t^{\frac{13}{20}-\frac 1{\kappa}}\|r^{-\frac{3}{10}}S(t)\ut_0\|_{L^{\kappa}(\Omega)}.$$
Along the same line to the proof of \eqref{fix4}, it is easy to observe that
\begin{equation}\label{fix12}
\lim\limits_{T\rightarrow0}J_{p,q,\kappa}(T)=0,\quad\forall
p\in]1,\infty],~q\in]2,\infty],~\kappa\in]{20}/{13},\infty].
\end{equation}
While it follows from the proof of \eqref{fix6}, \eqref{fix9} and \eqref{fix10} that
\beno
\begin{split}
\bigl\|\omega^\theta(t)-S(t)\omega^\theta_0\bigr\|_{L^1(\Omega)}\leq& A^\omega_p\bigl(\|\omega^\theta\|^2_{X_t}
+\|\ut\|^2_{X_t}\bigr),\\
\|\ut(t)-S(t)\ut_0\|_{L^2(\Omega)}\leq & A^u_2\|\omega^\theta\|_{X_t}\|\ut\|_{X_t},\\
\|r^{-\frac{3}{10}}\ut(t)-r^{-\frac{3}{10}}S(t)\ut_0\|_{L^{\frac{20}{13}}(\Omega)}\leq& B^u_{\frac{20}{13}}\|\omega^\theta\|_{X_t}\|\ut\|_{X_t},
\end{split}
\eeno
from which,~\eqref{semicontinuous} and \eqref{fix11}, we infer
\beq \label{fix13}
\lim_{t\to 0}\Bigl(\bigl\|\omega^\theta(t)-\omega^\theta_0\bigr\|_{L^1(\Omega)}+
\|\ut(t)-\ut_0\|_{L^2(\Omega)}+\|r^{-\frac{3}{10}}\ut(t)-r^{-\frac{3}{10}}\ut_0\|_{L^{\frac{20}{13}}(\Omega)}\Bigr)=0.
\eeq
Whereas for any $t_0>0,$ we get, by using a similar derivation of \eqref{fix6}, that
\beno
\begin{split}
\Bigl\|&\int_{t_0}^t S(t-s)\Bigl(\div_*\bigl(\widetilde{u}(s)\ot(s)\bigr)+\frac{2\ut(s)\orr(s)}{r}\Bigr)\,ds\Bigr\|_{L^1(\Omega)}\\
&\leq C\int_{t_0}^t \frac{\|\ot_1\|_{X_T}\|\ot_2\|_{X_T}}{(t-s)^{\frac 12}\cdot s^{\frac 12}}
+\frac{\|\ut_1\|_{X_T}\|\ut_2\|_{X_T}}{(t-s)^{\frac {7}{10}}\cdot s^{\frac {3}{10}}}\,ds\\
&\leq C\bigl(t_0^{-\f12}(t-t_0)^{\f12}+t_0^{-\frac {3}{10}}(t-t_0)^{\frac {3}{10}}\bigr)
\bigl(\|\ot\|_{X_t}^2+\|\ut\|_{X_t}\|\omega^r\|_{X_t}\bigr).
\end{split}
\eeno
Hence by virtue of the expression \eqref{inteeqt}, we deduce that
\beq \label{fix14}
\lim_{t\to t_0}\|\omega^\theta(t)-\omega^\theta(t_0)\|_{L^1(\Omega)}=0.
\eeq
Exactly along the same line, we can prove that
\beno
\lim_{t\to t_0}\Bigl(\|\ut(t)-\ut(t_0)\|_{L^2(\Omega)}
+\|r^{-\frac{3}{10}}\ut(t)-r^{-\frac{3}{10}}\ut(t_0)\|_{L^{\frac{20}{13}}(\Omega)}\Bigr)=0,
\eeno
This together with \eqref{fix13} and \eqref{fix14} ensures that
\begin{equation}\label{fix15}
\omega\in C([0,T];L^1(\Omega)),~\ut\in  C([0,T];L^2(\Omega)) \mbox{ and }
r^{-\frac{3}{10}}\ut\in  C([0,T];L^{\frac{20}{13}}(\Omega)).
\end{equation}

For any $q\in]2,\infty]$, by using \eqref{fix9},~\eqref{fix11}
and \eqref{fix12}, we deduce that
$M_q(T)$ are bounded, and $M_q(T)\rightarrow0$ as $T\rightarrow0$.

For the estimate of $N_\kappa(T)$ and $L_p(T)$, we shall use a bootstrap argument.
Indeed to estimate $N_\kappa(T)$, we get, by a similar derivation of \eqref{fix10}, that
\begin{equation}\label{fix17}\begin{split}
&t^{\frac {13}{20}-\frac 1\kappa}\|r^{-\frac{3}{10}}\mathcal{F}^u\bigl((\omega,\ut),(\omega,\ut)\bigr)(t)\|_{L^\kappa(\Omega)}\\
&\leq
t^{\frac {13}{20}-\frac 1\kappa}\int_0^t\frac{C}{(t-s)^{\frac34+\frac{1}{\kappa_1}-\frac 1\kappa}}\|\widetilde{u}(\ot)(s)
\cdot r^{-\frac{3}{10}}\ut(s)\|_{L^{\f{4\kappa_1}{4+\kappa_1}}(\Omega)}\,ds\\
&\leq t^{\frac {13}{20}-\frac 1\kappa}\int_0^t\frac{C}{(t-s)^{\frac34+\frac{1}{\kappa_1}-\frac 1\kappa}}
\|\widetilde{u}(\ot)(s)\|_{L^4(\Omega)}\|r^{-\frac{3}{10}}\ut(s)\|_{L^{\kappa_1}(\Omega)}\,ds\\
&\leq t^{\frac {13}{20}-\frac 1\kappa}\int_0^t\frac{C}{(t-s)^{\frac34+\frac{1}{\kappa_1}-\frac 1\kappa}}
\frac{\|\ot\|_{X_t}N_{\kappa_1}(T)}{s^{\frac{9}{10}-\frac{1}{\kappa_1}}}\,ds
\leq B^u_{\kappa,\kappa_1}\|\ot\|_{X_T}N_{\kappa_1}(T),
\end{split}
\end{equation}
where the exponents $\kappa\in]{20}/{13},\infty],~\kappa_1\in ]{20}/{13},\infty]$ satisfying
\begin{equation}\label{fix18}
\frac{4\kappa_1}{4+\kappa_1}\leq \kappa \andf \frac34+\frac{1}{\kappa_1}-\frac1\kappa<1.
\end{equation}
Meanwhile it follows from \eqref{inteeqt} and \eqref{fix17} that
\begin{equation}\label{fix19}
N_\kappa(T)\leq J_{p,q,\kappa}(T)+B^u_{\kappa,\kappa_1}\|\ot\|_{X_T}N_{\kappa_1}(T).
\end{equation}
Note that $J_{p,q,\kappa}(T),~\|\ot\|_{X_T},~N_2(T)\rightarrow0$ as $T\rightarrow0$,
by taking $\kappa_1=2$ in \eqref{fix19}, it follows from \eqref{fix18} that for any
$\kappa\in ]{20}/{13},4[$, we have
\beq \label{NT0}
N_\kappa(T)\quad\mbox{ are bounded} \andf N_\kappa(T)\rightarrow0\quad \mbox{as}\  T\rightarrow0.
\eeq
Next, taking $\kappa_1=3$ in \eqref{fix19}, we deduce from \eqref{fix18} that \eqref{NT0} holds  for any $\kappa\in [4,12[$.
Along the same line,  taking $\kappa_1=10$ in \eqref{fix19} ensures \eqref{NT0} for any $\kappa\in [12,\infty]$.
Hence we prove that \eqref{NT0} holds for any
$\kappa\in ]{20}/{13},\infty]$.

To handle $L_p(T)$,  we get, by a similar derivation of \eqref{fix6}, that
but here we need to split the integral area in two parts,
\begin{equation}\label{fix20}\begin{split}
&t^{1-\frac 1{p}}\|\mathcal{F}^\omega\bigl((\omega,\ut),(\omega,\ut)\bigr)(t)\|_{L^p(\Omega)}\\
&\leq  C\, t^{1-\frac 1{p}}\int_0^{\frac t2}\frac{\bigl(s^{\frac 14}\|\ot(s)\|_{L^{\frac 43}(\Omega)}\bigr)^2}{(t-s)^{\frac 32-\frac 1{p}}s^{\frac 12}}
+\frac{\bigl(s^{\frac {3}{20}}\|r^{-\frac{3}{10}}\ut(s)\|_{L^{2}(\Omega)}\bigr)^2}{(t-s)^{\frac 32-\frac 1{p}+\frac15}s^{\frac {3}{10}}}\,ds\\
&\qquad+C\,  t^{1-\frac 1{p}}\int_{\frac t2}^t\Bigl(\frac{\bigl(s^{1-\frac{1}{p_1}}\|\ot(s)\|_{L^{p_1}(\Omega)}\bigr)^2
}{(t-s)^{\frac{2}{p_1}-\frac 1{p}}s^{2-\frac{2}{p_1}}}
+\frac{\bigl(s^{\frac{13}{20}-\frac{1}{2p}}\|r^{-\frac{3}{10}}\ut(s)\|_{L^{2p}(\Omega)}\bigr)^2}
{(t-s)^{\frac{7}{10}}s^{\frac{13}{10}-\frac{1}{p}}}\Bigr)ds,\\
&\leq A^\omega_{p,p_1,p_2}\bigl(\|\ot\|^2_{X_T}+\|\ut\|^2_{X_T}+L_{p_1}^2(T)+N^2_{2p}(T)\bigr).
\end{split}
\end{equation}
where the exponents $p,~p_1\in]1,\infty]$ satisfying
\begin{equation}\label{fix21}
\frac 12\leq\frac{2}{p_1}-\frac 1p<1.
\end{equation}
Then we deduce from \eqref{inteeqt} and \eqref{fix20} that
\begin{equation}\label{fix22}
L_p(T)\leq J_{p,q,\kappa}(T)+A^\omega_{p,p_1,p_2}\bigl(\|\ot\|^2_{X_T}+\|\ut\|^2_{X_T}+L_{p_1}^2(T)+N^2_{2p}(T)\bigr).
\end{equation}
Note that $J_{p,q,\kappa}(T),~L_{\frac43}=\|\ot\|_{X_T},~\|\ut\|_{X_T},~N_{2p}(T)\rightarrow0$ as $T\rightarrow0$,
by taking $p_1=\frac 43$ in \eqref{fix22}, we deduce from \eqref{fix21}, that
\beq \label{LP0}
L_p(T) \quad \mbox{are bounded} \andf L_p(T)\rightarrow0 \quad\mbox{as} \ T\rightarrow0
\eeq for any
$p \in]1,2[$.
Next taking $p_1=\frac 53$ in  \eqref{fix22} ensures that \eqref{LP0} holds for any $p\in[2,5[$.
Similarly, taking $p_1=\frac 52$  in  \eqref{fix22} implies that \eqref{LP0} holds  for any $p\in[5,\infty]$.
Thus \eqref{LP0} holds for any
$p\in]1,\infty]$.
This completes the proof of Theorem \ref{thmlocal}.
\end{proof}

Let $L_p(T),~M_q(T),~N_\kappa(T)$  be given by \eqref{LMN1}, for any $s\in\N^+$, we shall denote
\begin{equation}\label{LMN2}\begin{split}
&L_{p_1,p_2}(T)\eqdefa\sup_{p_1\leq p\leq p_2}L_p(T),\quad
M_{q_1,q_2}(T)\eqdefa\sup_{q_1\leq q\leq q_2} M_q(T),\\
&N_{\kappa_1,\kappa_2}(T)\eqdefa\sup_{\kappa_1\leq \kappa\leq \kappa_2} N_\kappa(T) \andf L^s_{p_1,p_2}(T)\eqdefa L_{p_1,p_2}(T)+|L_{p_1,p_2}(T)|^s,\\
&M^s_{q_1,q_2}(T)\eqdefa M_{q_1,q_2}(T)+|M_{q_1,q_2}(T)|^s,\quad  N^s_{\kappa_1,\kappa_2}(T)\eqdefa N_{\kappa_1,\kappa_2}(T)+|N_{\kappa_1,\kappa_2}(T)|^s.
\end{split}\end{equation}

For later use, we state the following result.
\begin{cor}
{\sl For any $0\leq\gamma\leq\delta\leq 1,~1\leq p\leq\infty,~1\leq q_2\leq q_1\leq\infty$, under the assumptions of Theorem \ref{thmlocal},
if we   assume moreover that $r^{-\gamma}\ut_0\in L^{q_2}(\Omega)$, then there hold
\ben\label{cor3.1}
&&\|r^{-\delta}\ut(t)\|_{L^{q_1}(\Omega)}
\lesssim\frac{L_{\frac{20}{17},2}(T)N_{\frac{20}{13},20}(T)}{t^{\frac12+\frac{\delta}{2}-\frac1{q_1}}}
+\frac{\|r^{-\gamma}\ut_0\|_{L^{q_2}(\Omega)}}{t^{\frac{\delta-\gamma}{2}+\frac1{q_2}-\frac1{q_1}}},\quad
\forall\ t\in [0, T],\\
&&
\|r^{-\delta}\ot(t)\|_{L^p(\Omega)}
\lesssim\frac{L_{1,\frac{20}{3}}^5(T)+M_2^3(T)
+N^5_{\frac{20}{13},20}(T) }{t^{1+\frac{\delta}2-\frac1p}},\quad
\forall \ t\in [0, T]. \label{cor3.2}
\een
}
\end{cor}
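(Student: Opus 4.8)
The plan is to run the Duhamel/heat-semigroup scheme of the proof of Theorem~\ref{thmlocal} while carrying the weight $r^{-\delta}$ through, closing the estimates with the a priori bounds $L_p(T),M_q(T),N_\kappa(T)$ from \eqref{LMN1}, H\"older's inequality, and the Biot--Savart bounds of Lemma~\ref{lem2.2.2}. Starting from the integral formulation \eqref{inteeqt} one writes
\[
r^{-\delta}\ut(t)=r^{-\delta}S(t)\ut_0-\int_0^t r^{-\delta}S(t-s)\div_*\bigl(\widetilde u(s)\ut(s)\bigr)\,ds+\int_0^t r^{-\delta}S(t-s)\frac{2\ut(s)\ur(s)}{r}\,ds ,
\]
and the analogous identity for $r^{-\delta}\ot(t)$ with $\div_*(\widetilde u\ot)$ and $\frac{\pa_z|\ut|^2}{r}$ in place of the two nonlinear terms. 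Each of the six pieces is controlled by the corresponding inequality of Proposition~\ref{lemsemi}; the only genuine choice is how to split the available $r$-powers between the exterior weight $r^\alpha$ and the interior weight $r^\beta$, so as to respect the constraints $\alpha+\beta\le 0$ (resp.\ $\le 1$), $\alpha,\beta\ge-1$ while still being able to invoke $N_\kappa$ via the factor $r^{-3/10}$.

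For the linear term of \eqref{cor3.1} I would apply \eqref{semi3} with $\alpha=-\delta$, $\beta=1+\gamma$, $g=r^{-\gamma}\ut_0$ (admissible since $0\le\gamma\le\delta\le1$), which reproduces exactly the exponent $\frac{\delta-\gamma}{2}+\frac1{q_2}-\frac1{q_1}$. For the quadratic terms I write $\widetilde u\ut=r^{3/10}\bigl(\widetilde u\cdot r^{-3/10}\ut\bigr)$ and $\frac{2\ut\ur}{r}=r^{3/10-1}\bigl(2\ur\cdot r^{-3/10}\ut\bigr)$, apply \eqref{semi1} and \eqref{semi3} respectively with $\alpha=-\delta,\ \beta=\frac3{10}$, split the products by H\"older, bound $\|\widetilde u\|_{L^a}$ (resp.\ $\|\ur\|_{L^a}$) by $\|\ot\|_{L^{a'}}$ via Lemma~\ref{lem2.2.2} and hence by $L_{a'}(T)s^{-(1-1/a')}$, and bound $\|r^{-3/10}\ut\|_{L^b}$ by $N_b(T)s^{-(13/20-1/b)}$. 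As in \eqref{fix6}--\eqref{fix10} the resulting Beta-type integral has, by scaling invariance, total exponent exactly $\frac12+\frac\delta2-\frac1{q_1}$, so one only has to check that the two individual exponents are $<1$, which pins down the admissible ranges of $a',b$ and forces a split into a small-$\delta$ regime (where $\beta=\frac3{10}$ is not allowed by $\alpha+\beta\le0$ and one instead keeps $\ut$ in the $M_q$-norm with $\beta=0$) and a large-$\delta$ regime; summing over a finite chain of auxiliary indices, exactly as in the bootstrap at the end of the proof of Theorem~\ref{thmlocal}, yields \eqref{cor3.1} for all $\delta\in[0,1]$, $1\le q_2\le q_1\le\infty$ (using part (ii) of Lemma~\ref{lem2.2.2}, i.e.\ \eqref{2.2.6}, for the endpoint $q_1=\infty$).

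Estimate \eqref{cor3.2} is then obtained by feeding \eqref{cor3.1} back into the $\ot$-equation. The linear part $r^{-\delta}S(t)\ot_0$ is handled by \eqref{semi3} with $\beta=1$, $g=\ot_0$ and $\|\ot_0\|_{L^1(\Omega)}\le L_1(T)$; the term $\div_*(\widetilde u\ot)$ is treated as above, with $\widetilde u$ controlled by $\ot$ through Lemma~\ref{lem2.2.2}, contributing a combination of powers of $L_{1,20/3}(T)$ (and of $M_2(T)$ from the interpolation needed in the small-$p$ range); and the swirl term $\frac{\pa_z|\ut|^2}{r}=\div_*\bigl(r^{1/5}(0,(r^{-3/5}\ut)^2)\bigr)$ is handled by \eqref{semi1} with $\beta=\frac15$ (or, for small $\delta$, with $\ut$ kept in $M_q$ and $\beta=0$), after which $r^{-3/5}\ut$ is estimated by \eqref{cor3.1} itself — the initial-data contribution there being absorbed since $\|r^{-3/10}\ut_0\|_{L^{20/13}(\Omega)}\le N_{20/13,20}(T)$ — so that squaring turns the $L\cdot N$ product into a quartic combination of type $L^2N^2$; this, together with the degree-$\le3$ contributions from the other terms, is majorized by $L^5_{1,20/3}(T)+M_2^3(T)+N^5_{20/13,20}(T)$, using that for the bounded quantities at hand $|L|^4\le|L|+|L|^5$ and similarly for $M_2,N_\kappa$.

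\textbf{Main obstacle.} The analytic content of each individual inequality is light; the real work is the bookkeeping. For each quadratic term the pair $(\alpha,\beta)$ must be chosen so that simultaneously (i) the weight constraints of Proposition~\ref{lemsemi} hold, (ii) the auxiliary H\"older exponents stay in the ranges where Lemma~\ref{lem2.2.2} and the definitions of $L_p,M_q,N_\kappa$ apply, and (iii) both exponents of the ensuing Beta integral are $<1$; these cannot be met by a single universal choice, which is exactly why the proof must be organized into several $\delta$-regimes and completed by a finite bootstrap in the integrability index, mirroring the treatment of $N_\kappa(T)$ and $L_p(T)$ at the end of the proof of Theorem~\ref{thmlocal}.
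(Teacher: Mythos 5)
Your proposal runs the same Duhamel/semigroup scheme the paper uses and correctly identifies all the key ingredients: the weighted estimates of Proposition~\ref{lemsemi} applied to the integral representation \eqref{inteeqt}, the Biot--Savart bounds of Lemma~\ref{lem2.2.2} to convert $\widetilde u$ into $\ot$, the a~priori quantities $L_p,M_q,N_\kappa$ with their time weights, and the observation that \eqref{cor3.2} closes by feeding \eqref{cor3.1} into the swirl source $\frac{\pa_z|\ut|^2}{r}$ via $r^{-(1+\delta)/2}\ut$. Two points deserve comment, the second of which touches a genuine subtlety.

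First, the paper's organizational device for making the Beta-type integrals converge in the extreme regimes is \emph{not} a chain-of-indices bootstrap: it is the split $\int_0^t=\int_0^{t/2}+\int_{t/2}^t$ with different H\"older exponents on each half (as in \eqref{fix20}), combined with the two exhaustive cases $q_1\in[1,10]$ and $q_1\in]10,\infty]$ for \eqref{cor3.1}, and $\delta\in[0,1/2]$, $\delta\in]1/2,1]$, each with $p\in[1,5]$, $p\in]5,\infty]$, for \eqref{cor3.2}. Without the time-split, your requirement that ``the two individual exponents are $<1$'' in the Beta integral genuinely fails at $\delta=1$, $q_1=\infty$ (the admissible interval for $\frac1{a'}+\frac1b$ becomes empty), and this is exactly the endpoint used in \eqref{cor3.6} to bound $\bigl\|r^{-(1+\delta)/2}\ut(s)\bigr\|_{L^{2p}}^2$ with $p=\infty$. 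So the time-split must be made explicit; the bootstrap alone is not enough.

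Second, your observation that the constraint $\alpha+\beta\le 0$ in \eqref{semi1} fails for the $\div_*(\widetilde u\ut)$ term when $\delta<\frac3{10}$ (since there $\alpha+\beta=\frac3{10}-\delta>0$) is a real wrinkle, and the paper's own proof applies \eqref{semi1} with $\beta=\frac3{10}$ uniformly without addressing it — even though \eqref{cor3.1} is later invoked in Lemma~\ref{lem5.1} with $\delta=\frac{9p^2-7}{24p-2}\in\bigl[\frac1{11},\frac{1169}{9280}\bigr]<\frac3{10}$. Your proposed fix of switching to $\beta=0$ and using $M_q$ is analytically sound, but it produces a bound of the form $L\cdot M$ (or, if one interpolates $r^{-\delta}\ut$ between $\ut$ and $r^{-3/10}\ut$, a mixed $L\cdot N^\theta M^{1-\theta}$), which is not comparable to the stated $L_{\frac{20}{17},2}N_{\frac{20}{13},20}$ since $r$ is unbounded on $\Omega$. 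If you adopt this fix you should acknowledge that for $\delta<\frac3{10}$ the right-hand side of \eqref{cor3.1} then carries an $M_q$ factor; as written your sketch does not reconcile this with the statement being proved.
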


\begin{proof}
 When $q_1\in[1,10]$,  in view of \eqref{inteeqt}, we get, by applying Proposition \ref{lemsemi} and then Lemma \ref{lem2.2.2}, that
\begin{align*}
\|r^{-\delta}&\ut(t)\|_{L^{q_1}(\Omega)}
\lesssim \frac{\|r^{-\gamma}\ut_0\|_{L^{q_2}(\Omega)}}{t^{\frac{\delta-\gamma}{2}+\frac1{q_2}-\frac1{q_1}}}
+\int_0^{\frac t2} \frac{\|\widetilde{u}(s)\|_{L^{\frac{20}{7}}(\Omega)}
\|r^{-\frac{3}{10}}\ut(s)\|_{L^{\frac{20}{13}}(\Omega)}}{(t-s)^{\frac{27}{20}+\frac{\delta}{2}-\frac{1}{q_1}}}\, ds\\
&\qquad\qquad\qquad +\int_{\frac t2}^t \frac{\|\widetilde{u}(s)\|_{L^{\frac{20}{7}q_1}(\Omega)}\|r^{-\frac{3}{10}}\ut(s)\|_{L^{{\frac{20}{13}q_1}}(\Omega)}}
{(t-s)^{\frac{7}{20}+\frac{\delta}{2}}}\,ds\\
&\lesssim \frac{\|r^{-\gamma}\ut_0\|_{L^{q_2}(\Omega)} }{t^{\frac{\delta-\gamma}{2}+\frac1{q_2}-\frac1{q_1}}}
+\int_0^{\frac t2} \frac{L_{\frac{20}{17}}(T)N_{\frac{20}{13}}(T)}{(t-s)^{\frac{27}{20}+\frac{\delta}{2}-\frac{1}{q_1}}s^{\frac{3}{20}}}\,ds
+\int_{\frac t2}^t \frac{L_{\frac{20q_1}{10q_1+7}}(T)N_{\frac{20}{13}q_1}(T)}{(t-s)^{\frac{7}{20}+\frac{\delta}{2}}s^{\frac{23}{20}-\frac1{q_1}}}\,ds\\
&\lesssim \frac{\|r^{-\gamma}\ut_0\|_{L^{q_2}(\Omega)} }{t^{\frac{\delta-\gamma}{2}+\frac1{q_2}-\frac1{q_1}}}
+\frac{L_{\frac{20}{17}}(T)N_{\frac{20}{13}}(T)+L_{\frac{20q_1}{10q_1+7}}(T)N_{\frac{20}{13}q_1}(T)}
{t^{\frac12+\frac{\delta}{2}-\frac1{q_1}}},
\end{align*}
which yields \eqref{cor3.1} for $q_1\in[1,10]$.

When $q_1\in]10,\infty]$, we get, by a similar derivation, that
\begin{align*}
\|r^{-\delta}&\ut(t)\|_{L^{q_1}(\Omega)}
\lesssim \frac{\|r^{-\gamma}\ut_0\|_{L^{q_2}(\Omega)} }{t^{\frac{\delta-\gamma}{2}+\frac1{q_2}-\frac1{q_1}}}
+\int_0^{\frac t2} \frac{L_{\frac{20}{17}}(T)N_{\frac{20}{13}}(T)}{(t-s)^{\frac{27}{20}+\frac{\delta}{2}-\frac{1}{q_1}}s^{\frac{3}{20}}}\,ds
+\int_{\frac t2}^t \frac{L_{2}(T)N_{20}(T)}{(t-s)^{\frac{2}{5}+\frac{\delta}{2}-\frac1{q_1}}s^{\frac{11}{10}}}\,ds\\
&\lesssim \frac{\|r^{-\gamma}\ut_0\|_{L^{q_2}(\Omega)} }{t^{\frac{\delta-\gamma}{2}+\frac1{q_2}-\frac1{q_1}}}
+\frac{L_{\frac{20}{17}}(T)N_{\frac{20}{13}}(T)+L_{2}(T)N_{20}(T)}
{t^{\frac12+\frac{\delta}{2}-\frac1{q_1}}}.
\end{align*}
This proves the estimate \eqref{cor3.1}.

To handle the estimate \eqref{cor3.2}, we first consider the case when $\delta\in[0,\frac12]$ and $p\in[1,5]$. In this case,
 applying Proposition \ref{lemsemi} to  \eqref{inteeqt} and then using the estimate \eqref{cor3.1} gives rise to
\begin{equation}\begin{split}\label{cor3.3}
\|r^{-\delta}\ot(t)\|_{L^p(\Omega)}
\lesssim& \frac{\|\ot_0\|_{L^1(\Omega)}}{t^{\frac{\delta}{2}+1-\frac1p}}
+\int_0^{\frac t2}\Bigl(\frac{\|\widetilde{u}(s)\|_{L^{4}(\Omega)}\|\ot(s)\|_{L^{\frac{4}{3}(\Omega)}}}{(t-s)^{\frac32+\frac{\delta}{2}-\frac1p}}
+\frac{\bigl\|{\ut}(s)\bigr\|_{L^{2}(\Omega)}^2}{(t-s)^{2+\frac{\delta}{2}-\frac1p}}\Bigr)\,ds\\
&+\int_{\frac t2}^t \Bigl(\frac{\|\widetilde{u}(s)\|_{L^{4p}(\Omega)}\|\ot(s)\|_{L^{\frac{4p}{3}(\Omega)}}}{(t-s)^{\frac12+\frac{\delta}{2}}}
+\frac{\bigl\|r^{-\frac{1+\delta}{2}}{\ut}(s)\bigr\|_{L^{2p}(\Omega)}^2}{(t-s)^{\frac{1}{2}}}\Bigr)\,ds\\
\lesssim& \frac{\|\ot_0\|_{L^1(\Omega)}}{t^{\frac{\delta}{2}+1-\frac1p}}
+\int_0^{\frac t2}\Bigl(\frac{L_{\frac43}^2(T)}{(t-s)^{\frac32+\frac{\delta}{2}-\frac1p}s^{\frac{1}2}}
+\frac{M_2^2(T)}{(t-s)^{2+\frac{\delta}{2}-\frac1p}}\Bigr)\,ds\\
&+\int_{\frac t2}^t \Bigl(\frac{L_{\frac{4p}{1+2p}}(T)L_{\frac{4p}{3}}(T)}{(t-s)^{\frac12+\frac{\delta}{2}}s^{\frac{3}2-\frac 1p}}
+\frac{L^2_{\frac{20}{17},\frac{20}{11}}(T)N^2_{\frac{20}{13},20}(T)+\|\ut_0\|_{L^2(\Omega)}^2}
{(t-s)^{\frac{1}{2}}s^{\frac{3}2+\frac{\delta}{2}-\frac 1p}}\Bigr)\,ds\\
\lesssim& \frac{L_{1,\frac{20}{3}}^4(T)+M_2^2(T)
+N^4_{\frac{20}{13},20}(T) }{t^{\frac{\delta}{2}+1-\frac1p}},
\end{split}\end{equation}
where in the last step we use the fact that $\|\ot_0\|_{L^1(\Omega)}\leq L_1,~\|\ut_0\|_{L^2(\Omega)}\leq M_2.$

 On the other side, when $p\in]5,\infty]$, we have
\begin{equation}\begin{split}\label{cor3.4}
\|r^{-\delta}\ot(t)\|_{L^p(\Omega)}
\lesssim &\frac{\|\ot_0\|_{L^1(\Omega)}}{t^{\frac{\delta}{2}+1-\frac1p}}
+\int_0^{\frac t2}\Bigl(\frac{\|\widetilde{u}(s)\|_{L^{4}(\Omega)}\|\ot(s)\|_{L^{\frac{4}{3}(\Omega)}}}{(t-s)^{\frac32+\frac{\delta}{2}-\frac1p}}
+\frac{\bigl\|{\ut}(s)\bigr\|_{L^{2}(\Omega)}^2}{(t-s)^{2+\frac{\delta}{2}-\frac1p}}\Bigr)\,ds\\
&+\int_{\frac t2}^t \Bigl(\frac{\|\widetilde{u}(s)\|_{L^{20}(\Omega)}\|\ot(s)\|_{L^{\frac{20}{3}(\Omega)}}}{(t-s)^{\frac{7}{10}+\frac{\delta}{2}-\frac1p}}
+\frac{\bigl\|r^{-\frac{1+\delta}{2}}{\ut}(s)\bigr\|_{L^{2p}(\Omega)}^2}{(t-s)^{\frac{1}{2}}}\Bigr)\,ds\\
&\lesssim \frac{L_{1,\frac{20}{3}}^4(T)+M_2^2(T)
+N^4_{\frac{20}{13},20}(T) }{t^{\frac{\delta}{2}+1-\frac1p}}.
\end{split}\end{equation}
Combining \eqref{cor3.3} with \eqref{cor3.4} leads to the  estimate \eqref{cor3.2} for $\delta\in[0,\frac12]$.

For the remaining case when $\delta\in]1/2,1]$, we first get, by a similar derivation of \eqref{cor3.3}  and then using \eqref{cor3.1} that
for $p\in[1,5]$
\begin{equation}\begin{split}\label{cor3.5}
\|r^{-\delta}\ot(t)\|_{L^p(\Omega)}
\lesssim& \frac{\|\ot_0\|_{L^1(\Omega)}}{t^{\frac{\delta}{2}+1-\frac1p}}
+\int_0^{\frac t2}\Bigl(\frac{\|\widetilde{u}(s)\|_{L^{4}(\Omega)}\|\ot(s)\|_{L^{\frac{4}{3}(\Omega)}}}{(t-s)^{\frac32+\frac{\delta}{2}-\frac1p}}
+\frac{\bigl\|{\ut}(s)\bigr\|_{L^{2}(\Omega)}^2}{(t-s)^{2+\frac{\delta}{2}-\frac1p}}\Bigr)\,ds\\
&+\int_{\frac t2}^t \Bigl(\frac{\|\widetilde{u}(s)\|_{L^{4p}(\Omega)}\|r^{-\frac12}\ot(s)\|_{L^{\frac{4p}{3}(\Omega)}}}{(t-s)^{\frac14+\frac{\delta}{2}}}
+\frac{\bigl\|r^{-\frac{1+\delta}{2}}{\ut}(s)\bigr\|_{L^{2p}(\Omega)}^2}{(t-s)^{\frac{1}{2}}}\Bigr)\,ds\\
&\lesssim \frac{L_{1,\frac{20}{3}}^5(T)+M_2^3(T)
+N^5_{\frac{20}{13},20}(T) }{t^{\frac{\delta}{2}+1-\frac1p}}.
\end{split}\end{equation}

Finally when  $p\in]5,\infty]$, we  deduce, by a similar derivation of \eqref{cor3.4}, that
\begin{equation}\begin{split}\label{cor3.6}
\|r^{-\delta}\ot(t)\|_{L^p(\Omega)}
\lesssim& \frac{\|\ot_0\|_{L^1(\Omega)}}{t^{\frac{\delta}{2}+1-\frac1p}}
+\int_0^{\frac t2}\Bigl(\frac{\|\widetilde{u}(s)\|_{L^{4}(\Omega)}\|\ot(s)\|_{L^{\frac{4}{3}(\Omega)}}}{(t-s)^{\frac32+\frac{\delta}{2}-\frac1p}}
+\frac{\bigl\|{\ut}(s)\bigr\|_{L^{2}(\Omega)}^2}{(t-s)^{2+\frac{\delta}{2}-\frac1p}}\Bigr)\,ds\\
&+\int_{\frac t2}^t \Bigl(\frac{\|\widetilde{u}(s)\|_{L^{20}(\Omega)}\|r^{-\frac12}\ot(s)\|_{L^{\frac{20}{3}(\Omega)}}}
{(t-s)^{\frac{9}{20}+\frac{\delta}{2}-\frac1p}}
+\frac{\bigl\|r^{-\frac{1+\delta}{2}}{\ut}(s)\bigr\|_{L^{2p}(\Omega)}^2}{(t-s)^{\frac{1}{2}}}\Bigr)\,ds\\
&\lesssim \frac{L_{1,\frac{20}{3}}^5(T)+M_2^3(T)
+N^5_{\frac{20}{13},20}(T) }{t^{\frac{\delta}{2}+1-\frac1p}}.
\end{split}\end{equation}
Combining \eqref{cor3.5} with \eqref{cor3.6} implies \eqref{cor3.2} for $\delta\in]\frac12,1]$. This completes the proof of the estimate
\eqref{cor3.2} and hence the corollary.
\end{proof}

\section{Global {\it a priori} estimates of \eqref{otut} with nearly critical initial data}\label{sec4}

The goal of this section is basically to prove that, as long as the initial data
belongs to the almost critical spaces, the system has a unique global solution.

Let us introduce another two variables which are of great importance in our work, namely
\begin{equation}\label{defetaV}
\eta\eqdefa\frac{\ot}{r},\quad V^\varepsilon\eqdefa \frac{\ut}{r^{1-\varepsilon}}\quad \mbox{for any}\ \varepsilon\in]0,1[.
\end{equation}
And it is not difficult to deduce the equations for $\eta$ and $V^\varepsilon$
from \eqref{otut} that
\begin{equation}\label{etaV}
\left\{
\begin{array}{l}
\displaystyle \pa_t \eta+(u^r\pa_r+u^z\pa_z) \eta-(\Delta+\frac 2r\pa_r)\eta
-\frac{2V \pa_z V}{r^{2\varepsilon}}=0,\\
\displaystyle \pa_t V+(u^r\pa_r+u^z\pa_z) V+(2-\varepsilon)\frac{u^r V}{r}
-(\Delta+ 2(1-\varepsilon)\frac {\pa_r}{r})V
+(2\varepsilon-\varepsilon^2)\frac{V}{r^2}=0,\\
\displaystyle \eta|_{t=0} =\eta_0=\frac{\ot_0}{r},\, V|_{t=0} =V_0=\frac{\ut_0}{r^{1-\varepsilon}},
\end{array}
\right.
\end{equation}
here and in all that follows, we always denote $V^\varepsilon$ as $V$,
if there is no ambiguity.

\begin{prop}\label{nearglobal}
{\sl Let $(u^r,u^\th,u^z)$ be a smooth enough solution of \eqref{1.2} on $[0,T].$
Let $p\in]1,\f {21}{20}]$,~$\e=\frac{-9p^2+21p-4}{24p-2}\in\bigl[\frac{3251}{9280},\frac{4}{11}\bigr[$, and $q$ be given by
\begin{equation}\label{nearglobal1}
(2-\e)q=3p.
\end{equation}
We assume that the initial data $\eta_0\in L^p$, $V_0^\e=\f{u^\th}{r^{1-\e}}\in L^q$,
$r\ut_0\in L^\infty\bigcap L^{\frac{1}{p-1}},$ which satisfy
\begin{equation}\label{nearglobal2}
(2M_0)^{\frac{3(p+2)}{p(3p+11)}}
\|r\ut_0\|_{L^{\frac{10(12p-1)}{9(p-1)(p+2)(p+3)}}}^{\f{10(12p-1)}{3p(p+3)(3p+11)}}
+(2M_0)^{\frac{p-1}{4p}}
\|r\ut_0\|_{L^{\frac{2}{3(p-1)^2}}}^{\f1{6p}}\leq c_0(p-1),
\end{equation}
for some sufficiently small  constant $c_0$ which does not depend on the choice of $p$,
and $M_0\eqdefa\|V_0\|_{L^q}^q+\|\eta_0\|_{L^p}^p$.
Then for any $t\in [0,T]$,  we have
\begin{equation}\begin{split}\label{nearglobal3}
\|V(t)\|_{L^q}^q&+\|\eta(t)\|_{L^p}^p
+\frac{p-1}{2}\|\nabla|\eta|^{\frac p2}\|_{L^2((0,t)\times\R^3)}^2
\\
&+\frac12\Bigl(\|\nabla|V|^{\frac q2}\|_{L^2((0,t)\times\R^3)}^2+\Bigl\|\frac{|V|^{\frac q2}}{r}\Bigr\|_{L^2((0,t)\times\R^3)}^2\Bigr)
\leq 2\bigl(\|V_0\|_{L^q}^q+\|\eta_0\|_{L^p}^p\bigr).
\end{split}\end{equation}
}
\end{prop}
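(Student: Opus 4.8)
The plan is to establish a coupled $L^p$--$L^q$ energy inequality for the unknowns $\eta=\ot/r$ and $V=V^\e=\ut/r^{1-\e}$ governed by \eqref{etaV}, with $\e$ and $q$ as prescribed, and to close it by a bootstrap argument in which the smallness hypothesis \eqref{nearglobal2} on $\|r\ut_0\|$, together with the coercivity carried by the factor $p-1$, absorbs every nonlinear term. First I would test the $\eta$-equation of \eqref{etaV} by $|\eta|^{p-2}\eta$ in $L^2(\R^3)$: the transport term drops because $\widetilde u$ is divergence free, and an integration by parts turns $-(\Delta+\frac2r\pa_r)\eta$ into $\frac{4(p-1)}{p^2}\|\nabla|\eta|^{\frac p2}\|_{L^2(\R^3)}^2$ plus the non-negative boundary contribution $\frac2p\int_\R|\eta(0,z)|^p\,dz$, which is favourable and which I discard. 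Testing the $V$-equation by $|V|^{q-2}V$ yields $\frac{4(q-1)}{q^2}\|\nabla|V|^{\frac q2}\|_{L^2(\R^3)}^2$ from the diffusion, \emph{no} boundary term (as $V=\ut/r^{1-\e}=O(r^\e)$ vanishes on the axis), and the good zeroth-order term $(2\e-\e^2)\bigl\||V|^{\frac q2}/r\bigr\|_{L^2(\R^3)}^2$. For $p\in]1,\frac{21}{20}]$ with $\e=\e(p)$ as stated, $q=\frac{3p}{2-\e}$ ranges over a compact subinterval of $]1,2[$, so $\frac{4(q-1)}{q}$ and $(2\e-\e^2)q$ exceed $1$ while $\frac{4(p-1)}{p}\geq p-1$; summing the two identities then gives
\[
\f{d}{dt}\bigl(\|V\|_{L^q}^q+\|\eta\|_{L^p}^p\bigr)+(p-1)\|\nabla|\eta|^{\frac p2}\|_{L^2}^2+\|\nabla|V|^{\frac q2}\|_{L^2}^2+\bigl\||V|^{\frac q2}/r\bigr\|_{L^2}^2\leq \cR_V+\cR_\eta ,
\]
with $\cR_V\lesssim\bigl|\int_{\R^3}\frac{\ur}{r}\,|V|^q\,dx\bigr|$ (coming from $(2-\e)\frac{\ur V}{r}$) and, after one integration by parts in $z$ on $\frac{2V\pa_zV}{r^{2\e}}$, $\cR_\eta\lesssim(p-1)\int_\Omega r^{1-2\e}\,V^2\,|\eta|^{\frac p2-1}\bigl|\pa_z|\eta|^{\frac p2}\bigr|\,dr\,dz$.

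Next I would estimate the right-hand side. For $\cR_V$, H\"older gives $\int|\frac{\ur}{r}|\,|V|^q\leq\|\frac{\ur}{r}\|_{L^a}\|V\|_{L^{qa'}}^q$; I would bound $\|\frac{\ur}{r}\|_{L^a}$ by Proposition \ref{lem2.6.1} as $\|\eta\|_{L^p}^{\lambda}\|\eta\|_{L^{3p}}^{1-\lambda}$, control the high Lebesgue norms $\|\eta\|_{L^{3p}}$ and $\|V\|_{L^{3q}}$ by the dissipation through the Sobolev inequality $\||f|^{\frac s2}\|_{L^6(\R^3)}\lesssim\|\nabla|f|^{\frac s2}\|_{L^2(\R^3)}$, interpolate the intermediate norms against $\|\eta\|_{L^p}$ and $\|V\|_{L^q}$, and absorb the residual $r$-weights into powers of $\|r^{2-\e}V\|_{L^\infty}=\|r\ut\|_{L^\infty}$ and of $\bigl\||V|^{\frac q2}/r\bigr\|_{L^2}$. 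For $\cR_\eta$, I would split $r^{1-2\e}=r^{1/2}\cdot r^{1/2-2\e}$, use Cauchy--Schwarz to peel off $\|\nabla|\eta|^{\frac p2}\|_{L^2(\R^3)}$, and treat the remaining $\|r^{1/2-2\e}V^2|\eta|^{\frac p2-1}\|_{L^2(dr\,dz)}$ by the same H\"older--Sobolev--interpolation scheme. At each step Lemma \ref{lem2.1.2} lets one replace $\|r\ut(t)\|_{L^m}$ by $\|r\ut_0\|_{L^m}$ (with $m\in[2,\infty]$ once $p$ is near $1$). The relations $(2-\e)q=3p$ and $\e=\frac{-9p^2+21p-4}{24p-2}$ are chosen precisely so that every resulting monomial carries the \emph{same} homogeneity in the dissipation norms --- their exponents summing to $2$, whence Young's inequality costs no constant --- which leads to
\[
\cR_V+\cR_\eta\leq\cK\cdot\Bigl((p-1)\|\nabla|\eta|^{\frac p2}\|_{L^2}^2+\|\nabla|V|^{\frac q2}\|_{L^2}^2+\bigl\||V|^{\frac q2}/r\bigr\|_{L^2}^2\Bigr),
\]
where $\cK$ gathers the resulting powers of $\|V\|_{L^q}^q+\|\eta\|_{L^p}^p$ and of $\|r\ut_0\|_{L^m}$; the point is that $\cK$ is controlled by a $p$-independent multiple of the left-hand side of \eqref{nearglobal2} divided by $p-1$ (the $p-1$ reflecting the cost of absorbing into the degenerate $\eta$-dissipation), so that under \eqref{nearglobal2} with $c_0$ small and under the a priori bound $\|V\|_{L^q}^q+\|\eta\|_{L^p}^p\leq2M_0$ one gets $\cK\leq\frac14$.

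Finally, I would close by a continuity argument: let $T^\star$ be the largest $t\leq T$ with $\|V(s)\|_{L^q}^q+\|\eta(s)\|_{L^p}^p\leq2M_0$ for all $s\leq t$ (a non-empty, closed set by continuity of the smooth solution, $M_0$ being its value at $t=0$). On $[0,T^\star]$ the previous step gives $\cK\leq\frac14$, so the right-hand side of the energy inequality is absorbed, leaving
\[
\f{d}{dt}\bigl(\|V\|_{L^q}^q+\|\eta\|_{L^p}^p\bigr)+\f12\Bigl((p-1)\|\nabla|\eta|^{\frac p2}\|_{L^2}^2+\|\nabla|V|^{\frac q2}\|_{L^2}^2+\bigl\||V|^{\frac q2}/r\bigr\|_{L^2}^2\Bigr)\leq0 ,
\]
and integration over $(0,t)$ yields \eqref{nearglobal3} (with right-hand side in fact at most $M_0$); in particular $\|V(t)\|_{L^q}^q+\|\eta(t)\|_{L^p}^p<2M_0$ on $[0,T^\star]$, so by continuity $T^\star=T$ and \eqref{nearglobal3} holds on all of $[0,T]$. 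The step I expect to be the real obstacle is the second one: arranging the chain of H\"older, Sobolev, Hardy and interpolation inequalities so that every monomial ends up with precisely the homogeneity of $(p-1)\|\nabla|\eta|^{\frac p2}\|_{L^2}^2+\|\nabla|V|^{\frac q2}\|_{L^2}^2+\||V|^{\frac q2}/r\|_{L^2}^2$ and with exactly the powers of $M_0$ and $\|r\ut_0\|_{L^m}$ appearing in \eqref{nearglobal2} --- this is what pins down both the function $\e=\e(p)$ and the exotic Lebesgue exponents there --- while keeping all implicit constants uniform as $p\to1^+$.
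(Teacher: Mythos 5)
Your overall architecture matches the paper's (coupled $L^p$--$L^q$ energy estimate for $\eta=\ot/r$ and $V^\e=\ut/r^{1-\e}$, bounding $u^r/r$ through Proposition~\ref{lem2.6.1}, splitting off the weight via $\|r^{2-\e}V\|_{L^\infty}=\|r\ut\|_{L^\infty}$ and the good term $\||V|^{q/2}/r\|_{L^2}^2$, and closing by a continuity argument), and your treatment of $\cR_V$ is essentially that of Lemma~\ref{lem3.1}. But there is a genuine gap in your treatment of $\cR_\eta$, and it sits exactly where you flag the ``real obstacle'': your integration by parts in $z$ moves the derivative onto the wrong factor.

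After integrating $\int\frac{2V\pa_zV}{r^{2\e}}|\eta|^{p-2}\eta\,dx$ by parts in $z$ you get
$-(p-1)\int\frac{V^2}{r^{2\e}}|\eta|^{p-2}\pa_z\eta\,dx$, and to extract $\pa_z|\eta|^{p/2}$ you must write $|\eta|^{p-2}\pa_z\eta=\tfrac2p\,|\eta|^{\frac p2-1}\,\mathrm{sgn}(\eta)\,\pa_z|\eta|^{\frac p2}$. Since $p\in\,]1,\tfrac{21}{20}]$, the exponent $\tfrac p2-1$ is negative; after Cauchy--Schwarz you are therefore left needing a bound on $\bigl\|r^{\frac12-2\e}V^2|\eta|^{\frac p2-1}\bigr\|_{L^2(drdz)}$, which involves a \emph{negative} power of $\eta$. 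This quantity is not controlled by any combination of $\|\eta\|_{L^p}$, $\|V\|_{L^q}$, the dissipations, $\|r\ut_0\|_{L^m}$, and Sobolev/Hardy/interpolation: taking $\eta$ small on a set where $V$ is not makes it as large as you wish, independently of all your norms. Before Cauchy--Schwarz the integral is of course finite, but the cancellation that keeps it finite is exactly what Cauchy--Schwarz discards, so the H\"older--Sobolev--interpolation scheme you propose cannot close. The factor $p-1$ you gain this way is also not the one that ultimately appears in \eqref{nearglobal2}; in the paper that $p-1$ arises at the very last stage from comparing the absorbed error against the degenerate dissipation $\frac{4(p-1)}{p^2}\|\nabla|\eta|^{\frac p2}\|_{L^2}^2$, not from an algebraic factor in the integrand.

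The paper (Lemma~\ref{lem3.2}) avoids the negative power by \emph{not} integrating by parts: it keeps $\pa_z$ on $V$, writes $V\pa_zV=\tfrac2q|V|^{2-\frac q2}\pa_z|V|^{\frac q2}$ (here $2-\tfrac q2>0$ since $q<4$ for $p$ near $1$), and bounds
\begin{equation*}
\Bigl|\int_{\R^3}\frac{2V\pa_zV}{r^{2\e}}|\eta|^{p-2}\eta\,dx\Bigr|\lesssim
\bigl\||\eta|^{p-1}\bigr\|_{L^{\frac{2p}{p-1}}}\,\bigl\|\pa_z|V|^{\frac q2}\bigr\|_{L^2}\,
\Bigl\|\frac{|V|^{2-\frac q2}}{r^{2\e}}\Bigr\|_{L^{2p}} ,
\end{equation*}
so only the \emph{positive} power $|\eta|^{p-1}$ enters and can be handled by Gagliardo--Nirenberg as $\|\eta\|_{L^p}^{\frac{p-1}4}\|\nabla|\eta|^{\frac p2}\|_{L^2}^{\frac{3(p-1)}{2p}}$. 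The last factor is then decomposed as $\bigl|\tfrac{|V|^q}{r^2}\bigr|^\alpha\,|r^{2-\e}V|^{\frac1{6p}}\,|V|^{\frac q2\beta}$, and the relation $\e=\frac{-9p^2+21p-4}{24p-2}$ is selected precisely so that $\alpha+\tfrac1r=\tfrac1{2p}-\tfrac{(p-1)^2}{4p}$, which makes the stray $\||V|^{\frac q2}\|_{L^2}$ exponent vanish and the total dissipation exponent equal $2$ for the final Young step. If you replace your integration-by-parts step with this chain-rule-on-$V$ step, the rest of your proposal falls into line with the paper's proof.
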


Let us remark that both the index $\frac{10(12p-1)}{9(p-1)(p+2)(p+3)}$ and ${\frac{2}{3(p-1)^2}}$
are close enough to $\infty$ as long as $p$ approaches $1,$ which corresponds to the case with initial data in the critical spaces.

The proof of the above proposition relies on the following lemmas:

\begin{lem}\label{lem3.1}
{\sl Under the assumptions of Proposition \ref{nearglobal}, for any $t\in ]0,T],$ there holds
\begin{equation}\label{3.11}\begin{split}
&\frac1{q}\frac{d}{dt}\|V(t)\|_{L^{q}}^{q}+
\frac{4(q-1)}{q^2}\bigl\|\nabla|V|^{\frac q2}\bigr\|_{L^2}^2+(2\e-\e^2)\int_{\R^3}\frac{|V|^{q}}{r^2}\,dx\\
&\qquad\lesssim\|r\ut\|_{L^{\f{5}{2-\e}\cdot\frac{p}{(p-1)(p+2)}}}^{\f{15}{(2-\e)(3p+11)}}
\||V|^{\f q2}\|_{L^2}^{\f{23-17p-3p^2}{p(3p+11)}}
\|\eta\|_{L^p}^{\frac{3p^2+23p-11}{2(3p+11)}}\\
&\qquad\qquad\qquad\qquad\times
 \Bigl(\|\na|\eta|^{\f{p}2}\|_{L^{2}}^{2}+
\|\na|V|^{\f q2}\|_{L^2}^{2}+
\int_{\R^3}\f{|V|^{q}}{r^2}\,dx\Bigr).
\end{split}\end{equation}
}
\end{lem}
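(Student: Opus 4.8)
The plan is to perform a weighted $L^q$ energy estimate on the $V$-equation from \eqref{etaV} and then control the resulting forcing term $\frac{2V\pa_zV}{r^{2\e}}$ by the dissipation via a chain of interpolation inequalities. First I would multiply the second equation of \eqref{etaV} by $|V|^{q-2}V$ and integrate over $\R^3$ with the measure $r\,dr\,dz$. The transport term $(u^r\pa_r+u^z\pa_z)V$ contributes nothing because $\wt u$ is divergence free, and the two terms coming from $(2-\e)\frac{u^rV}{r}$ together with the partial-integration remainder of the drift can be arranged to cancel (this is the usual structure behind Lemma \ref{lem2.1.2}); the Laplacian-type term $(\Delta+2(1-\e)\frac{\pa_r}{r})V$ produces, after integration by parts, the good term $\frac{4(q-1)}{q^2}\|\nabla|V|^{q/2}\|_{L^2}^2$ plus a multiple of $\int|V|^q/r^2$, and the zeroth-order term $(2\e-\e^2)\frac{V}{r^2}$ gives exactly $(2\e-\e^2)\int_{\R^3}\frac{|V|^q}{r^2}\,dx$. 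Collecting these yields the left-hand side of \eqref{3.11}, so that it remains to bound
\[
\Bigl|\int_{\R^3}\frac{V\,\pa_zV}{r^{2\e}}\,|V|^{q-2}V\,dx\Bigr|
= \frac1q\Bigl|\int_{\R^3}\frac{\pa_z(|V|^q)}{r^{2\e}}\,V\,dx\Bigr|,
\]
which after one more integration by parts in $z$ becomes a multiple of $\int_{\R^3}\frac{|V|^q\,\pa_zV}{r^{2\e}}\,dx$; writing $|V|^q\pa_zV\sim |V|^{q/2}\cdot\pa_z(|V|^{q/2})\cdot|V|^{q/2}$ one gets, by Cauchy–Schwarz, a bound of the form $\|\nabla|V|^{q/2}\|_{L^2}\cdot\bigl\|r^{-2\e}|V|^{q}\bigr\|_{L^2_{\text{weighted}}}$, up to rewriting in terms of the flat measure.

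The heart of the proof is then the interpolation estimate for the factor involving $r^{-2\e}|V|^q$. Here I would use the change of variables relating $V=\ut/r^{1-\e}$, $\eta=\ot/r$ and the quantity $r\ut$, so that powers of $r$ can be redistributed among the three quantities whose norms appear on the right of \eqref{3.11}: $\|r\ut\|_{L^{\frac{5}{2-\e}\frac{p}{(p-1)(p+2)}}}$, $\||V|^{q/2}\|_{L^2}$ and $\|\eta\|_{L^p}$. The strategy is to apply a Gagliardo–Nirenberg inequality in $\R^3$ to $|V|^{q/2}$ (and, where the combination $V\cdot\pa_zV$ forces a $\pa_z$ of $|V|^{q/2}$, to bring in $\|\nabla|V|^{q/2}\|_{L^2}$ and $\||V|^{q/2}/r\|_{L^2}$), combined with Hölder in the $r$-variable to absorb the weight $r^{-2\e}$ at the cost of a power of $\|r\ut\|_{L^{\text{large}}}$; the freedom in choosing the Lebesgue exponent for $r\ut$ (recall $r\ut\in L^\infty\cap L^{1/(p-1)}$, so $r\ut$ lies in every $L^m$ with $m\ge 1/(p-1)$ by Lemma \ref{lem2.1.2}) is what makes the specific exponent $\frac{5}{2-\e}\frac{p}{(p-1)(p+2)}$ appear, and one checks the scaling (dimensional) balance of all the exponents $\frac{15}{(2-\e)(3p+11)}$, $\frac{23-17p-3p^2}{p(3p+11)}$, $\frac{3p^2+23p-11}{2(3p+11)}$ is forced once the relation $(2-\e)q=3p$ and the definition $\e=\frac{-9p^2+21p-4}{24p-2}$ are imposed. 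Finally I would collect the dissipative factors into the single bracket $\|\nabla|\eta|^{p/2}\|_{L^2}^2+\|\nabla|V|^{q/2}\|_{L^2}^2+\int_{\R^3}|V|^q/r^2\,dx$ on the right, using Young's inequality to convert the Cauchy–Schwarz product of gradient norms into a sum.

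The main obstacle I anticipate is purely bookkeeping but delicate: getting the six rational exponents exactly right. One must simultaneously satisfy (i) the Hölder relation tying the three integrability indices, (ii) the Gagliardo–Nirenberg relation tying the power of $\|\nabla|V|^{q/2}\|_{L^2}$ to the powers of the lower-order norms, and (iii) the scaling constraint coming from the critical/nearly-critical nature of the spaces — it is precisely the requirement that the total homogeneity degree in the dissipation bracket be exactly $2$ (so the bracket can be absorbed, up to the $\|r\ut\|$ smallness, after the subsequent Grönwall-type step in Proposition \ref{nearglobal}) that pins down $\e$ as the stated function of $p$ and forces the window $\e\in[\frac{3251}{9280},\frac{4}{11}[$ as $p$ ranges over $]1,\frac{21}{20}]$. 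A secondary technical point is justifying the integrations by parts and the cancellation of the drift and $\frac{u^rV}{r}$ terms rigorously for the "smooth enough" solution, but since we are assuming a smooth solution on $[0,T]$ and all weights $r^\alpha$ with $\alpha\ge -2$ appearing are integrable near $r=0$ against $r\,dr$, this is routine.
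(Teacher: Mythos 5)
Your proposal misidentifies the term that has to be estimated, and this is a structural error rather than a bookkeeping slip. Looking at the $V$-equation in \eqref{etaV}, there is no forcing term of the form $\frac{2V\,\pa_z V}{r^{2\e}}$ in it; that term appears in the \emph{$\eta$}-equation and is handled in Lemma \ref{lem3.2}, not here. Moreover, the linear term $(2-\e)\frac{u^r V}{r}$ does \emph{not} cancel against the partial-integration remainder of the drift. What actually happens is: the transport integral $\int(u^r\pa_r+u^z\pa_z)|V|^q\,dx$ vanishes by incompressibility of $ru^r,u^z$; the $\frac{2(1-\e)}{q}\int r^{-1}\pa_r|V|^q\,dx$ term produces a harmless boundary term $\int|V|^q|_{r=0}\,dz\geq0$; and one is left with the single bad term $(2-\e)\int\frac{u^r}{r}|V|^q\,dx$ on the left. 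Thus the whole content of Lemma \ref{lem3.1} is to bound $\int\big|\frac{u^r}{r}\big|\,|V|^q\,dx$. Your plan, which integrates by parts in $z$ on a $\frac{V\pa_z V}{r^{2\e}}$-type integral and then applies Gagliardo--Nirenberg and H\"older to a weighted expression in $V$ alone, is estimating a quantity that simply does not arise.

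Because of this, your plan also omits the key analytical input: Proposition \ref{lem2.6.1}, which converts $\big\|\frac{u^r}{r}\big\|_{L^{q_1}}$ into $\|\eta\|_{L^p}^{\lambda}\|\eta\|_{L^{3p}}^{1-\lambda}$. That proposition is what injects $\|\eta\|_{L^p}$ and $\|\nabla|\eta|^{p/2}\|_{L^2}$ (via Sobolev embedding on $\|\eta\|_{L^{3p}}$) into the right-hand side of \eqref{3.11}. The remaining factor $\||V|^q\|_{L^{q_2}}$ is then split by H\"older using the decomposition $|V|^{q q_2}=|V|^{q\vartheta}\big(\frac{|V|^q}{r^2}\big)^{\s}|r^{2-\e}V|^{\frac{2\s}{2-\e}}$, which is where $\|r\ut\|_{L^{\frac{5}{2-\e}\cdot\frac{p}{(p-1)(p+2)}}}$, $\int\frac{|V|^q}{r^2}\,dx$, and (via Gagliardo--Nirenberg) $\||V|^{q/2}\|_{L^2}$, $\|\nabla|V|^{q/2}\|_{L^2}$ appear with the stated exponents, after a final Young's inequality. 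None of this architecture — the reduction to $\frac{u^r}{r}$, the use of Proposition \ref{lem2.6.1}, and the three-way H\"older split that brings in the $r^2$-weighted dissipation — is present in your plan, so it could not be completed as written.
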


\begin{lem}\label{lem3.2}
{\sl Under the assumptions of Proposition \ref{nearglobal}, for any $t\in ]0,T],$ there holds
\begin{equation}\label{3.19}\begin{split}
&\frac1p\f{d}{dt}\|\eta(t)\|_{L^p}^p
+\frac{4(p-1)}{p^2}\|\nabla|\eta|^{\frac p2}\|_{L^2}^2\\
&\qquad\lesssim \|r\ut\|_{L^{\frac{2}{3(p-1)^2}}}^{\f1{6p}}
\|\eta\|^{\frac{p-1}4}_{L^p}\Bigl(\|\nabla|\eta|^{\frac p2}\|_{L^2}^2+\|\nabla|V|^{\frac q2}\|_{L^2}^2
+\Bigl\|\frac{|V|^q}{r^2}\Bigr\|_{L^1}\Bigr).
\end{split}\end{equation}}
\end{lem}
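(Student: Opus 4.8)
The plan is to run an $L^p$ energy estimate for $\eta$ directly on the first equation of \eqref{etaV}, and then to absorb the swirl source term into the dissipation at the expense of the small quantities $\|r\ut\|_{L^{2/(3(p-1)^2)}}$ and $\|\eta\|_{L^p}$. Concretely, I would multiply the $\eta$-equation
\[\pa_t\eta+(\ur\pa_r+\uz\pa_z)\eta-\bigl(\Delta+\tfrac 2r\pa_r\bigr)\eta=\frac{2V\pa_zV}{r^{2\e}}\]
by $|\eta|^{p-2}\eta$ and integrate over $\R^3$. Since $\widetilde u=(\ur,\uz)$ is divergence free in $\R^3$, the convection term contributes nothing, exactly as recalled in the introduction. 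Writing $\Delta+\frac 2r\pa_r=\pa_r^2+\frac 3r\pa_r+\pa_z^2$ and integrating by parts, the diffusion term yields $\frac{4(p-1)}{p^2}\|\nabla|\eta|^{p/2}\|_{L^2}^2$ together with a boundary contribution on the symmetry axis $\{r=0\}$ which is nonnegative and is simply discarded. Hence \eqref{3.19} reduces to the bound
\[\Bigl|\int_{\R^3}\frac{2V\pa_zV}{r^{2\e}}\,|\eta|^{p-2}\eta\,dx\Bigr|\lesssim\|r\ut\|_{L^{\frac{2}{3(p-1)^2}}}^{\frac{1}{6p}}\,\|\eta\|_{L^p}^{\frac{p-1}{4}}\,\Bigl(\|\nabla|\eta|^{\frac p2}\|_{L^2}^2+\|\nabla|V|^{\frac q2}\|_{L^2}^2+\bigl\|\tfrac{|V|^q}{r^2}\bigr\|_{L^1}\Bigr).\]

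For the source term I would \emph{not} integrate by parts in $z$ — that would create a negative power of $|\eta|$ — but instead use the pointwise identity $|V\pa_zV|=\frac 2q|V|^{2-q/2}\,|\pa_z(|V|^{q/2})|$, so that with $g:=|V|^{q/2}$ and $h:=|\eta|^{p/2}$ the left-hand side above is dominated by $\int_{\R^3}g^{\frac 4q-1}\,|\nabla g|\,h^{2-\frac 2p}\,r^{-2\e}\,dx$. Next, using $V=\ut r^{\e-1}$ one has $|V|^{2-q}=|r\ut|^{2-q}\,r^{(\e-2)(2-q)}$; substituting this for the surplus factor $g^{\frac 4q-2}=|V|^{2-q}$ and invoking the defining relation $(2-\e)q=3p$, the total power of $r$ collapses to $(\e-2)(2-q)-2\e=3p-4$, i.e.\ essentially $r^{-1}$. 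This is the key simplification: it lets one peel off a factor $|V|^{q/2}/r$ and reduces matters to estimating $\int_{\R^3}\frac gr\,|\nabla g|\;|r\ut|^{2-q}\;h^{2-\frac 2p}\;r^{3p-3}\,dx$, in which $r^{3p-3}=(r^{p-1})^3$ is a harmless weight. I would then bound this by Hölder's inequality, decomposing the integrand so as to generate: (i) the norms $\|\nabla|V|^{q/2}\|_{L^2}$ and $\||V|^{q/2}/r\|_{L^2}$, possibly in the interpolated form $\|g/r^\sigma\|_{L^{s_\sigma}(\R^3)}\le\|g/r\|_{L^2}^\sigma\|g\|_{L^6}^{1-\sigma}$ ($\sigma\in[0,1]$, $\frac{1}{s_\sigma}=\frac{2\sigma+1}{6}$), which holds for axisymmetric $g$ by Hölder plus Sobolev and is where the Hardy-type term $\||V|^q/r^2\|_{L^1}=\||V|^{q/2}/r\|_{L^2}^2$ enters; (ii) the norm $\|\nabla|\eta|^{p/2}\|_{L^2}$ and $\|\eta\|_{L^p}=\|h\|_{L^2}^{2/p}$, obtained from $h$ through the Sobolev embedding $\dot H^1(\R^3)\hookrightarrow L^6(\R^3)$ and interpolation; (iii) the norm $\|r\ut\|_{L^{2/(3(p-1)^2)}}$ from the factor $|r\ut|^{2-q}$, using in addition the conservation law $\|r\ut(t)\|_{L^\infty}\le\|r\ut_0\|_{L^\infty}$ of Lemma \ref{lem2.1.2} to dispose of any surplus power of $r\ut$ against an absorbable constant. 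A final application of Young's inequality then gives \eqref{3.19}.

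The main obstacle is that the whole argument rests on a delicate simultaneous choice of the Hölder exponents: their reciprocals must sum to $1$, they must lie in the admissible ranges of the Sobolev and Hardy inequalities above, they must leave the three dissipative norms reconstituted with the right combined homogeneity so that Young's inequality closes without any remainder term, and they must leave the remaining factor equal precisely to $\|r\ut\|_{L^{2/(3(p-1)^2)}}^{1/(6p)}\|\eta\|_{L^p}^{(p-1)/4}$. It is exactly this system of constraints that forces the value $\e=\frac{-9p^2+21p-4}{24p-2}$ (equivalently $q=\frac{2(12p-1)}{3(p+3)}$) and the integrability index $\frac{2}{3(p-1)^2}$; one then checks that both degenerate to the critical scaling as $p\to1$, which accounts for all the $(p-1)$ factors on the right-hand side of \eqref{3.19}.
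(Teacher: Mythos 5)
Your plan tracks the paper's proof very closely: the same $L^p$ energy estimate on the $\eta$-equation, the same use of the Dirichlet boundary condition and of $\pa_r(ru^r)+\pa_z(ru^z)=0$ to drop the convection and produce the boundary term, the same refusal to integrate by parts in $z$, the same pointwise rewriting $|V\pa_z V|=\tfrac2q|V|^{2-q/2}\bigl|\pa_z|V|^{q/2}\bigr|$, and the same strategy of splitting the remaining weight among $\bigl\|\nabla|V|^{q/2}\bigr\|_{L^2}$, $\bigl\||V|^{q/2}/r\bigr\|_{L^2}$, $\bigl\|\nabla|\eta|^{p/2}\bigr\|_{L^2}$, $\|\eta\|_{L^p}$ and $\|r\ut\|_{L^{2/(3(p-1)^2)}}$ via H\"older, Gagliardo--Nirenberg and Young. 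The identity $(\e-2)(2-q)-2\e=3p-4$ you derive from $(2-\e)q=3p$ is correct, and your interpolation inequality $\|g/r^\sigma\|_{L^{s_\sigma}}\le\|g/r\|_{L^2}^\sigma\|g\|_{L^6}^{1-\sigma}$ with $1/s_\sigma=(2\sigma+1)/6$ is also correct. So the strategy is sound and is essentially the paper's.

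The genuine gap is that you state the decisive step rather than proving it. The entire content of the lemma is the \emph{explicit, verifiable} exponent bookkeeping: the paper writes $\frac{|V|^{2-q/2}}{r^{2\e}}=\bigl|\frac{|V|^q}{r^2}\bigr|^\alpha\,|r\ut|^{1/(6p)}\,|V|^{q\beta/2}$ with $\alpha=\frac1{6p}\bigl(1-\frac\e2\bigr)+\e$, applies Gagliardo--Nirenberg to $\||V|^{q\beta/2}\|_{L^r}$ with $\frac1r=\frac\beta2+\frac{2\alpha}3-\frac{3-p}{6p}$, checks that $\alpha+\frac1r=\frac1{2p}-\frac{(p-1)^2}{4p}$ \emph{exactly when} $\e=\frac{-9p^2+21p-4}{24p-2}$ (which is where the Lebesgue index $\frac2{3(p-1)^2}$ comes from), checks $2\alpha+\beta-\frac{3-p}{2p}=0$ so the low-order factor of $|V|^{q/2}$ drops out, checks that the resulting interpolation index lies in $[2,6]$ for $p\in(1,21/20]$, and finally checks that $\frac{3(p-1)}{2p}+\bigl(\frac{3-p}{2p}+1-2\alpha\bigr)+2\alpha=2$ so that Young closes. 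You acknowledge this is ``a delicate simultaneous choice of exponents'' and assert that it forces the stated $\e$ and index, but you do not exhibit the choice nor verify admissibility, so the proof is incomplete at precisely the step where it is hardest. Two secondary issues point the same way: the residual weight $r^{3p-3}$ in your reformulation is not actually harmless and is not present in the paper's decomposition (the paper distributes all of $r^{-2\e}$ into the three factors, with $\alpha<\tfrac12$); and invoking $\|r\ut\|_{L^\infty}\le\|r\ut_0\|_{L^\infty}$ to ``dispose of any surplus power'' would yield an inequality with an extra $\|r\ut_0\|_{L^\infty}$-dependent constant, which is not what \eqref{3.19} claims even if it would still be usable downstream.
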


Let us admit the above lemmas and continue our proof of Proposition \ref{nearglobal}.

\begin{proof}[Proof of Proposition \ref{nearglobal}]
By virtue of Lemma \ref{lem2.1.2}, we get, by summarizing
\eqref{3.11} for $\e=\frac{-9p^2+21p-4}{24p-2}$ with \eqref{3.19},  that
\begin{equation}\begin{split}\label{3.20}
&\f{d}{dt}\bigl(\|V(t)\|_{L^q}^q+\|\eta(t)\|_{L^p}^p\bigr)
+\frac{4(p-1)}{p}\|\nabla|\eta|^{\frac p2}\|_{L^2}^2\\
&\qquad\qquad+\frac{4(q-1)}{q}\|\nabla|V|^{\frac q2}\|_{L^2}^2
+\frac{3p(-9p^2+21p-4)}{24p-2}\Bigl\|\frac{|V|^q}{r^2}\Bigr\|_{L^1}\\
&\leq C
\Bigl(\|r\ut_0\|_{L^{\frac{10(12p-1)}{9(p-1)(p+2)(p+3)}}}^{\f{10(12p-1)}{3p(p+3)(3p+11)}}
\||V|^{\f q2}\|_{L^2}^{\f{23-17p-3p^2}{p(3p+11)}}
\|\eta\|_{L^p}^{\frac{3p^2+23p-11}{2(3p+11)}}
+\|r\ut_0\|_{L^{\frac{2}{3(p-1)^2}}}^{\f1{6p}}
\|\eta\|^{\frac{p-1}4}_{L^p}\Bigr)\\
&\qquad\qquad\qquad\qquad\qquad\qquad\qquad\qquad\qquad\times \Bigl(\|\nabla|\eta|^{\frac p2}\|_{L^2}^2+\|\nabla|V|^{\frac q2}\|_{L^2}^2
+\Bigl\|\frac{|V|^q}{r^2}\Bigr\|_{L^1}\Bigr).
\end{split}\end{equation}
Let $M_0\eqdefa\|V_0\|_{L^q}^q+\|\eta_0\|_{L^p}^p$, and let $T'>0$ be determined by
\begin{equation}\label{3.30}
T'\eqdefa\sup\Bigl\{\ t\in [0,T]:\  \sup_{t'\in [0,t]}\bigl(\|V(t)\|_{L^q}^q+\|\eta(t)\|_{L^p}^p\bigr)
\leq 2M_0\ \Bigr\}.
\end{equation}
If $T'<T,$ then   for $t\leq T',$  we deduce  from the \eqref{3.20} that
\begin{equation*}\begin{split}
&\f{d}{dt}\bigl(\|V(t)\|_{L^q}^q+\|\eta(t)\|_{L^p}^p\bigr)
+(p-1)\|\nabla|\eta|^{\frac p2}\|_{L^2}^2+\|\nabla|V|^{\frac q2}\|_{L^2}^2
+\Bigl\|\frac{|V|^q}{r^2}\Bigr\|_{L^1}\\
&\quad \leq C
\Bigl((2M_0)^{\frac{3(p+2)}{p(3p+11)}}
\|r\ut_0\|_{L^{\frac{10(12p-1)}{9(p-1)(p+2)(p+3)}}}^{\f{10(12p-1)}{3p(p+3)(3p+11)}}
+(2M_0)^{\frac{p-1}{4p}}
\|r\ut_0\|_{L^{\frac{2}{3(p-1)^2}}}^{\f1{6p}}\Bigr)\\
&\qquad\qquad\qquad\qquad\qquad\qquad\quad\times\Bigl(\|\nabla|\eta|^{\frac p2}\|_{L^2}^2+\|\nabla|V|^{\frac q2}\|_{L^2}^2
+\Bigl\|\frac{|V|^q}{r^2}\Bigr\|_{L^1}\Bigr),
\end{split}\end{equation*}
which together with  \eqref{nearglobal2} ensures that
 for any $t$ in $[0,T']$,
\begin{equation}
\f{d}{dt}\bigl(\|V(t)\|_{L^q}^q+\|\eta(t)\|_{L^p}^p\bigr)
+\frac{p-1}2\|\nabla|\eta|^{\frac p2}\|_{L^2}^2+\frac12\|\nabla|V|^{\frac q2}\|_{L^2}^2
+\frac12\Bigl\|\frac{|V|^q}{r^2}\Bigr\|_{L^1}\leq0.
\end{equation}
This  in particular gives rise to
\begin{equation}\label{3.32}
\|V(t)\|_{L^q}^q+\|\eta(t)\|_{L^p}^p
\leq M_0\quad \mbox{for any}\quad t\leq T'.
\end{equation}
This contradicts with the definition of $T'$ given by \eqref{3.30}. As a result, it comes out $T'=T$, and there holds
\eqref{nearglobal3} for any $t\in [0,T].$ this completes the proof of the proposition.
\end{proof}

Let us now turn to the proof of Lemmas \ref{lem3.1} and \ref{lem3.2}.

\begin{proof}[Proof of Lemma \ref{lem3.1}]
For any
$p\in]1,\frac{21}{20}]$ and $q$ given by \eqref{nearglobal1}, we get, by multiplying
the second equation of \eqref{etaV} by $|V|^{q-2}V$ and then integrating the
resulting equality over $\R^3,$ that
$$\longformule{
\frac1{q}\frac{d}{dt}\|V(t)\|_{L^{q}}^{q}+\frac1{q}\int_{\R^3}(u^r\pa_r+u^z\pa_z)|V|^{q}\,dx+(2-\e)\int_{\R^3}\f{u^r}r|V|^{q}\,dx}{{}
-\int_{\R^3}\D V | |V|^{q-2}V\,dx-\frac{2(1-\e)}{q}\int_{\R^3}\pa_r|V|^{q}\frac1r\,dx+(2\e-\e^2)\int_{\R^3}\frac{|V|^{q}}{r^2}\,dx=0.
}
$$
Using the fact that $\pa_r(ru^r)+\pa_z(ru^z)=0$,
which implies $\int_{\R^3}(u^r\pa_r+u^z\pa_z)|V|^{q}\,dx=0$, and the homogeneous Dirichlet boundary condition for $u^r$ on $r=0,$ we deduce
\begin{equation*}
\begin{split}
\frac1{q}\frac{d}{dt}\|V(t)\|_{L^{q}}^{q}+\bigl(2-\e\bigr)&\int_{\R^3}\f{u^r}r|V|^{q}\,dx
+\frac{2(1-\e)}{q}\int_{-\infty}^{+\infty} |V|^{q}\big|_{r=0}\,dz\\
&+\frac{4(q-1)}{q^2}\bigl\|\nabla|V|^{\frac q2}\bigr\|_{L^2}^2+(2\e-\e^2)\int_{\R^3}\frac{|V|^{q}}{r^2}\,dx=0,
\end{split}
\end{equation*}
which implies
\begin{equation}\label{3.9}
\frac1{q}\frac{d}{dt}\|V(t)\|_{L^{q}}^{q}+
\frac{4(q-1)}{q^2}\bigl\|\nabla|V|^{\frac q2}\bigr\|_{L^2}^2+(2\e-\e^2)\int_{\R^3}\frac{|V|^{q}}{r^2}\,dx
\lesssim\int_{\R^3}\bigl|\f{u^r}r\bigr|\cdot|V|^{q}\,dx
\end{equation}

Let us take $q_1,q_2$ satisfying
$
 \frac{1}{q_1}+\frac{1}{q_2}=1,$
and temporarily assume that $q_1\in \bigl]\f{3p}{3-p},\infty\bigr[,$
so that it follows from Lemma \ref{lem2.6.1} and Sobolev embedding Theorem  that
\beno
\begin{split}
\int_{\R^3}\bigl|\frac{u^r}{r}\bigr|\cdot|V|^{q} dx\leq &\bigl\|\frac{u^r}{r}\bigr\|_{L^{q_1}}\||V|^{q}\|_{L^{q_2}}\\
\lesssim &\|\eta\|_{L^p}^{\frac{p-1}{2}+\frac{3p}{2q_1}}\|\eta\|_{L^{3p}}^{\frac{3-p}{2}-\frac{3p}{2q_1}}\||V|^{q}\|_{L^{q_2}}\\
\lesssim &\|\eta\|_{L^p}^{\frac{p-1}{2}+\frac{3p}{2q_1}}\|\na|\eta|^{\f{p}2}\|_{L^{2}}^{2\left(\frac{3-p}{2p}-\frac{3}{2q_1}\right)}\||V|^{q}\|_{L^{q_2}} .
\end{split}
\eeno
Take $q_2=\vartheta+\s+\f{2\s}{3p},$ with $\vartheta>0$ and $0<\s<1$ to be determined later, then we have
\begin{equation}
\begin{split}
\||V|^{q}\|_{L^{q_2}}=&\Bigl(\int_{\R^3}|V|^{q\vartheta}\Bigl(\f{|V|^{q}}{r^2}\Bigr)^{\s}|r^{2-\e}V|^{\f{2\s}{2-\e}}\,dx\Bigr)^{\f1{q_2}}\\
\leq &\|r\ut\|_{L^{\f{2\s}{2-\e}\cdot\frac{p}{p-1}}}^{\f{2\s}{q_2(2-\e)}}
\||V|^{\f q2}\|_{L^{\f{2p\vartheta}{1-p\sigma}}}^{\f{2\vartheta}{q_2}}\Bigl(\int_{\R^3}\f{|V|^{q}}{r^2}\,dx\Bigr)^{\f{\s}{q_2}}\\
\leq &\|r\ut\|_{L^{\f{2\s}{2-\e}\cdot\frac{p}{p-1}}}^{\f{2\s}{q_2(2-\e)}}
\||V|^{\f q2}\|_{L^2}^{\f{-\vartheta-3\s}{q_2}+\f{3}{pq_2}}\|\na|V|^{\f q2}\|_{L^2}^{\f{3(\vartheta+\s)}{q_2}-\f{3}{pq_2}}
\Bigl(\int_{\R^3}\f{|V|^{q}}{r^2}\,dx\Bigr)^{\f{\s}{q_2}},
\end{split}
\end{equation}
where in the last step, we use the interpolation inequality provided that
\begin{equation}
\f{2p\vartheta}{1-p\sigma}\in[2,6],
\end{equation}
which will be verified later. Then we get, by applying Young's inequality, that
\begin{equation}\label{3.10}
\begin{split}
\int_{\R^3}\bigl|\frac{u^r}{r}\bigr||V|^q dx \lesssim &\|r\ut\|_{L^{\f{2\s}{2-\e}\cdot\frac{p}{p-1}}}^{\f{2\s}{q_2(2-\e)}}
\||V|^{\f q2}\|_{L^2}^{\f{-\vartheta-3\s}{q_2}+\f{3}{pq_2}}
\|\eta\|_{L^p}^{\frac{p-1}{2}+\frac{3p}{2q_1}}\\
&\times\|\na|\eta|^{\f{p}2}\|_{L^{2}}^{2\left(\frac{3-p}{2p}-\frac{3}{2q_1}\right)}
\|\na|V|^{\f q2}\|_{L^2}^{\f{3(\vartheta+\s)}{q_2}-\f{3}{pq_2}}
\Bigl(\int_{\R^3}\f{|V|^{q}}{r^2}\,dx\Bigr)^{\f{\s}{q_2}}\\
\lesssim&\|r\ut\|_{L^{\f{2\s}{2-\e}\cdot\frac{p}{p-1}}}^{\f{2\s}{q_2(2-\e)}}
\||V|^{\f q2}\|_{L^2}^{\f{-\vartheta-3\s}{q_2}+\f{3}{pq_2}}
\|\eta\|_{L^p}^{\frac{p-1}{2}+\frac{3p}{2q_1}}\\
&\times
 \Bigl(\|\na|\eta|^{\f{p}2}\|_{L^{2}}^{2}+
\|\na|V|^{\f q2}\|_{L^2}^{2}+
\int_{\R^3}\f{|V|^{q}}{r^2}\,dx\Bigr),
\end{split}
\end{equation}
provided that
\beno
\f{3-p}{2p}-\f3{2q_1}+\f{3(\vartheta+\s)}{2q_2}-\f{3}{2pq_2}+\f\s{q_2}=1,
\eeno
and this can be satisfied by choosing
\begin{equation}\label{index}\begin{split}
&\sigma=\frac{5}{2(p+2)},\ \vartheta=1-\frac5{6p},\ \mbox{and}\ \  q_2=\vartheta+\s+\f{2\s}{3p}=\f{3p+11}{3(p+2)}.
\end{split}\end{equation}
It is easy to verify that for any $p\in\bigl]1,\frac{21}{20}\bigr]$,
the corresponding
$\f{2p\vartheta}{1-p\sigma}=\frac{2(p+2)(6p-5)}{3(4-3p)}$ is exactly in $[2,6]$,
and $q_2$ is exactly in $\bigl]1,\frac{3p}{4p-3}\bigr[$,
thus the conjugate number $q_1=\bigl(1-\frac{1}{q_2}\bigr)^{-1}>\f{3p}{3-p}$.
Thus all the above calculations go through.

With the indexes given by \eqref{index}, by inserting the Estimate \eqref{3.10} into \eqref{3.9},
we achieve \eqref{3.11}. This completes the proof of Lemma \ref{lem3.1}. \end{proof}

\begin{proof}[Proof of Lemma \ref{lem3.2}]
Analogue to the proof of Lemma \ref{lem3.1},
we get, by first multiplying the $\eta$ equation of \eqref{etaV} by $|\eta|^{p-2}\eta$ and then integrating the resulting
equality over $\R^3,$  that
$$\longformule{
\frac1p\f{d}{dt}\|\eta(t)\|_{L^p}^p+\f1p\int_{\R^3}(u^r\pa_r+u^z\pa_z)|\eta|^p\,dx}{{}-\int_{\R^3}\D \eta  | |\eta|^{p-2}\eta\,dx
-\f2p\int_{\R^3}\f{\pa_r|\eta|^p}{r}\,dx
=\int_{\R^3}\frac{2V \pa_z V}{r^{2\varepsilon}}|\eta|^{p-2}\eta dx.}
$$
Once again due to $\pa_r(ru^r)+\pa_z(ru^z)=0$ and $u^r|_{r=0}=0$, we get
\begin{equation}\label{3.12}
\frac1p\frac{d}{dt}\|\eta(t)\|_{L^p}^p
+\frac{4(p-1)}{p^2}\|\nabla|\eta|^{\frac p2}\|_{L^2_x}^2
+\frac 2p\int_{-\infty}^{+\infty}|\eta|^p|_{r=0} dz
=\int_{\R^3}\frac{2V \pa_z V}{r^{2\varepsilon}}|\eta|^{p-2}\eta dx.
\end{equation}
It is easy to observe that
\beno
\begin{split}
\bigl|\int_{\R^3}\frac{2V \pa_z V}{r^{2\varepsilon}}|\eta|^{p-2}\eta dx\bigr|\leq &\f2q\int_{\R^3}|\eta|^{p-1}\bigl|\pa_z|V|^{\f{q}2}\bigr|
\f{|V|^{2-\f{q}2}}{r^{2\e}}\,dx\\
\lesssim&\bigl\||\eta|^{p-1}\bigr\|_{L^\frac{2p}{p-1}}\bigl\|\pa_z|V|^{\frac q2}\bigr\|_{L^2}
\bigl\|\frac{|V|^{2-\frac q2}}{r^{2\varepsilon}}\bigr\|_{L^{2p}}.
\end{split}
\eeno
It follows from Sobolev embedding Theorem that
\beno
\begin{split}
\bigl\||\eta|^{p-1}\bigr\|_{L^\frac{2p}{p-1}}=\||\eta|^{\f{p}2}\|_{L^4}^{\f{2(p-1)}p}\lesssim &\||\eta|^{\f{p}2}\|_{L^2}^{\f{p-1}{2p}}
\|\na|\eta|^{\f{p}2}\|_{L^2}^{\f{3(p-1)}{2p}}\\
=&\|\eta\|_{L^p}^{\f{p-1}4}\|\na|\eta|^{\f{p}2}\|_{L^2}^{\f{3(p-1)}{2p}}.
\end{split}
\eeno
As a result, we obtain
\begin{equation}\label{3.13}\begin{split}
&\bigl|\int_{\R^3}\frac{2V \pa_z V}{r^{2\varepsilon}}|\eta|^{p-2}\eta dx\bigr|\lesssim
\|\eta\|^{\frac{p-1}4}_{L^p}\|\nabla|\eta|^{\frac p2}\|_{L^2}^\frac{3(p-1)}{2p}
\|\pa_z|V|^{\frac q2}\|_{L^2}
\Bigl\|\frac{|V|^{2-\frac q2}}{r^{2\varepsilon}}\Bigr\|_{L^{2p}}.
\end{split}\end{equation}
To handle the term $\Bigl\|\frac{|V|^{2-\frac q2}}{r^{2\varepsilon}}\Bigr\|_{L^{2p}},$  we split $\frac{|V|^{2-\frac q2}}{r^{2\varepsilon}}$ as
\begin{equation*}\label{3.22}
\frac{|V|^{2-\frac q2}}{r^{2\varepsilon}}=\left|\frac{|V|^q}{r^2}\right|^\alpha
\bigl|r^{2-\varepsilon}V\bigr|^{\f1{6p}} |V|^{\frac q2 \cdot\beta},
\end{equation*}
with $\alpha, \beta$ being determined by
\begin{equation*}\label{3.23}
\alpha=\f1{6p}\Bigl(1-\f\e2\Bigr)+\varepsilon \andf
\beta=\frac 2q\Bigl(2-\frac q2-q\alpha-\frac{1}{6p}\Bigr)=\f1{3p}\Bigl(7-\f2{3p}\Bigr)\Bigl(1-\f\e2\Bigr)-1-2\e.
\end{equation*}
It follows from Sobolev embedding Theorem that
\begin{equation}\label{3.14}
\begin{split}
\bigl\||V|^{\frac q2 \cdot\beta}\bigr\|_{L^{r}}\lesssim &
\bigl\|\nabla|V|^{\frac q2}\bigr\|_{L^2}^{\frac{3-p}{2p}-2\alpha}
\bigl\||V|^{\frac q2}\bigr\|_{L^2}^{2\alpha+\beta-\frac{3-p}{2p}}\with\\
\f1r=&\f{\beta}2+\f{2\alpha}3-\f{3-p}{6p}=\f1{18p}\Bigl(14-\f{23}2\e-\f{2-\e}p\Bigr)-\f13(1+\e).
\end{split}
\end{equation}
It is easy to verify that
$\alpha+\frac {1}{r}=\frac{1}{2p}-\frac{(p-1)^2}{4p}$ provided selecting
\begin{equation}\label{3.15}
\e=\frac{-9p^2+21p-4}{24p-2}\, ,
\end{equation}
which belongs to $\bigl[\frac{3251}{9280},\frac{4}{11}\bigr[$ whenever $p\in\bigl]1,\frac{21}{20}\bigr]$.

Moreover, under such choice of indexes, the term $\bigl\||V|^{\frac q2}\bigr\|_{L^2}^{2\alpha+\beta-\frac{3-p}{2p}}$
in \eqref{3.14} disappears. Then we get, by applying H\"{o}lder's inequality, that
\begin{equation*}\label{3.27}\begin{split}
\Bigl\|\frac{|V|^{2-\frac q2}}{r^{2\varepsilon}}\Bigr\|_{L^{2p}}
&\leq\Bigl\|\Bigl(\frac{|V|^q}{r^2}\Bigr)^{\alpha}\Bigr\|_{L^{\frac{1}{\alpha}}}
\bigl\||V|^{\frac q2 \cdot\beta}\bigr\|_{L^{r}}
\bigl\||r^{2-\varepsilon}V|^{\f1{6p}}\bigr\|_{L^{\frac{4p}{(p-1)^2}}}\\
&\leq\Bigl\|\frac{|V|^q}{r^2}\Bigr\|^{\alpha}_{L^1}
\bigl\|\nabla|V|^{\frac q2}\bigr\|_{L^2}^{\frac{3-p}{2p}-2\alpha}
\|r\ut\|_{L^{\frac{2}{3(p-1)^2}}}^{\f1{6p}}.\\
\end{split}\end{equation*}
Inserting the above inequality into \eqref{3.13} gives rise to
\beno
\begin{split}
\Bigl|\int_{\R^3}\frac{2V \pa_z V}{r^{2\varepsilon}}|\eta|^{p-2}\eta dx\Bigr|\lesssim
\|\eta\|^{\frac{p-1}4}_{L^p}\|\nabla|\eta|^{\frac p2}\|_{L^2}^\frac{3(p-1)}{2p}
\|\na|V|^{\frac q2}\|_{L^2}^{\frac{3-p}{2p}+1-2\alpha}
 \Bigl\|\frac{|V|^q}{r^2}\Bigr\|^{\alpha}_{L^1}
\|r\ut\|_{L^{\frac{2}{3(p-1)^2}}}^{\f1{6p}}.
\end{split}
\eeno
Note  that $\f{3(p-1)}{2p}+\bigl(\f{3-p}{2p}+1-2\alpha\bigr)+2\alpha=2,$ by substituting the above inequality into \eqref{3.12} and using Young's inequality,
 we achieve \eqref{3.19}. This completes the proof of Lemma \ref{lem3.2}. \end{proof}

\section{Global well-posedness with critical initial data}\label{sec5}
In what follows,
 we shall always denote $U\eqdefa\frac{\ut}{r}$ and $W\eqdefa r^{-\frac{7}{11}}\ut$.
\begin{lem}\label{lem5.1}
{\sl Under the assumption of Theorem \ref{thmmain}, for any $p\in[1,\f {21}{20}]$,
$\e=\frac{-9p^2+21p-4}{24p-2}$, and $q=\frac{3p}{2-\varepsilon}$, the local solutions constructed in Theorem \ref{thmlocal} satisfy
\ben
\|\eta(t)\|_{L^p}&\leq&\frac{C}{t^{\frac{3(p-1)}{2p}}}\cdot
\bigl(L_{1,\frac{20}{3}}^5(T)+M_2^3(T)
+N^5_{\frac{20}{13},20}(T)\bigr)\quad \forall\ t\in ]0, T],\label{5.1}\\
\label{5.2}
\|U(t)\|_{L^{\frac32}}&\leq& C\bigl( \|U_0\|_{L^{\frac32}}
+L_{\frac{20}{17},2}(T)N_{\frac{20}{13},20}(T)\bigr)\quad \forall\ t\in ]0, T],\\
\label{5.3}
\|V^\e(t)\|_{L^q}&\leq&\frac{C}{t^{\frac{3(p-1)}{2q}}}
\Bigl(\|U_0\|_{L^{\f32}}^{\f9{11}}\|r\ut_0\|_{L^\infty}^{\f2{11}}+
L_{\frac{20}{17},2}^2(T)+N^2_{\frac{20}{13},20}(T)\Bigr)\quad \forall\ t\in ]0, T].
\een
}
\end{lem}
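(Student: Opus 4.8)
The plan is to obtain the three inequalities directly from the integral formulation \eqref{inteeqt}, combining the a priori bounds \eqref{cor3.1}--\eqref{cor3.2} already established, the smoothing estimates of Proposition \ref{lemsemi}, the conservation law of Lemma \ref{lem2.1.2}, the Biot--Savart bounds of Lemma \ref{lem2.2.2}, and H\"older interpolation. The one preliminary remark is that, switching between the measures $dr\,dz$ and $r\,dr\,dz$, the three quantities of the statement are weighted Lebesgue norms on $\Omega$ of $\ot$ or $\ut$:
\begin{equation*}
\|\eta(t)\|_{L^p}=\bigl\|r^{-(1-\frac1p)}\ot(t)\bigr\|_{L^p(\Omega)},\qquad
\|U(t)\|_{L^{3/2}}=\bigl\|r^{-\frac13}\ut(t)\bigr\|_{L^{3/2}(\Omega)},\qquad
\|V^\e(t)\|_{L^q}=\bigl\|r^{-\delta}\ut(t)\bigr\|_{L^q(\Omega)},
\end{equation*}
with $\delta\eqdefa(1-\e)-\frac1q$; for $p\in]1,\frac{21}{20}]$ one checks $0\le\delta\le1$ and $0\le1-\e\le1$, so all the exponent restrictions in \eqref{cor3.1}, \eqref{cor3.2}, Proposition \ref{lemsemi} and Lemma \ref{lem2.2.2} are available.

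First, \eqref{5.1} is exactly \eqref{cor3.2} applied with $\delta=1-\frac1p\in\,]0,\frac1{21}]\subset[0,\frac12]$: the time weight it produces is $t^{-(1+\frac{\delta}{2}-\frac1p)}=t^{-\frac{3(p-1)}{2p}}$ and the constant is $L_{1,\frac{20}{3}}^5(T)+M_2^3(T)+N^5_{\frac{20}{13},20}(T)$. Next, \eqref{5.2} follows from \eqref{cor3.1} with $q_1=q_2=\frac32$ and $\gamma=\delta=\frac13$: since $\|U_0\|_{L^{3/2}}=\|r^{-1/3}\ut_0\|_{L^{3/2}(\Omega)}$ the data term of \eqref{cor3.1} becomes $\|U_0\|_{L^{3/2}}$, and both time exponents $\frac12+\frac{\delta}{2}-\frac1{q_1}$ and $\frac{\delta-\gamma}{2}+\frac1{q_2}-\frac1{q_1}$ equal $0$, leaving precisely the stated bound.

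The substantive step is \eqref{5.3}. Here I would split $\ut(t)=S(t)\ut_0-\int_0^tS(t-s)\bigl(\div_*(\widetilde u\,\ut)-\frac{2\ur\ut}{r}\bigr)(s)\,ds$ and estimate $\|r^{-\delta}\ut(t)\|_{L^q(\Omega)}$ through its two pieces. For the Duhamel integral I would rerun, with $q$ in the role of $q_1$, the computation already carried out in the proof of \eqref{cor3.1} (split $\int_0^{t/2}+\int_{t/2}^t$, apply Proposition \ref{lemsemi}, bound $\widetilde u,\ur$ via Lemma \ref{lem2.2.2} and the remaining factors via the $L_p$ and $N_\kappa$ norms, then use Young's inequality), which yields a bound by $t^{-\frac{3(p-1)}{2q}}\bigl(L^2_{20/17,2}(T)+N^2_{20/13,20}(T)\bigr)$. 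For the free term I would interpolate: by H\"older in $L^q(\Omega)$,
\begin{equation*}
\bigl\|r^{-\delta}S(t)\ut_0\bigr\|_{L^q(\Omega)}\le
\bigl\|r^{-\delta_1}S(t)\ut_0\bigr\|_{L^{q_1}(\Omega)}^{9/11}\;\bigl\|rS(t)\ut_0\bigr\|_{L^\infty}^{2/11},
\end{equation*}
where $q_1=\frac{9q}{11}$ and $\delta_1=\frac{11\delta+2}{9}$ are forced by the two exponent identities; one checks $\frac13\le\delta_1\le1$ and $\frac32\le q_1$ throughout $p\in]1,\frac{21}{20}]$. Then $\|rS(t)\ut_0\|_{L^\infty}\le\|r\ut_0\|_{L^\infty}$ by the maximum principle underlying Lemma \ref{lem2.1.2} applied to the linear flow (i.e. to $\Gamma=rS(t)\ut_0$, which solves a drift-free diffusion equation with no zeroth-order term), while \eqref{semi3} with $\alpha=-\delta_1$, $\beta=\frac43$ and $p_0=\frac32$ gives $\|r^{-\delta_1}S(t)\ut_0\|_{L^{q_1}(\Omega)}\le Ct^{-(\frac12+\frac{\delta_1}{2}-\frac1{q_1})}\|U_0\|_{L^{3/2}}$; a short computation using $q(2-\e)=3p$ shows $\frac9{11}\bigl(\frac12+\frac{\delta_1}{2}-\frac1{q_1}\bigr)=\frac{(2-\e)(p-1)}{2p}=\frac{3(p-1)}{2q}$, so this piece contributes $t^{-\frac{3(p-1)}{2q}}\|U_0\|_{L^{3/2}}^{9/11}\|r\ut_0\|_{L^\infty}^{2/11}$. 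Adding the two pieces gives \eqref{5.3}.

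The step I expect to be delicate is the exponent bookkeeping in \eqref{5.3}: one must verify that the two H\"older identities, the admissibility conditions of \eqref{semi3} ($\alpha+\beta\le1$, $\alpha,\beta\ge-1$, $p_0\le q_1$) together with those underlying the reuse of the proof of \eqref{cor3.1}, and the inequalities $\frac13\le\delta_1$, $\frac32\le q_1$ can all be met simultaneously for every $p\in]1,\frac{21}{20}]$; and then that the three resulting time exponents collapse to $\frac{3(p-1)}{2p}$, $0$ and $\frac{3(p-1)}{2q}$ respectively --- collapses dictated by the scaling invariance of all the norms involved, but which must be checked by hand. Everything else is a routine reassembly of estimates already in hand.
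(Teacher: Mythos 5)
Your treatment of \eqref{5.1} and \eqref{5.2} is exactly what the paper does: take $\delta=1-\tfrac1p$ in \eqref{cor3.2} and $q_1=q_2=\tfrac32$, $\gamma=\delta=\tfrac13$ in \eqref{cor3.1}. For \eqref{5.3}, however, your route differs from the paper's, though it reaches the same bound. The paper keeps \eqref{cor3.1} as a black box: it observes that $1-\e-\tfrac1q\ge\tfrac1{11}$ and $q\ge\tfrac{11}{6}$ on $p\in]1,\tfrac{21}{20}]$, applies \eqref{cor3.1} once with $q_1=q$, $q_2=\tfrac{11}{6}$, $\delta=1-\e-\tfrac1q$, $\gamma=\tfrac1{11}$ (so the data term becomes $\|r^{-7/11}\ut_0\|_{L^{11/6}}$, with both time exponents in \eqref{cor3.1} collapsing to $\tfrac{3(p-1)}{2q}$ by scaling), and then applies H\"older \emph{once, to the initial datum}, giving $\|r^{-7/11}\ut_0\|_{L^{11/6}}\le\|U_0\|_{L^{3/2}}^{9/11}\|r\ut_0\|_{L^\infty}^{2/11}$. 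You instead reopen the Duhamel formula, re-derive the nonlinear-integral bound that is already encoded in \eqref{cor3.1}, and interpolate the \emph{free evolution} at time $t$ between $\|r^{-\delta_1}S(t)\ut_0\|_{L^{q_1}(\Omega)}$ (handled by \eqref{semi3}) and $\|rS(t)\ut_0\|_{L^\infty}$ (handled by the maximum principle for $\Gamma=rS(t)\ut_0$). Your exponent bookkeeping is correct --- the identity $\tfrac9{11}(\tfrac12+\tfrac{\delta_1}2-\tfrac1{q_1})=\tfrac{3(p-1)}{2q}$ checks out, as do the admissibility constraints --- and the maximum-principle step, while phrased slightly loosely ($\Gamma$ satisfies $\pa_t\Gamma=(\pa_r^2+\pa_z^2-\tfrac1r\pa_r)\Gamma$, which has a first-order drift but no zeroth-order term, which is what actually matters; alternatively \eqref{semi3} with $\alpha=1$, $\beta=0$ gives the same bound up to a constant), is valid. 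In short, your argument is a correct but more labor-intensive variant: the paper's choice of $(\gamma,q_2)$ lets one delay the H\"older interpolation to $t=0$ and thereby avoid both the re-derivation of the Duhamel estimate and the extra observation about $rS(t)\ut_0$.
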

\begin{proof}  Taking $\delta=1-\frac1p$ in \eqref{cor3.2} yields \eqref{5.1}. Likewise, taking $\delta=\gamma=\frac13$ and $~q_1=q_2=\frac32$  in \eqref{cor3.1} leads to \eqref{5.2}.

On the other hand, for any $p\in[1, {21}/{20}]$, due to the choice of $\e$ and $q$, we have
$$1-\e-\frac1q=\frac{9p^2-7}{24p-2}\geq\frac{1}{11},\quad q=\frac{24p-2}{3p+9}\geq\frac{11}{6}.$$
Noting that $\|V^\e(t)\|_{L^q}=\Bigl\|\frac{\ut}{r^{1-\e-\frac1q}}\Bigr\|_{L^q(\Omega)}$
and $\|r^{-\frac{7}{11}}\ut_0\|_{L^{\frac{11}{6}}}=\bigl\|r^{-\frac{1}{11}}\ut_0\bigr\|_{L^{\frac{11}{6}}(\Omega)}$,
then applying  \eqref{cor3.1} with $q_1=q,~q_2=\frac{11}{6},$  $\delta=1-\e-\frac1q$ and $\gamma=\frac{1}{11}$
gives
\begin{align*}
\|V^\e(t)\|_{L^q}&\leq\frac{C}{t^{\frac{3(p-1)}{2q}}}\cdot
\bigl(\|r^{-\frac{7}{11}}\ut_0\|_{L^{\frac{11}{6}}}+
L_{\frac{20}{17},2}(T)N_{\frac{20}{13},20}(T)\bigr)\\
&\leq\frac{C}{t^{\frac{3(p-1)}{2q}}}\cdot
\Bigl(\|U_0\|_{L^{\f32}}^{\f9{11}}\|r\ut_0\|_{L^\infty}^{\f2{11}}+
L_{\frac{20}{17},2}^2(T)+N^2_{\frac{20}{13},20}(T)\Bigr),\quad \forall t\in ]0, T],
\end{align*}
where we have used H\"{o}lder's inequality in the last step.
\end{proof}

\begin{proof}[Proof of Theorem \ref{thmmain}]
Due to $U_0=\frac{\ut_0}{r}\in L^{\frac32},~r\ut_0\in  L^\infty$, and
$$\|r^\kappa\ut_0\|_{L^{\frac{3}{1-\kappa}}}\leq\|U_0\|_{L^{\frac32}}^{\frac{1-\kappa}2}
\|r\ut_0\|_{L^\infty}^{\frac{1+\kappa}2},\ \forall \kappa\in]-1,1[,$$
we deduce  that both $\|\ut_0\|_{L^2(\Omega)}=\|r^{-\frac12}\ut_0\|_{L^{2}}$
and $\|r^{-\frac{3}{10}}\ut_0\|_{L^{\frac{20}{13}}(\Omega)}=\|r^{-\frac{19}{20}}\ut_0\|_{L^{\frac{20}{13}}}$
are sufficiently small as long as  $\|r\ut_0\|_{L^\infty}$ is small enough.
Then by Theorem \ref{thmlocal}, the equation \eqref{otut} has a unique mild solution
\begin{align*}
\ot\in C\bigl([0,T];&L^1(\Omega)\bigr)\bigcap C\bigl(]0,T];L^\infty(\Omega)\bigr),\
\ut\in C\bigl([0,T];L^2(\Omega)\bigr)\bigcap C\bigl(]0,T];L^\infty(\Omega)\bigr)\\
&\with  r^{-\frac{3}{10}}{\ut}\in C\bigl([0,T];L^{\frac{20}{13}}(\Omega)\bigr)\bigcap C\bigl(]0,T];L^\infty(\Omega)\bigr),
\end{align*}
and the lifespan $T>0$ depends only on $\omega_0^\theta$. We denote $t_0\eqdefa \frac T2$.
In the following, we will always abbreviate $L_p(T)$ as $L_p$, similar abbreviations for the remaining ones in
 \eqref{LMN1},~\eqref{LMN2}.

If  $r\ut(t_0)\in L^{A}\bigcap L^\infty$ and if  there exists some $p_0\in \left]1,\min\bigl(1+\frac{1}{10A},\frac{21}{20}\bigr)\right[$,
it follows from  Proposition \ref{nearglobal} that
for
$\e_0\eqdefa \frac{-9p_0^2+21p_0-4}{24p_0-2}$ and $q_0\eqdefa \frac{2(12p_0-1)}{3(p+3)},$  if there  holds
\begin{equation}\label{5.4}
(2M_1)^{\frac{3(p_0+2)}{p_0(3p_0+11)}}
\|r\ut(t_0)\|_{L^{\alpha(p_0)}}^{\f{10(12p_0-1)}{3p_0(p_0+3)(3p_0+11)}}
+(2M_1)^{\frac{p_0-1}{4p_0}}
\|r\ut(t_0)\|_{L^{\frac{2}{3(p_0-1)^2}}}^{\f1{6p_0}}\leq c_0\cdot(p_0-1),
\end{equation}
where $$M_1\eqdefa \|V^{\e_0}(t_0)\|_{L^{q_0}}^{q_0}+\|\eta(t_0)\|_{L^{p_0}}^{p_0} \andf
\alpha(p_0)\eqdefa \frac{10(12p_0-1)}{9(p_0-1)(p_0+2)(p_0+3)}.
$$
Then the system \eqref{otut} has a global solution. Moreover, in view of \eqref{nearglobal3} and Lemma \ref{lem5.1},
for all $t\in [t_0,\infty)$, there holds
\begin{equation}\label{5.5}
\|\eta(t)\|_{L^{p_0}}^{p_0}
+\frac{p_0-1}{2}\bigl\|\nabla|\eta|^{\frac {p_0}2}\bigr\|_{L^2([t_0,t)\times\R^3)}^2
\leq\frac{C}{t_0^{\frac{3(p_0-1)}{2}}}
\bigl(\|r\ut_0\|_{L^\infty}^{q_0}+L_{1,\frac{20}{3}}^{5p_0}+M_2^{3p_0}
+N^{5p_0}_{\frac{20}{13},20}\bigr).
\end{equation}
By the choice of $p_0$, $\alpha(p_0)>10A$. Then we deduce from Lemma \ref{lem2.1.2}
and Lemma \ref{lem5.1}  that
the smallness condition \eqref{5.4} holds provided that
\begin{equation}\begin{split}\label{5.6}
\|&r\ut_0\|_{L^\infty}\leq \frac{c_0}{L_{1,\frac{20}{3}}^6+M_2^4
+N^6_{\frac{20}{13},20}}
\min\Bigl\{\Bigl((p_0-1)\bigl(t_0^{\frac32}\|r\ut_0\|_{L^A}^{-A}\bigr)^{\frac{(p_0-1)^2}{4p_0}}\Bigr)
^{\frac{12p_0}{2-3A(p_0-1)^2}},\\
&\qquad\qquad\qquad\qquad\Bigl((p_0-1)
\bigl(t_0^{\frac32}\|r\ut_0\|_{L^A}^{-A}\bigr)^{\frac{3(p_0-1)(p_0+2)}{p_0(3p_0+11)}}\Bigr)
^{\f{3p_0(p_0+3)(3p_0+11)}{10(12p_0-1)-9A(p_0-1)(p_0+2)(p_0+3)}} \Bigr\}.
\end{split}\end{equation}

In the following, we consider estimates in critical spaces.
By a similar derivation as Lemma \ref{lem3.1}, that for $q_{1,1}, q_{2,1}$ satisfying
 $\frac1{q_{1,1}}+\frac1{q_{2,1}}=1$ and
$q_{1,1}>\frac{3p_0}{3-p_0}$, there holds
\begin{equation}\label{5.7}\begin{split}
\frac{6}{11}\frac{d}{dt}\|W(t)\|_{L^{\frac{11}{6}}}^{\frac{11}{6}}+&
\frac{120}{121}\bigl\|\nabla|W|^{\frac{11}{12}}\bigr\|_{L^2}^2+\frac{72}{121}\int_{\R^3}\frac{|W|^{\frac{11}{6}}}{r^2}\,dx
\lesssim\int_{\R^3}\bigl|\f{u^r}r\bigr|\cdot|W|^{\frac{11}{6}}\,dx\\
&\qquad\qquad\lesssim \|\eta\|_{L^{p_0}}^{\frac{p_0-1}{2}+\frac{3p_0}{2q_{1,1}}}
\|\na|\eta|^{\f{p_0}2}\|_{L^{2}}^{2\left(\frac{3-p_0}{2p_0}-\frac{3}{2q_{1,1}}\right)}\||W|^{\frac{11}{6}}\|_{L^{q_{2,1}}}\\
\end{split}\end{equation}
Take $q_{2,1}=\vartheta_1+\s_1+\f{2\s_1}{3},$ with $\vartheta_1>0,~0<\s_1<1$
to be determined later, then we have
\begin{equation*}
\begin{split}
\||W|^{\frac{11}{6}}&\|_{L^{q_{2,1}}}=
\Bigl(\int_{\R^3}|W|^{\frac{11}{6}\vartheta_1}\Bigl(\f{|W|^{\frac{11}{6}}}{r^2}\Bigr)^{\s_1}|r^{\frac{18}{11}}W|^{\f{11\s_1}{9}}\,dx\Bigr)^{\f1{q_{2,1}}}\\
\lesssim& \|r\ut\|_{L^{\f{11\s_1 p_0}{12(p_0-1)}}}^{\f{11\s_1}{9q_{2,1}}}
\||W|^{\f {11}{12}}\|_{L^{\f{6p_0\vartheta_1}{4-p_0-3p_0\sigma_1}}}^{\f{2\vartheta_1}{q_{2,1}}}
\Bigl(\int_{\R^3}\f{|W|^{\frac{11}{6}}}{r^2}\,dx\Bigr)^{\f{\s_1}{q_{2,1}}},\\
\lesssim& \|r\ut\|_{L^{\f{11\s_1 p_0}{12(p_0-1)}}}^{\f{11\s_1}{9q_{2,1}}}
\||W|^{\f{11}{12}}\|_{L^2}^{\f{-\vartheta_1-3\s_1}{q_{2,1}}+\f{4-p_0}{p_0q_{2,1}}}
\|\na|W|^{\f{11}{12}}\|_{L^2}^{\f{3(\vartheta_1+\s_1)}{q_{2,1}}-\f{4-p_0}{p_0q_{2,1}}}
\Bigl(\int_{\R^3}\f{|W|^{\frac{11}{6}}}{r^2}\,dx\Bigr)^{\f{\s_1}{q_{2,1}}},
\end{split}
\end{equation*}
where in the last step, we used Galiardo-Nirenberg  inequality provided that
\begin{equation}\label{5.8}
\f{6p_0\vartheta_1}{4-p_0-3p_0\sigma_1}\in[2,6],
\end{equation}
which will be verified later. Then we get, by applying Young's inequality, that
\begin{equation}\label{5.10}
\begin{split}
\int_{\R^3}\bigl|\frac{u^r}{r}\bigr||W|^{\frac{11}{6}} dx \lesssim &\|r\ut\|_{L^{\f{11\s_1 p_0}{12(p_0-1)}}}^{\f{11\s_1}{9q_{2,1}}}
\||W|^{\f{11}{12}}\|_{L^2}^{\f{-\vartheta_1-3\s_1}{q_{2,1}}+\f{4-p_0}{p_0q_{2,1}}}
\|\eta\|_{L^{p_0}}^{\frac{p_0-1}{2}+\frac{3p_0}{2q_{1,1}}}\\
&\times
 \Bigl(\|\na|\eta|^{\f{p_0}2}\|_{L^{2}}^{2}+
\|\na|W|^{\f{11}{12}}\|_{L^2}^{2}+
\int_{\R^3}\f{|W|^{\frac{11}{6}}}{r^2}\,dx\Bigr),
\end{split}
\end{equation}
provided that
\beno
\f{3-p_0}{2p_0}-\f3{2q_{1,1}}+\f{3(\vartheta_1+\s_1)+1}{2q_{2,1}}-\f{2}{p_0q_{2,1}}+\f{\s_1}{q_{2,1}}=1,
\eeno
which in particular holds by taking
\begin{equation}\label{index0}
\sigma_1=\frac{2}{5},\ \vartheta_1=\frac23,\ \mbox{and}\ q_{2,1}=\vartheta_1+\s_1+\f{2\s_1}{3}=\f43.
\end{equation}
So that \eqref{5.8} holds
and $q_{1,1}=4>\f{3p_0}{3-p_0}$.
Hence all the above calculations go through.

By inserting the Estimate \eqref{5.10} into \eqref{5.7}, with the indices given by \eqref{index0},
we obtain
\begin{equation}\label{5.12}\begin{split}
\frac{6}{11}\frac{d}{dt}\|W(t)\|_{L^{\frac{11}{6}}}^{\frac{11}{6}}+&
\frac{120}{121}\bigl\|\nabla|W|^{\frac{11}{12}}\bigr\|_{L^2}^2+\frac{72}{121}\int_{\R^3}\frac{|W|^{\frac{11}{6}}}{r^2}\,dx\\
\lesssim &\|r\ut\|_{L^{\f{11 p_0}{30(p_0-1)}}}^{\f{11}{30}}
\||W|^{\f{11}{12}}\|_{L^2}^{\f{3}{p_0}-\f{43}{20}}
\|\eta\|_{L^{p_0}}^{\frac{7p_0}{8}-\frac{1}{2}}\\
&\times
 \Bigl(\|\na|\eta|^{\f{p_0}2}\|_{L^{2}}^{2}+
\|\na|W|^{\f{11}{12}}\|_{L^2}^{2}+
\int_{\R^3}\f{|W|^{\frac{11}{6}}}{r^2}\,dx\Bigr).
\end{split}\end{equation}

Next,
applying $L^1$ energy estimate for the $\eta$ equation in \eqref{etaV} yields
\begin{equation}\label{5.13}
\frac{d}{dt}\|\eta(t)\|_{L^1}+\int_{\R^3}(-\Delta\eta)\cdot\rm{sgn}\ \eta\,dx
+2\int_{-\infty}^{+\infty}|\eta|\big|_{r=0} dz
\lesssim \bigl\|\pa_z|W|^{\frac {11}{12}}\bigr\|_{L^2}
\Bigl\|\frac{|W|^{\frac{13}{12}}}{r^{\frac{8}{11}}}\Bigr\|_{L^2}.
\end{equation}
Noting that $
\frac{|W|^{\frac{13}{12}}}{r^{\frac{8}{11}}}=\bigl|\frac{|W|^{\frac{11}{6}}}{r^2}\bigr|^{\f12}\cdot
\bigl|r^{\frac{18}{11}}W\bigr|^{\frac16}$,
so we have
\begin{equation}\label{5.15}
\Bigl\|\frac{|W|^{\frac{13}{12}}}{r^{\frac{8}{11}}}\Bigr\|_{L^2}
\leq \|r\ut\|_{L^\infty}^{\frac16}\cdot \Bigl\|\frac{|W|^{\frac{11}{6}}}{r^2}\Bigr\|_{L^1}^{\f12}.
\end{equation}
Substituting \eqref{5.15} into \eqref{5.13}, and using  the fact that
$\int_{\R^3}(-\Delta\eta)\cdot\rm{sgn}\ \eta\,dx\leq 0$, we achieve
\begin{equation}\begin{split}\label{5.16}
\frac{d}{dt}\|\eta(t)\|_{L^1}\lesssim&
\|r\ut\|_{L^\infty}^{\frac16} \bigl\|\pa_z|W|^{\frac {11}{12}}\bigr\|_{L^2} \Bigl\|\frac{|W|^{\frac{11}{6}}}{r^2}\Bigr\|_{L^1}^{\f12}\\
\lesssim& \|r\ut\|_{L^\infty}^{\frac16}
\Bigl(\bigl\|\pa_z|W|^{\frac {11}{12}}\bigr\|_{L^2}^2+\Bigl\|\frac{|W|^{\frac{11}{6}}}{r^2}\Bigr\|_{L^1}\Bigr).
\end{split}\end{equation}
Meanwhile, by taking $p=p_0$ in \eqref{3.20}, we have
\begin{equation}\begin{split}\label{5.17}
&\f{d}{dt}\bigl(\|V^{\e_0}(t)\|_{L^{q_0}}^{q_0}+\|\eta(t)\|_{L^{p_0}}^{p_0}\bigr)
+\frac{4(p_0-1)}{p_0}\|\nabla|\eta|^{\frac {p_0}2}\|_{L^2}^2\\
&\qquad\qquad+\frac{4(q_0-1)}{q_0}\|\nabla|V^{\e_0}|^{\frac {q_0}2}\|_{L^2}^2
+\frac{3p_0(-9p_0^2+21p_0-4)}{24p_0-2}\Bigl\|\frac{|V^{\e_0}|^{q_0}}{r^2}\Bigr\|_{L^1}\\
&\lesssim
\Bigl(\|r\ut_0\|_{L^{\alpha(p_0)}}^{\f{10(12p_0-1)}{3p_0(p_0+3)(3p_0+11)}}
\|V^{\e_0}\|_{L^2}^{q_0\cdot\f{23-17p_0-3p_0^2}{2p_0(3p_0+11)}}
\|\eta\|_{L^p_0}^{\frac{3p_0^2+23p_0-11}{2(3p_0+11)}}
+\|r\ut_0\|_{L^{\frac{2}{3(p_0-1)^2}}}^{\f1{6p_0}}
\|\eta\|^{\frac{p_0-1}4}_{L^{p_0}}\Bigr)\\
&\qquad\times \Bigl(\|\nabla|\eta|^{\frac {p_0}2}\|_{L^2}^2+\|\nabla|V^{\e_0}|^{\frac {q_0}2}\|_{L^2}^2
+\Bigl\|\frac{|V^{\e_0}|^{q_0}}{r^2}\Bigr\|_{L^1}\Bigr).
\end{split}\end{equation}
Summarizing the estimates \eqref{5.12}, \eqref{5.16} and \eqref{5.17} gives rise to
\begin{equation}\begin{split}\label{5.18}
&\f{d}{dt}\bigl(\|\eta(t)\|_{L^1}+\|W(t)\|_{L^{\frac{11}{6}}}^{\frac{11}{6}}
+\|V^{\e_0}(t)\|_{L^{q_0}}^{q_0}+\|\eta(t)\|_{L^{p_0}}^{p_0}\bigr)
+(p_0-1)\|\nabla|\eta|^{\frac {p_0}2}\|_{L^2}^2\\
&\qquad\qquad+\|\nabla|V^{\e_0}|^{\frac {q_0}2}\|_{L^2}^2
+\Bigl\|\frac{|V^{\e_0}|^{q_0}}{r^2}\Bigr\|_{L^1}
+\bigl\|\nabla|W|^{\frac{11}{12}}\bigr\|_{L^2}^2+\int_{\R^3}\frac{|W|^{\frac{11}{6}}}{r^2}\,dx\\
&\lesssim
\Bigl(\|r\ut_0\|_{L^{\alpha(p_0)}}^{\f{10(12p_0-1)}{3p_0(p_0+3)(3p_0+11)}}
\|V^{\e_0}\|_{L^2}^{q_0\cdot\f{23-17p_0-3p_0^2}{2p_0(3p_0+11)}}
\|\eta\|_{L^{p_0}}^{\frac{3p_0^2+23p_0-11}{2(3p_0+11)}}
+\|r\ut_0\|_{L^{\frac{2}{3(p_0-1)^2}}}^{\f1{6p_0}}
\|\eta\|^{\frac{p_0-1}4}_{L^{p_0}}\Bigr)\\
&\quad\times
\Bigl(\|\nabla|\eta|^{\frac {p_0}2}\|_{L^2}^2+\|\nabla|V^{\e_0}|^{\frac {q_0}2}\|_{L^2}^2
+\Bigl\|\frac{|V^{\e_0}|^{q_0}}{r^2}\Bigr\|_{L^1}\Bigr)\\
&\quad +\|r\ut_0\|_{L^\infty}^{\frac16}
\Bigl(\bigl\|\pa_z|W|^{\frac {11}{12}}\bigr\|_{L^2}^2+\Bigl\|\frac{|W|^{\frac{11}{6}}}{r^2}\Bigr\|_{L^1}\Bigr)
+\|r\ut_0\|_{L^{\f{11 p_0}{30(p_0-1)}}}^{\f{11}{30}}
\||W|^{\f{11}{12}}\|_{L^2}^{\f{3}{p_0}-\f{43}{20}}
\|\eta\|_{L^{p_0}}^{\frac{7p_0}{8}-\frac{1}{2}}\\
&\qquad\qquad\qquad\qquad\qquad\qquad\qquad
 \times\Bigl(\|\na|\eta|^{\f{p_0}2}\|_{L^{2}}^{2}+
\|\na|W|^{\f{11}{12}}\|_{L^2}^{2}+
\int_{\R^3}\f{|W|^{\frac{11}{6}}}{r^2}\,dx\Bigr).
\end{split}\end{equation}
Then under the smallness conditions \eqref{5.6}, and
\begin{equation}\label{5.19}
\|r\ut_0\|_{L^\infty}^{\frac16}\leq c_0,\quad
\|r\ut_0\|_{L^{\f{11 p_0}{30(p_0-1)}}}^{\f{11}{30}}
\||W|^{\f{11}{12}}(t_0)\|_{L^2}^{\f{3}{p_0}-\f{43}{20}}
\|\eta(t_0)\|_{L^{p_0}}^{\frac{7p_0}{8}-\frac{1}{2}}\leq c_0(p_0-1)
\end{equation}
 we get, by a standard continued argument, as we did in the last step of the proof of Proposition \ref{nearglobal}, that
\begin{equation}\label{5.20}\begin{split}
&\|\eta(t)\|_{L^1}+\|W(t)\|_{L^{\frac{11}{6}}}^{\frac{11}{6}}+\|V^{\e_0}(t)\|_{L^{q_0}}^{q_0}+\|\eta(t)\|_{L^{p_0}}^{p_0}\\
&\qquad +(p_0-1)\|\nabla|\eta|^{\frac {p_0}2}\|_{L^2}^2
+\int_{t_0}^t \int_{\R^3}\f{|W(t')|^{\frac{11}{6}}}{r^2}\,dxdt'\\
&\leq 2\bigl(\|\eta(t_0)\|_{L^1}+\|W(t_0)\|_{L^{\frac{11}{6}}}^{\frac{11}{6}}
+\|V^{\e_0}(t_0)\|_{L^{q_0}}^{q_0}+\|\eta(t_0)\|_{L^{p_0}}^{p_0}\bigr)\\
&\leq \frac{C}{t^{\frac{3(p_0-1)}{2}}}
\bigl(\|r\ut_0\|_{L^\infty}^{\frac{11}{6}}+L_{1,\frac{20}{3}}^{5p_0}+M_2^{3p_0}
+N^{5p_0}_{\frac{20}{13},20}\bigr),
\quad \forall\ t_0\leq t<\infty.
\end{split}\end{equation}
Recalling that $1<p_0<\min\bigl(1+\frac{1}{10A},\frac{21}{20}\bigr)$,
we have $\f{11 p_0}{30(p_0-1)}>\frac{11}{3}A$. Then it follows from Lemmas \ref{lem2.1.2} and  \ref{lem5.1}
that the condition \eqref{5.19} is verified  provided that
\begin{equation}\label{5.21}
\|r\ut_0\|_{L^\infty}\leq \frac{c_0}{L_{1,\frac{20}{3}}^6+M_2^2
+N^6_{\frac{20}{13},20}}
\Bigl((p_0-1)\bigl(t_0^{\frac{21}{16}p_0-\frac34}\|r\ut_0\|_{L^A}^{-A}\bigr)^{\frac{(p_0-1)}{p_0}}\Bigr)
^{\frac{240p_0}{300-127p_0-240A(p_0-1)}}.
\end{equation}

Finally we derive the $L^{\frac32}$ estimate for $U.$ Indeed
along the same line of the derivation of $\|W(t)\|_{L^{\f{11}6}},$
and using the indices given by \eqref{index0}, we infer
\begin{equation}\label{5.22}\begin{split}
\frac{d}{dt}\|U(t)\|_{L^{\frac32}}^{\frac32}+
\bigl\|\nabla|U|^{\frac34}\bigr\|_{L^2}^2
\lesssim&\int_{\R^3}\bigl|\f{u^r}r\bigr|\cdot|U|^{\frac32}\,dx\\
\lesssim& \|\eta\|_{L^{p_0}}^{\frac{7p_0}{8}-\frac12}
\bigl\|\na|\eta|^{\f{p_0}2}\bigr\|_{L^{2}}^{2\left(\frac{3}{2p_0}-\frac78\right)}\bigl\||U|^{\frac32}\bigr\|_{L^{\frac43}}.
\end{split}\end{equation}
whereas by applying H\"{o}lder's inequality, one has
\begin{equation}\label{5.23}
\begin{split}
\bigl\||U|^{\frac32}\bigr\|_{L^{\frac43}}=&
\Bigl(\int_{\R^3}|U|\Bigl(\f{|W|^{\frac{11}{6}}}{r^2}\Bigr)^{\frac25}
|r^2U|^{\f{4}{15}}\,dx\Bigr)^{\f34}\\
\lesssim& \|r\ut\|_{L^{\f{p_0}{5(p_0-1)}}}^{\f15}
\bigl\||U|^{\f 34}\bigr\|_{L^{\f{20p_0}{20-11p_0}}}
\Bigl(\int_{\R^3}\f{|W|^{\frac{11}{6}}}{r^2}\,dx\Bigr)^{\f{3}{10}},\\
\lesssim& \|r\ut\|_{L^{\f{p_0}{5(p_0-1)}}}^{\f15}
\bigl\||U|^{\f 34}\bigr\|_{L^2}^{\frac{3}{p_0}-\frac{43}{20}}
\bigl\|\nabla|U|^{\f 34}\bigr\|_{L^2}^{\frac{63}{20}-\frac{3}{p_0}}
\Bigl(\int_{\R^3}\f{|W|^{\frac{11}{6}}}{r^2}\,dx\Bigr)^{\f{3}{10}}.
\end{split}
\end{equation}
Substituting \eqref{5.23} into \eqref{5.22}, then using of Young's inequality gives rise to
\begin{equation*}\label{5.24}\begin{split}
\frac{d}{dt}\|U(t)\|_{L^{\frac32}}^{\frac32}+
\bigl\|\nabla|U|^{\frac34}\bigr\|_{L^2}^2
\lesssim& \|\eta\|_{L^{p_0}}^{\frac{7p_0}{8}-\frac12}
\|r\ut_0\|_{L^{\f{p_0}{5(p_0-1)}}}^{\f15}
\|U\|_{L^{\frac32}}^{\frac 32\bigl(\frac{3}{2p_0}-\frac{43}{40}\bigr)}\\
&\times\Bigl(\bigl\|\na|\eta|^{\f{p_0}2}\bigr\|_{L^{2}}^2
+\bigl\|\nabla|U|^{\f 34}\bigr\|_{L^2}^2+\int_{\R^3}\f{|W|^{\frac{11}{6}}}{r^2}\,dx\Bigr),
\end{split}\end{equation*} from which and \eqref{5.20}, we deduce by a
 standard continued argument that
\begin{equation}\begin{split}\label{5.25}
\|U(t)&\|_{L^{\frac32}}^{\frac32}+
\int_{t_0}^t \bigl\|\nabla|U|^{\frac34}(t')\bigr\|_{L^2}^2\,dt'\\
&\leq2\|U_0\|_{L^{\frac32}}^{\frac32}
+\frac{C}{t^{\frac{3(p_0-1)}{2}}}
\bigl(\|r\ut_0\|_{L^\infty}^{\frac{11}{6}}+L_{1,\frac{20}{3}}^{5p_0}+M_2^{3p_0}
+N^{5p_0}_{\frac{20}{13},20}\bigr),
\quad \forall\ t_0\leq t<\infty
\end{split}\end{equation}
provided that the smallness conditions \eqref{5.6},~\eqref{5.19},~\eqref{5.21} and
\begin{equation}\label{5.26}
\|\eta(t_0)\|_{L^{p_0}}^{\frac{7p_0}{8}-\frac12}
\|r\ut_0\|_{L^{\f{p_0}{5(p_0-1)}}}^{\f15}
\|U(t_0)\|_{L^{\frac32}}^{\frac 32\bigl(\frac{3}{2p_0}-\frac{43}{40}\bigr)}
\leq c_0(p_0-1),
\end{equation}
hold. Yet it follows  Lemma \ref{lem5.1} that \eqref{5.26} can be satisfied as long as
\begin{equation}\label{5.27}
\|r\ut_0\|_{L^\infty}\leq \frac{c_0}{L_{1,\frac{20}{3}}^{20}+M_2^{12}
+N^{20}_{\frac{20}{13},20}}
\Bigl((p_0-1)\cdot\bigl(t_0^{\frac{21}{16}p_0-\frac34}\|r\ut_0\|_{L^A}^{-A}\bigr)^{\frac{(p_0-1)}{p_0}}\Bigr)
^{\frac{5p_0}{p_0-5A(p_0-1)}}.
\end{equation}

Therefore under the smallness conditions \eqref{5.6},~\eqref{5.19},~\eqref{5.21} and  \eqref{5.27}, we get, by summing up \eqref{5.20} and \eqref{5.25}
that
for any $0\leq t<\infty $,
\begin{equation}\label{5.28}
\|\eta(t)\|_{L^1}+\|U(t)\|_{L^{\frac32}}^{\frac32}
\leq 2\|U_0\|_{L^{\frac32}}^{\frac32}
+\frac{C}{t^{\frac{3(p_0-1)}{2}}}
\bigl(\|r\ut_0\|_{L^\infty}^{\frac{11}{6}}+L_{1,\frac{20}{3}}^{5p_0}+M_2^{3p_0}
+N^{5p_0}_{\frac{20}{13},20}\bigr).
\end{equation}
This completes the proof of Theorem \ref{thmmain}
\end{proof}
\smallskip
\noindent {\bf Acknowledgments.}  P. Zhang is partially
supported by NSF of China under Grant 11371347 and innovation grant from National
Center for Mathematics and Interdisciplinary Sciences.

\medskip

\end{document}